\numberwithin{equation}{section}
\theoremstyle{plain}
\newtheorem{lemma}{Lemma}[section]
\newtheorem{theorem}[lemma]{Theorem}
\newtheorem{prop}[lemma]{Proposition}
\newtheorem*{claim}{Claim}
\theoremstyle{definition}
\newtheorem{defn}[lemma]{Definition}
\theoremstyle{remark}
\newtheorem{remark}[lemma]{Remark}
\newtheorem*{remark*}{Remark}
\def\@set@authors@addresses{\par
  \@setauthors%
  \begingroup
  \centering
  \def\address##1##2{\par\addvspace\bigskipamount%
    {\itshape\ignorespaces##2}%
  }%
  \def\email##1##2{%
    \@ifnotempty{##2}{, \ignorespaces{\ttfamily##2}}}%
  \def\curraddr##1##2{}%
  \def\urladdr##1##2{}%
  \addresses
  \endgroup
\par }
\renewcommand{\email}[2][]{%
  \ifx\emails\@empty\relax\else{\g@addto@macro\emails{,\space}}\fi%
  \@ifnotempty{#1}{\g@addto@macro\emails{\textrm{#1:}\space}}%
  \g@addto@macro\emails{#2}%
}
\title[Local ellipsoid characterization and isometric subspaces problem]{Local Blaschke--Kakutani ellipsoid characterization and Banach's isometric subspaces problem}
\author{Sergei Ivanov$^{a, b}$}
\email[S.\ Ivanov]{svivanov@pdmi.ras.ru}
\author{Daniil Mamaev$^{a, c}$}
\email[D.\ Mamaev (corresponding author)]{daniil.mamaev.21@ucl.ac.uk}
\author{Anya Nordskova$^{a, d}$}
\email[A.\ Nordskova]{anya.nordskova@uhasselt.be}
\address{$^a$St.\ Petersburg Department of Steklov Mathematical Institute,
Fontanka 27, St.\ Petersburg 191023, Russia}
\address{$^b$St.\ Petersburg State University,
Universitetskaya nab.\ 7-9,
St.\ Petersburg 199034, Russia}
\address{$^c$University College London, Gower Street, London WC1E 6BT, United Kingdom}
\address{$^d$Universiteit Hasselt, Agoralaan, gebouw D, 3590 Diepenbeek, Belgium}
\subjclass[2010]{46C15, 52A21}
\keywords{Ellipsoid characterization, convex body, cross-section}
\newcommand{\R}{\mathbb R}
\newcommand{\U}{\mathcal U}
\renewcommand{\P}{\mathbb P}
\newcommand{\Gr}{\mathrm{Gr}}
\newcommand{\id}{\mathrm{id}}
\newcommand{\res}[2]{\left. #1 \right|_{#2}}
\DeclareMathOperator{\tr}{Trace}
\DeclareMathOperator{\linspan}{LinSpan}
\DeclareMathOperator{\Int}{Int}
\DeclareMathOperator{\im}{im}
\DeclareMathOperator{\pr}{pr}
\DeclareMathOperator{\Hom}{Hom}
\newcommand{\la}{\lambda}
\newcommand{\pd}{\partial}
\newcommand{\co}{\colon}
\newcommand{\ep}{\varepsilon}
\begin{document}

\begin{abstract}
We prove the following local version of Blaschke--Kakutani's
characterization of ellipsoids:
Let $V$ be a finite-dimensional real vector space,
$B\subset V$ a convex body with 0 in its interior,
and ${2\le k<\dim V}$ an integer.
Suppose that the body
$B$ is contained in a cylinder based on the cross-section $B \cap X$
for every $k$-plane $X$ from a connected open set of linear $k$-planes in $V$.
Then in the region of $V$ swept by these $k$-planes 
$B$ coincides with either an ellipsoid, or a cylinder over an ellipsoid, or a cylinder over a $k$-dimensional base.

For $k=2$ and $k=3$ we obtain as a corollary
a local solution to Banach's isometric subspaces problem:
If all cross-sections of $B$ by $k$-planes
from a connected open set are linearly equivalent,
then the same conclusion as above holds.
\end{abstract}

\maketitle

\section{Introduction}

A classical result by Kakutani
(\cite{Kakutani39}, see also \cite{Gruber}*{Theorem 12.5})
characterizes Euclidean spaces among finite-dimensional
normed spaces as follows:

\medskip{\emph{
Let $V=(V,\|\cdot\|)$ be an $n$-dimensional normed space and $2\le k<n$.
Suppose that for every $k$-dimensional linear subspace $X$ of $V$,
there exists a linear projector $P_X\colon V\to X$ onto $X$
with unit operator norm, i.e.\ such that $\|P_X(v)\|\le \|v\|$
for all $v\in V$. Then $\|\cdot\|$ is a Euclidean
(i.e., an inner product) norm.
}}
\medskip

This fact is also known as the Blaschke--Kakutani characterization;
it can be seen as the dual form of Blaschke's characterization
of ellipsoids via planarity of shadow boundaries.
The projector property always holds for 1-dimensional subspaces
(by the Hahn--Banach theorem), which explains the condition $k\ge 2$.

In this paper we characterize norms for which the same assumption
is satisfied locally, that is for subspaces $X$ ranging over
an open subset of the respective Grassmannian.
The answer is that the local structure of the norm near these subspaces
is either Euclidean or cylindrical,
see Theorem~\ref{t: local Blaschke--Kakutani} below for the precise formulation.

Our main motivation and application is a local version
of a low-dimensional solution of Banach's problem
about normed spaces where all subspaces
of a fixed dimension are isometric.
Using the approach from \cite{IMN23}
together with Theorem~\ref{t: local Blaschke--Kakutani}
we show that the answer
to the local version of Banach's problem for $k=2,3$ is the same
as in Theorem~\ref{t: local Blaschke--Kakutani},
see Theorem~\ref{t: local Banach}.

In the case when the norm is smooth and strictly convex,
the local Blaschke--Kakutani characterization was
obtained by Calvert \cite{Calvert87}.
The cylindrical case does not appear in \cite{Calvert87} because of
the strict convexity assumption.
The local Banach's isometric subspaces problem for $k=2$
and smooth strictly convex norms was solved in~\cite{I18}
as a part of the proof of a Finsler geometry result.
Note that the Blaschke--Kakutani characterization easily reduces
to the case $k=2$ while Banach's problem does not.

\subsection*{Definitions and formulations}

We state our results in convex geometry terms
rather than in terms of norms.
As usual, a norm on a vector space $V$ is represented by its unit ball $B$,
which is a convex body in~$V$.
Since a part of our motivation comes from Finsler geometry,
we do not assume that $B$ is symmetric
and hence consider norms that are not necessarily symmetric
(``Minkowski norms'', see section~\ref{s:prelim}).
The existence of a norm non-increasing projector to
a linear subspace $X\subset V$ is equivalent to the
property that $B$ is contained in a cylinder with base
$B\cap X$, see Definition \ref{d: cylinder} and
Lemma \ref{l: characterisations of cylindricity} below.

By $\Gr_k V$ we denote the Grassmannian of $k$-dimensional
linear subspaces of a vector space $V$.
Two linear subspaces $X,Y\subset V$ are called \emph{complementary}
if $V=X+Y$ and $X\cap Y=0$.
A \textit{convex body} is a compact convex set
with a nonempty interior.

\begin{defn}\label{d: cylinder}
Let $V$ be a real $n$-dimensional vector space
and $1\le k<n$ an integer.
A set $C\subset V$ is called a \emph{$k$-cylinder}
if it can be represented in the form
$$
C=K+Y
$$
where $K$ is a $k$-dimensional convex body
in a linear subspace $X\in\Gr_kV$,
and $Y$ is a linear subspace complementary to~$X$.

The set $K$ is referred to as a \emph{base}
and $Y$ as a \emph{generatrix} of~$C$.
If the value of $k$ is clear from context,
we omit it and call $k$-cylinders simply \emph{cylinders}.
\end{defn}

Note that the generatrix of a cylinder is unique but a base is not.
In fact, if $C$ is a $k$-cylinder then
for every linear subspace $X'\in\Gr_kV$
such that the set $K'=C\cap X'$ is compact, $K'$ is a base of~$C$.
(The compactness of $K'$ is equivalent to $X'\cap Y=0$ where $Y$ is the generatrix).

Our first result is the following theorem.

\begin{theorem} \label{t: local Blaschke--Kakutani}
Let $V$ be a real $n$-dimensional vector space,
$B \subset V$ a convex body containing 0 in its interior,
$2 \le k < n$ an integer, and $\U \subset \Gr_k V$
a nonempty connected open set.

Suppose that for every $X\in\U$
the body $B$ is contained in a $k$-cylinder with base $B\cap X$.
Then there exists a set $B'\subset V$ such that
\begin{equation} \label{e: two bodies coincide in a neighbourhood}
    B\cap X=B'\cap X \quad\text{for all $X\in\U$}
\end{equation}
and at least one of the following holds:
\begin{enumerate}
\item $B'$ is a $k$-cylinder;
\item $B'=\{v \in V \colon Q(v) \le 1\}$ for some
nonnegative definite quadratic form $Q$ on $V$.
\end{enumerate}
\end{theorem}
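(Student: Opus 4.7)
The plan is to recast the hypothesis in the language of linear projectors, establish regularity of the associated family, and conclude by a dichotomy on how this family sits in $V$. The cylinder condition for $X \in \U$ is equivalent to the existence of a linear projector $P_X \colon V \to X$ satisfying $P_X(B) = B \cap X$; the kernel $Y_X := \ker P_X$ is then a complement of $X$, namely the generatrix of the cylinder $(B \cap X) + Y_X$. After verifying that $Y_X$ is uniquely determined by $X$ (a fact encoded in Lemma~\ref{l: characterisations of cylindricity}), continuity of the map $X \mapsto Y_X$ on $\U$ follows from a compactness argument on the set of admissible generatrices.

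With this continuous family in hand, I would examine the common kernel $L := \bigcap_{X \in \V} Y_X$ on a sufficiently small connected open neighborhood $\V \subset \U$ of a chosen $X_0$, and split into cases. If $\dim L = n - k$, all generatrices coincide with a fixed subspace $Y = L$, and the $k$-cylinder $B' := (B \cap X_0) + Y$ already satisfies $B' \cap X = B \cap X$ for every $X \in \V$. If $0 < \dim L < n - k$, I would show that $B$ is locally translation-invariant under $L$ --- each $\ell \in L$ lies in $\ker P_X$ for every $X \in \V$, hence preserves the cross-sections $B \cap X$ --- so the quotient body in $V/L$ satisfies the analogous hypothesis with trivial common kernel, reducing matters to the case $L = 0$ in a smaller ambient dimension.

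The heart of the proof is the case $L = 0$, where I claim that $B$ coincides locally with a set of the form $\{Q \le 1\}$ for a nonnegative definite quadratic form $Q$ on $V$. Here the generatrices $Y_X$ spread genuinely as $X$ varies, and I would exploit the derivative of $X \mapsto P_X$ at $X_0$ --- viewed as an element of $\Hom(T_{X_0} \Gr_k V, \End(V))$ --- together with the invariance $P_X(B) = B \cap X$ to derive algebraic constraints on $\partial B$ that can be integrated to a quadratic form. Once $Q$ is identified on a neighborhood of $X_0$, connectedness of $\U$ propagates the equality $B \cap X = \{Q \le 1\} \cap X$ throughout $\U$ and yields a single global set $B'$ as required. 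The principal obstacle will be the lack of smoothness or strict convexity of $\partial B$: Calvert's differential-geometric approach in \cite{Calvert87} does not apply directly, and I expect it to be circumvented either by working with the one-sided subdifferentials of the support function of $B$, or by a smooth-approximation argument combined with a quantitative rigidity estimate for nearly-cylindrical bodies.
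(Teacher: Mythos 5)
Your proposal takes a genuinely different route from the paper, but it has two substantive gaps that prevent it from going through as stated.

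\textbf{Uniqueness of the generatrix is not ``encoded'' anywhere.} You assert that the complement $Y_X = \ker P_X$ is uniquely determined by $X$ and cite Lemma~\ref{l: characterisations of cylindricity}, but that lemma only gives equivalent conditions for a \emph{fixed} pair $(X,Y)$ to satisfy the cylinder inclusion; it says nothing about uniqueness of $Y$ for a given $X$. Uniqueness is in fact a nontrivial statement that requires the open-set hypothesis: the paper proves it only for $n=3$, $k=2$ (Lemma~\ref{l: unique line}), and the argument there carefully uses a smooth boundary point and a second plane $X'\in\U$ through it. Your entire strategy hinges on the map $X\mapsto Y_X$ being well-defined and continuous on all of $\U$ in every dimension, so you would need to establish uniqueness in full generality. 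It is not obvious that it holds for $k < n-1$; and notably, the paper avoids needing it in higher dimensions by a different dimension-reduction argument (Lemma~\ref{l: dimension reduction}), which works with \emph{any} contracting directions, not the unique ones.

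\textbf{The ellipsoid case is not actually carried out.} In your case $L=0$ you propose to differentiate $X\mapsto P_X$ and ``integrate'' the resulting constraints to a quadratic form, while explicitly acknowledging that $\pd B$ need not be smooth or strictly convex, and that Calvert's argument does not apply. You then wave at two possible repairs (subdifferentials of the support function, or smooth approximation plus a quantitative rigidity estimate), neither of which you supply. This is the heart of the theorem, and it is precisely what the paper solves by a different mechanism: in dimension~3 the generatrix map $\varphi\colon\U\to\Gr_1V$ is shown to preserve collinearity (Lemma~\ref{l: collinearity}, where non-smooth points are handled by density of smooth points and continuity), so the local fundamental theorem of projective geometry (Proposition~\ref{p: collineation is projective}) yields a \emph{linear} map $F\colon V\to V^*$ parametrizing supporting planes, and a short vector-field argument (Lemma~\ref{l: 3D ellipsoid case}) then forces the cross-sections to be ellipses. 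That projective-geometric route completely sidesteps the regularity issue you flag as the principal obstacle. Without an actual argument replacing it, your $L=0$ case is a placeholder, not a proof.

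Two smaller points. Your phrase that ``$B$ is locally translation-invariant under $L$'' is false as stated (a compact body is never invariant under a nonzero translation); the salvageable content is that the quotient map $\pi\colon V\to V/L$ takes $B$ to a convex body and that the cylinder hypothesis descends, which does hold because $L\subset Y_X$ for all $X$, so $\pr^{Y_X}_X$ identifies $\pi(B)\cap\pi(X)$ with $\pi(B\cap X)$. Finally, your dichotomy via $L=\bigcap_{X\in\V}Y_X$ on a small $\V$ still needs a patching step to produce a single global $B'$ over the connected set $\U$ when the local answer varies from ellipsoidal to cylindrical; the paper supplies this via Lemma~\ref{l: relaxed theorem conclusion} and Lemma~\ref{l: propagation of local cylindricity}, which rule out a mixed answer, and your outline does not address it.
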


The case (1) in Theorem \ref{t: local Blaschke--Kakutani}
occurs if all $k$-cylinders from the assumption
have the same generatrix.
In~(2), the set $B'$ may be an ellipsoid or,
if $Q$ is degenerate, a cylinder over
an $m$-dimensional ellipsoid for some $m\ge k$.
The cylindrical and degenerate cases are unavoidable
in the local setting.
In fact, any $B$ satisfying the conclusion of the theorem
satisfies its assumption,
see Lemma \ref{l: local kakutani is if and only if}.

\begin{remark}\label{rem: nondegeneracy}
If $\U$ in Theorem~\ref{t: local Blaschke--Kakutani}
is the entire Grassmannian $\Gr_kV$ then
\eqref{e: two bodies coincide in a neighbourhood}
implies that $B'=B$.
Then, since $B$ is compact, the cylindrical and degenerate cases
are ruled out and the only remaining option is a sublevel set
of a positive definite quadratic form.
Thus Theorem \ref{t: local Blaschke--Kakutani}
implies the original Blaschke--Kakutani characterization
and moreover generalizes it to non-symmetric norms.

We also note that the cylindrical and degenerate cases cannot occur
if $B$ is strictly convex or, more generally,
if the union of the subspaces from $\U$ contains
an extreme point of~$B$.
(Recall that an \emph{extreme point} of $B$
is a point $p\in B$ such that $B\setminus\{p\}$ is convex).
\end{remark}

The proof of Theorem \ref{t: local Blaschke--Kakutani} is given in
sections~\ref{s: proof in 3D}~and~\ref{s: proof induction}
after technical preparations
in sections~\hbox{\ref{s:prelim}--\ref{s: local cylinders}}.
In section~\ref{s: proof in 3D} we handle the case $n=3$
and in section~\ref{s: proof induction}
the proof is finished by induction on dimension.

The ellipsoid and cylinder cases in dimension~3
are separated by whether the correspondence between crossing planes and cylinders containing the body is 1-to-1 or not.
In the latter case we deduce the result via convex geometry arguments,
and in the former case the proof is based on a local version
of the fundamental theorem of projective geometry.
This is similar to proofs in \cite{Calvert87} and \cite{I18}.

\subsection*{Banach's isometric subspaces problem}

In 1932, Banach \cite{Banach32} posed the following problem:

\medskip{\emph{ 
Let $V=(V,\|\cdot\|)$ be a normed vector space
and $2\le k<\dim V$ an integer.
Suppose that all $k$-dimensional linear subspaces of $V$ are isometric.
Is $\|\cdot\|$ necessarily an inner product norm?
}}\medskip

The problem translates into the language of convex geometry as follows:
Consider a convex body $B\subset V$ (the unit ball of a norm) and suppose
that all cross-sections of $B$ by $k$-dimensional linear subspaces
are \textit{linearly equivalent}, that is, for every $X,Y\in\Gr_k V$
there exists a linear map $L\colon X\to Y$ such that $L(B\cap X)=B\cap Y$.
The question is whether such a body is necessarily a centered ellipsoid.

It is answered affirmatively in some dimensions
and remains open in others.
For a long time the only known result was the solution for $k=2$
by Auerbach, Mazur, and Ulam~\cite{AMU35}.
Then Dvoretzky \cite{Dvo59} solved the problem
for infinite-dimensional spaces
and Gromov \cite{Gro67} settled the case of even $k$ and
the case $\dim V\ge k+2$ for all $k$.
Recently the problem was solved for
$k\equiv 1 \bmod 4$ except $k=133$ by 
Bor, Hern\'{a}ndez Lamoneda, Jim\'{e}nez-Desantiago, and Montejano~\cite{BHJM21}
and for $k=3$ by the authors \cite{IMN23}.

The proofs in \cites{AMU35,Gro67,BHJM21} rely on algebraic topology
of Grassmannians to find obstructions to the existence
of certain families of linear equivalences.
In contrast, the proof for $k=3$ in \cite{IMN23} is based on differential geometric
analysis in a neighborhood of a single cross-section.
This suggests that it makes sense to consider
a local version of the problem where the linear equivalence is assumed
only for a small open set of cross-sections.
Such a local result
was obtained in \cite{I18} for $k=2$, $n=3$ and a smooth strictly convex body;
the conclusion is that the respective part of the body coincides with an ellipsoid.

In this paper we extend the results of \cite{I18} and \cite{IMN23}
and solve the local version of Banach's problem for $k=2$ and $k=3$.
Like in the case of Theorem \ref{t: local Blaschke--Kakutani},
the problem admits locally cylindrical solutions
in addition to locally ellipsoidal ones.

\begin{theorem}\label{t: local Banach}
Let $V$ be an $n$-dimensional real vector space, $B\subset V$
a convex body containing 0 in its interior,
$k\in\{2,3\}$,
and $\U \subset \Gr_k V$ a nonempty connected open set.
Suppose that for every $X_1,X_2\subset\U$
the cross-sections $B\cap X_1$ and $B\cap X_2$
are linearly equivalent.

Then the same conclusion as in Theorem \ref{t: local Blaschke--Kakutani}
holds, namely there exists $B'\subset V$ such that
$B\cap X=B'\cap X$ for all $X\in\U$
and $B$ is either a $k$-cylinder or a sublevel set
of a nonnegative definite quadratic form (or both).
\end{theorem}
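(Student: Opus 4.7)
The plan is to reduce Theorem \ref{t: local Banach} to Theorem \ref{t: local Blaschke--Kakutani}. More precisely, I would show that the linear-equivalence assumption implies the cylinder-containment assumption of Theorem \ref{t: local Blaschke--Kakutani}: for every $X \in \U$ the body $B$ is contained in a $k$-cylinder with base $B \cap X$. Granted this reduction, Theorem \ref{t: local Blaschke--Kakutani} produces the required $B'$ and finishes the proof, with the same list of possible outputs (cylinder, sublevel set of a nonnegative quadratic form, or both).

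For $k=2$, I would localize the classical Auerbach--Mazur--Ulam argument following \cite{I18}. The John ellipse of the 2-section $B \cap X$ is uniquely defined and varies continuously with $X$, so the linear equivalences among different 2-sections in $\U$ produce a continuous family of inner products on the planes $X$. Using these canonical inner products one constructs candidate projectors $P_X \colon V \to X$ and verifies that they are norm-nonincreasing; by Lemma \ref{l: characterisations of cylindricity} this is precisely the cylinder-containment condition. The smooth-and-strictly-convex hypothesis of \cite{I18} is no longer needed, because the cylindrical and degenerate outputs are now absorbed into Theorem \ref{t: local Blaschke--Kakutani}.

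For $k=3$, the reduction is essentially the main technical input of \cite{IMN23}. That paper's proof is already local in spirit: it carries out a differential-geometric analysis in an arbitrarily small neighborhood of a single 3-section and extracts, from the linear equivalence of nearby cross-sections, a linear projector onto each cross-section of unit operator norm, equivalently a cylinder containing $B$ with that cross-section as base. I would run this argument on the connected open set $\U$ to verify the hypothesis of Theorem \ref{t: local Blaschke--Kakutani}, and then appeal to that theorem.

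\textbf{Main obstacle.} The substantive work is the $k=3$ case: one has to inspect the argument of \cite{IMN23} and confirm that it nowhere uses the global form of its hypothesis (linear equivalence of \emph{every} cross-section with a fixed one), but only its local content near the chosen section. A related subtlety is that \cite{IMN23} concludes with $B$ being a centered ellipsoid, whereas here the cylindrical and degenerate possibilities must remain on the table; one must therefore truncate the import from \cite{IMN23} at the intermediate step establishing cylinder-containment, and leave the subsequent case analysis entirely to Theorem \ref{t: local Blaschke--Kakutani}. The $k=2$ case should be routine by comparison, modulo a careful continuous choice of John ellipses on $\U$.
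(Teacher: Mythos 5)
Your high-level plan — verify the cylinder-containment hypothesis of Theorem~\ref{t: local Blaschke--Kakutani} and then invoke it — is exactly what the paper does, and your $k=3$ sketch is essentially correct: the paper imports from~\cite{IMN23} a linear map $R\colon X^*\to\Hom(X,X)$ (Proposition~\ref{prop:Rexists}) and a tangency statement (Proposition~\ref{prop:Rtangent}), combines them to show a vector $\nu\notin X$ is tangent to $\pd B$ along $\pd(B\cap X)$, and reads off cylinder-containment via Lemma~\ref{l: characterisations of cylindricity}.

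Your $k=2$ route has a genuine gap, and it also misattributes the method of~\cite{I18}. An inner product on the $2$-plane $X$ (from the John ellipse) does not determine a complementary subspace $Y\subset V$, hence does not by itself produce a candidate projector $P_X\colon V\to X$; the Auerbach--Mazur--Ulam argument built around the John ellipse is a global topological argument on the full Grassmannian and does not localize in the way you suggest. Moreover,~\cite{I18} does not use John ellipses at all: it runs a differential-geometric analysis of the Finsler type, closely parallel to~\cite{IMN23}. The paper handles $k=2$ and $k=3$ uniformly: Proposition~\ref{prop:Rexists} is dimension-agnostic, and for $k=2$ the required tangency step (Proposition~\ref{prop:Rtangent}) is proved freshly in this paper by writing $W(p)=R_{\la_p}(p)$ in coordinates, observing it is a quadratic vector field tangent to $\pd K$, and invoking~\cite{IMN23}*{Lemma 3.4}. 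If you want a route around the IMN23 machinery for $k=2$ you would need something of comparable strength, and the John ellipse idea as stated does not supply it.

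Finally, you do not address $n\ge k+2$. The machinery of Propositions~\ref{prop:Rexists}--\ref{prop:Rtangent} is formulated for $\dim V = k+1$, so the paper first proves the codimension-one case, then for general $n$ reduces to it: by Lemma~\ref{l: relaxed theorem conclusion} it suffices to show each $X\in\U$ is either locally cylindrical or has an ellipsoid section, and that follows by applying the codimension-one case inside each $W\in\Gr_{k+1}(V,X)$ and patching via Lemma~\ref{l: locally cylindrical in subspaces}. This step should be made explicit.
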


The proof of Theorem \ref{t: local Banach} is a combination
of Theorem \ref{t: local Blaschke--Kakutani} and the results of \cite{IMN23}.
In fact, the key propositions in \cite{IMN23} show that
the assumptions of Theorem \ref{t: local Banach} imply
those of Theorem~\ref{t: local Blaschke--Kakutani}.
See section \ref{sec: proof_local_Banach} for details.

\begin{remark} \label{r: local reconstruction from sections}
    A related question is whether a convex body in $\R^n$ is uniquely determined, up to a symmetry or homothety, by the congruence or affine types of its $k$-dimensional cross-sections through the origin.
    For congruence types this question goes back to Nakajima~\cite{N32} and S\"{u}ss~\cite{S32}.
    In general the answer is negative as shown by Zhang~\cite{Z18}.
    However in the symmetric case an application of the spherical Radon transform shows that the answer is affirmative, moreover a symmetric body is uniquely determined by the areas of its cross-sections.
    
    This type of questions can be localized as well, for example, one may ask the following:
    
    \emph{Let $B_1$ and $B_2$ be origin symmetric bodies in $\R^n$, ${2 \le k < n}$, and $\U \subset \Gr_k\R^n$ an open set.
    Suppose that for every $X \in \U$ the sections $B_1 \cap X$ and $B_2 \cap X$ are congruent (or, more generally, are linearly equivalent and have the same area).
    Is it true that $B_1 \cap X = B_2 \cap X$ for all $X \in \U$?}

    Note that Proposition~\ref{p: ellipses only assemble into ellipsoid} implies an affirmative answer to this question when one of the bodies is an ellipsoid. 
    Also see Purnaras--Saroglou~\cite{PS21} for a local problem of a similar flavor.
\end{remark}
\section{Preliminaries}
\label{s:prelim}

\subsection{Notation and conventions}
In this paper, a ``vector space''
always means a finite-dimensional real vector space
and a ``subspace'' means a linear subspace. 
For vector spaces $X$ and $Y$, 
$\Hom(X,Y)$ 
denotes the space of linear maps from $X$ to~$Y$, and 
$X^*=\Hom(X,\R)$ 
the dual space to~$X$.
For a vector space $V$, the Grassmann manifold 
$\Gr_k V$ 
consists of (unoriented) $k$-dimensional subspaces of $V$. 
For 
$X \in \Gr_k V$ 
and  $m\ge k$ we denote by $\Gr_m(V,X)$ the set of
subspaces from $\Gr_m V$ containing~$X$:
\begin{equation} \label{e: Gr_m(V, W)}
    \Gr_m(V, X) := \{W \in \Gr_m V \colon X \subset W\}.
\end{equation}

For a subset $S \subset V$ we denote by $\linspan S$ the smallest linear subspace of $V$ containing $S$. For $v \in V \setminus \{0\}$ the line through $v$ is denoted by $\R v = \{\lambda v \mid \lambda \in \R\}$.

For complementary subspaces $X, Y \subset V$
we denote by $\pr^Y_X$ the projection to $X$ along $Y$:
\begin{equation} \label{e: pr^L_H}
    \pr^Y_X \colon V \to V, \quad v \mapsto \text{ the unique point in } (v + Y) \cap X
\end{equation} 
Note that any linear projector (i.e., idempotent linear map)
$P \in \Hom(V, V)$ is uniquely of the above form, with $X = \im P$ and $Y = \ker P$.

For a convex set $K\subset V$ we denote by $\pd K$
the relative boundary of $K$, that is the boundary in the topology of its affine span. A convex set $B \subset V$ is called a \emph{convex body} if it is compact and has nonempty interior. By an {\em ellipsoid} we mean the unit ball of an inner product norm in a vector space.
In other words, all ellipsoids are assumed to be centered at~$0$.
The same terminology adjustment applies to ellipses in dimension~2.

A {\em Minkowski seminorm} on a vector space $V$ is a
function $\Phi\co V\to\R_+$ which is positively 1-homogeneous
and subadditive (and hence convex).
A {\em Minkowski norm} is a Minkowski seminorm
which is positive on $V\setminus\{0\}$.
The difference from usual norms is that a Minkowski norm
is not assumed symmetric. 
There is a standard bijection between Minkowski
seminorms and convex sets with 0 in the interior.
Namely, to each Minkowski seminorm $\Phi$ one associates its unit ball
$B_\Phi = \{ x\in V : \Phi(x)\le 1\}$, and for every convex
set $B \subset V$ with $0$ in the interior there
is a corresponding Minkowski seminorm
$\Phi^B(x) = \inf\{\lambda > 0 \mid x/\lambda \in B\}$.
Note that $\Phi^B$ is a Minkowski norm if and only if $B$ is compact.

Recall that a \emph{supporting hyperplane} of a convex body $B$
at a point $p\in\pd B$ is an affine hyperplane $H\ni p$ such that
$H\cap\Int B=\varnothing$.
We say that a point $p\in\pd B$ is a \emph{smooth point} of $\pd B$
if $B$ has a unique supporting hyperplane at~$p$.
In this case the hyperplane is also called
the tangent hyperplane of $B$ at~$b$.
If $B$ is a unit ball of a Minkowski norm $\Phi$ than the smoothness
of a point $p\in\pd B$ is equivalent to the property that $\Phi$
is differentiable at~$p$.
Note that a Minkowski norm is differentiable almost everywhere
(since it is a convex function) and hence almost all points
of the boundary of a convex body are smooth points.

\subsection{Assumptions and assertions of Theorem \ref{t: local Blaschke--Kakutani}}

\begin{defn} \label{d: contracting subspace}
Let $\Phi$ be a Minkowski seminorm on a vector space $V$.
A linear subspace $X \subset V$ is called
\emph{$\Phi$-contracting} if there exists a linear projector $P$
from $V$ onto $X$ such that $\Phi(P(v))\le \Phi(v)$ for all $v\in V$.

Recall that every such linear projector $P$
is of the form $\pr_X^Y$ for some subspace $Y$
complementary to~$X$.
We refer to $Y$ as a \emph{contracting direction} for~$X$.
\end{defn}

Being $\Phi$-contracting is a closed condition:
For every $k\le n$ the set of $k$-dimensional $\Phi$-contracting
subspaces is closed in $\Gr_k V$.
Also note that if $X$ is $\Phi$-contracting then $X$
is $(\res{\Phi}{W})$-contracting for every subspace $W\subset V$ containing $X$
(to prove this, just restrict $P$ to~$W$).

The following lemma provides various reformulations for the assumption
of Theorem~\ref{t: local Blaschke--Kakutani}.
It will be handy throughout the proof.

\begin{lemma} \label{l: characterisations of cylindricity}
Let $\Phi$ be a Minkowski seminorm on a vector space $V$, $B$ its unit ball,
and $X, Y \subset V$ complementary subspaces.
Then the following conditions are equivalent. 
\begin{enumerate}
    \item $B \subset (B \cap X) + Y$;
    \item $\pr^Y_X B = B \cap X$;
    \item $X$ is $\Phi$-contracting with contracting direction $Y$;
    \item $(p + Y) \cap \Int B = \varnothing$ for all $p \in \pd B \cap X$.
\end{enumerate}
\end{lemma}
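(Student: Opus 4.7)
The plan is to establish the chain (1) $\Leftrightarrow$ (2) $\Leftrightarrow$ (3) directly from the definitions, and then link in (4) via (1) $\Leftrightarrow$ (4), which is the main content. The equivalence (1) $\Leftrightarrow$ (2) comes straight from unpacking $\pr_X^Y$: for $b \in B$ one has $b \in (B \cap X) + Y$ iff $\pr_X^Y b \in B \cap X$, and the reverse inclusion $B \cap X \subseteq \pr_X^Y B$ is automatic since $\pr_X^Y$ is the identity on $X$. For (2) $\Leftrightarrow$ (3), every linear projector onto $X$ is a $\pr_X^Y$ for some complement $Y$, and by 1-homogeneity of $\Phi$ the inequality $\Phi(\pr_X^Y v) \le \Phi(v)$ is the same as $\pr_X^Y(B) \subseteq B$, which, since the image already lies in $X$, is the same as $\pr_X^Y(B) = B \cap X$; the subtle case $\Phi(v) = 0$ is handled by applying the inclusion to $\lambda v$ and letting $\lambda \to \infty$.

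The substantial content is (1) $\Leftrightarrow$ (4). A preliminary observation is that $0 \in \Int B \cap X$ (any unit ball of a Minkowski seminorm contains $0$ in its interior, and $X$ is a subspace), which implies that $\pd B \cap X$ coincides with the relative boundary of $B \cap X$ in $X$. For (1) $\Rightarrow$ (4), suppose $p \in \pd B \cap X$ and $q \in (p + Y) \cap \Int B$, and pick a ball $U \ni q$ inside $\Int B$. The map $\pr_X^Y \colon V \to X$ is a linear surjection, hence open, so $\pr_X^Y(U)$ is an open neighborhood of $p$ in $X$; by (2) it lies in $B \cap X$, placing $p$ in the relative interior of $B \cap X$ and contradicting the preliminary observation. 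For (4) $\Rightarrow$ (1), take $b \in B$, set $p = \pr_X^Y b$, and suppose towards contradiction that $p \notin B$. Since $0 \in \Int B$ and $p \notin B$, there is $t_\ast \in (0, 1)$ with $t_\ast p \in \pd B \cap X$; then $t_\ast b \in \Int B$ (as $b \in B$, $0 \in \Int B$, and $t_\ast < 1$) while $t_\ast b - t_\ast p = t_\ast(b - p) \in Y$, so $t_\ast b \in (t_\ast p + Y) \cap \Int B$, violating (4). Hence $p \in B \cap X$, giving $b \in (B \cap X) + Y$.

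I expect the hardest part to be the (4) $\Rightarrow$ (1) direction, where the essential trick is to rescale both $b$ and its projection $p$ by the same factor $t_\ast$: this simultaneously drives $t_\ast p$ onto $\pd B \cap X$ while keeping $t_\ast b$ inside $\Int B$, and crucially preserves the relation $t_\ast b \in t_\ast p + Y$, so the scaled pair produces a concrete witness to the failure of (4) at a boundary point of $B \cap X$.
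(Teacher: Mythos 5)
Your proof is correct and is essentially the same as the paper's. The paper routes the equivalences as a cycle $(1)\Rightarrow(2)\Rightarrow(3)\Rightarrow(4)\Rightarrow(1)$ while you prove $(1)\Leftrightarrow(2)\Leftrightarrow(3)$ and $(1)\Leftrightarrow(4)$, but the ingredients are the same: the identity $(\pr_X^Y)^{-1}(B\cap X)=(B\cap X)+Y$, homogeneity plus the degenerate case $\Phi(v)=0$ handled by scaling, and the rescaling trick for $(4)\Rightarrow(1)$. The one genuine variation is your $(1)\Rightarrow(4)$, which uses that $\pr_X^Y$ is an open map (together with the observation $\Int_X(B\cap X)\subset\Int B$) rather than the paper's one-line inequality $1>\Phi(q)\ge\Phi(\pr_X^Y q)=\Phi(p)=1$ derived from $(3)$; both are fine, though the paper's is shorter.
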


\begin{proof}

$(1) \Rightarrow (2).$
The inclusion $B \cap X \subset \pr^Y_X B$ is trivial.
The reverse one follows from~(1)
and the identity $(\pr^Y_X)^{-1}(B\cap X) = (B\cap X)+Y$.

$(2) \Rightarrow (3).$
We have to show that $\Phi(\pr_X^Y(v))\le\Phi(v)$ for every $v\in V$.
If $\Phi(v)=1$ then $v\in B$, therefore $\pr_X^Y(v)\in B$ by (2),
hence $\Phi(\pr_X^Y(v))\le 1$.
If $\Phi(v)>0$ then the desired inequality follows by homogeneity.
If $\Phi(v)=0$ then $tv\in B$ for all $t\ge 0$,
this and (2) imply that $t\pr_X^Y(v)\in B$ for all $t\ge 0$,
therefore $\Phi(\pr_X^Y(v))=0$.
    
$(3) \Rightarrow (4).$ If $q \in (p + Y) \cap \Int B$ for some $p \in \pd B \cap X$, then $\pr^Y_X q = p$ hence
$$
    1 > \Phi(q) \overset{(3)}{\ge} \Phi\left(\pr^Y_X q\right) = \Phi(p) = 1
$$
and we obtain a contradiction.
    
$(4) \Rightarrow (1).$
Suppose that (1) is false and pick $b_0\in B$ such that $b_0\notin (B \cap X) + Y$.
Let $p_0 = \pr^Y_X(b_0)$, then $p_0\notin B$, hence $\Phi(p_0)>1$.
Let $p = p_0/\Phi(p_0)$, then $p\in \pd B \cap X$.
Now observe that the set $p + Y$ contains a point
$q = b_0/\Phi(p_0)$ which belongs to $\Int B$,
contrary to~(4).
\end{proof}

The assumption of Theorem \ref{t: local Blaschke--Kakutani} says
that for every $X\in\U$ the condition (1)
from Lemma~\ref{l: characterisations of cylindricity}
is satisfied for some $Y=Y_X\in\Gr_{n-k}V$.
In view of Lemma \ref{l: characterisations of cylindricity}(3),
this can be restated as follows: every $X\in\U$ is $\Phi$-contracting,
where $\Phi$ is the Minkowski norm associated to~$B$.

\begin{defn} \label{d: coincide near and locally cylindrical}
Let $V$ be a vector space, $B_1, B_2 \subset V$ two convex sets with zero
in the interiors, and $X \in \Gr_k V$.
We say that $B_1$ and $B_2$ \emph{coincide near} $X$ if
there exists an open set $U\subset V$ such that
$X\subset U$ and $B_1 \cap U = B_2 \cap U$.

A convex body $B\subset V$ is called \emph{locally cylindrical near $X$}
if there exists a $k$-cylinder $C$ such that $B$ and $C$
coincide near~$X$.
Note that in this case $B\cap X$ is a base of $C$ (see Definition~\ref{d: cylinder})
since $C\cap X=B\cap X$ and $B\cap X$ is compact.
\end{defn}

The next lemma shows that the assumption of
Theorem~\ref{t: local Blaschke--Kakutani} is local.

\begin{lemma}\label{l: locality of Kakutani condition}
Let $V$ be an $n$-dimensional vector space, $X\in\Gr_k V$,
and $B_1,B_2\subset V$ two convex sets with zero in the interiors.
Assume that $B_1$ and $B_2$ coincide near $X$
and $B_1$ is contained in a $k$-cylinder $C$ with base $B_1\cap X$.
Then $B_2\subset C$ as well.
\end{lemma}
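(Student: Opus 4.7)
The plan is to translate the desired inclusion $B_2 \subset C$ into condition (4) of Lemma~\ref{l: characterisations of cylindricity} and then derive a contradiction from the corresponding condition for $B_1$ by using the coincidence of $B_1$ and $B_2$ near $X$.

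First I would establish that the two bodies have the same cross-section along $X$. Let $U$ be an open neighbourhood of $X$ on which $B_1$ and $B_2$ agree. Since $X \subset U$, we have
\[
  B_1 \cap X = B_1 \cap U \cap X = B_2 \cap U \cap X = B_2 \cap X,
\]
and in particular $\pd B_1 \cap X = \pd B_2 \cap X$. Let $Y$ denote the generatrix of $C$, so that $C = (B_1 \cap X) + Y = (B_2 \cap X) + Y$. The conclusion $B_2 \subset C$ is then exactly condition (1) of Lemma~\ref{l: characterisations of cylindricity} applied to $B_2$ with the pair $(X, Y)$. By the equivalence $(1) \Leftrightarrow (4)$ in that lemma, it suffices to verify that $(p + Y) \cap \Int B_2 = \varnothing$ for every $p \in \pd B_2 \cap X$.

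Suppose, towards a contradiction, that there exist $p \in \pd B_2 \cap X$ and $q \in (p + Y) \cap \Int B_2$. Write $q = p + y$ with $y \in Y$. By convexity of $B_2$, the open segment from $p$ to $q$ lies in $\Int B_2$, so $p + ty \in \Int B_2$ for every $t \in (0, 1]$. Since $p \in X \subset U$ and $U$ is open, for all sufficiently small $t > 0$ the point $p + ty$ lies in $U$, hence in $\Int B_2 \cap U = \Int B_1 \cap U \subset \Int B_1$. But $p \in \pd B_1 \cap X$, so this produces a point of $(p + Y) \cap \Int B_1$, contradicting condition (4) of Lemma~\ref{l: characterisations of cylindricity} for $B_1$ (which holds because $B_1 \subset C$). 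This contradiction finishes the proof.

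I do not foresee any real obstacle: the argument is essentially a compatibility check between the openness of $U$ and convexity of $B_2$. The only point that needs a little care is verifying that the cross-sections $B_1 \cap X$ and $B_2 \cap X$ coincide, which is immediate because $X \subset U$; this is what allows $C$ to be described using $B_2 \cap X$ and makes Lemma~\ref{l: characterisations of cylindricity} directly applicable to $B_2$.
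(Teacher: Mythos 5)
Your proof is correct and follows essentially the same route as the paper's: both reduce the claim to condition~(4) of Lemma~\ref{l: characterisations of cylindricity}, assume a point $q\in(p+Y)\cap\Int B_2$ exists, slide a point of the open segment from $p$ towards $q$ into the neighbourhood $U$ where $B_1=B_2$, and thereby contradict $B_1\subset C$. The only cosmetic difference is that you phrase the contradiction via condition~(4) for $B_1$, while the paper notes directly that the slid point lies in $\Int B_1\cap\pd C$; these are the same observation.
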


\begin{proof}
Let $K=B_1\cap X=B_2\cap X$ and $C=K+Y$ where $Y$ is
a subspace complementary to~$X$.
Suppose to the contrary that $B_2\not\subset C$.
Then by Lemma~\ref{l: characterisations of cylindricity}
there exist $p\in\pd K$ and $q\in B_2$ such that
$$
q\in (p + Y)\cap\Int B_2 .
$$
Since $p\in B_2$ and $q\in\Int B_2$,
for every $\ep\in(0,1)$ the point $p_\ep:=p+\ep(q-p)$
belongs to $\Int B_2$.
Since $B_1$ and $B_2$ coincide near $X$,
it follows that $p_\ep\in\Int B_1$ for a sufficiently small $\ep$.
On the other hand, $p_\ep\in p+Y\subset\pd C$.
This contradicts the assumption that $B_1\subset C$.
\end{proof}

In the last lemma of this section we show that Theorem~\ref{t: local Blaschke--Kakutani} is in fact an if-and-only-if statement.

\begin{lemma}\label{l: local kakutani is if and only if}
Let $V$ be an $n$-dimensional vector space,
$B\subset V$ a convex body with zero in the interior, and $X\in\Gr_kV$.
Assume that at least one of the following holds:
\begin{enumerate}
 \item
$B$ is locally cylindrical near $X$;
\item
$B$ coincides with $B'=\{v\in V: Q(v)\le 1\}$ near $X$,
where $Q$ is a nonnegative definite quadratic form on~$V$.
\end{enumerate}
Then $B$ is contained in a $k$-cylinder with base $B\cap X$.
\end{lemma}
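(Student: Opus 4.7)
The plan is to handle the two cases uniformly by reducing, via Lemma~\ref{l: locality of Kakutani condition}, to the corresponding statement for the ``model'' body --- the cylinder $C$ in case (1), or the sublevel set $B'$ in case (2). In both cases, since $B$ coincides with the model body in an open neighborhood of $X$ and $X$ itself is contained in that neighborhood, restricting to $X$ yields $B\cap X = (\text{model})\cap X$.

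Case (1) is then essentially immediate. By the note included in Definition~\ref{d: coincide near and locally cylindrical}, the set $B\cap X$ is already a base of the $k$-cylinder $C$, so $C = (B\cap X) + Y_0$ for some subspace $Y_0$ complementary to $X$. Applying Lemma~\ref{l: locality of Kakutani condition} with $B_1 = C$ and $B_2 = B$ gives $B \subset C$, which is a $k$-cylinder with base $B\cap X$.

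Case (2) is the substantive part, and the key is to exhibit the right complementary direction. The natural candidate is the $Q$-orthogonal complement
\[
Y := X^{\perp_Q} = \{v \in V : Q(v,x) = 0 \text{ for all } x \in X\},
\]
where $Q(\cdot,\cdot)$ denotes the symmetric bilinear form polarizing $Q$. First I would check $V = X \oplus Y$, which is equivalent to $Q|_X$ being positive definite. Since $Q|_X$ is positive semidefinite, Cauchy--Schwarz implies that its radical equals $\{v\in X : Q(v)=0\}$. If some nonzero $v\in X$ had $Q(v)=0$, then $\R v \subset B'\cap X = B\cap X$, contradicting compactness of $B$. Thus $Q|_X$ is positive definite and $V = X\oplus Y$ follows by the standard dimension count.

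With $Y$ in hand, for any $v\in B'$ decompose $v = x + y$ with $x\in X$, $y\in Y$. Then
\[
Q(v) = Q(x) + 2Q(x,y) + Q(y) = Q(x) + Q(y) \ge Q(x),
\]
since the cross term vanishes by the definition of $Y$ and $Q(y)\ge 0$. Hence $Q(x)\le 1$, i.e.\ $x\in B'\cap X$, and $v \in (B'\cap X) + Y$. This shows $B' \subset (B'\cap X) + Y$, a $k$-cylinder with base $B'\cap X = B\cap X$, and a final application of Lemma~\ref{l: locality of Kakutani condition} transfers the inclusion to $B$. The main obstacle, such as it is, lies entirely in case (2) --- choosing the direction $Y$ and using boundedness of $B\cap X$ to guarantee that $Q|_X$ is nondegenerate; everything else is routine bilinear algebra.
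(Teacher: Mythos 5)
Your proposal is correct and follows essentially the same route as the paper's proof: in case (1) apply Lemma~\ref{l: locality of Kakutani condition} directly to the cylinder $C$ and $B$, and in case (2) take $Y$ to be the $Q$-orthogonal complement of $X$, use compactness of $B\cap X$ to see that $Q|_X$ is positive definite (hence $V=X\oplus Y$), show $B'\subset (B'\cap X)+Y$, and transfer to $B$ via the same lemma. The only cosmetic difference is that you spell out the Cauchy--Schwarz argument for nondegeneracy of $Q|_X$, which the paper leaves implicit.
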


\begin{proof}
First assume (1) and let $C$ be the corresponding cylinder
(see Definition \ref{d: coincide near and locally cylindrical}).
The desired property follows from
Lemma \ref{l: locality of Kakutani condition}
applied to $C$ and~$B$ in place of $B_1$ and~$B_2$, respectively.

Now assume (2) and let $Y$ be the orthogonal complement to $X$
with respect to the symmetric bilinear form associated to~$Q$.
Since $B'\cap X=B\cap X$ is compact, $\res{Q}{X}$ is positive definite
and therefore $Y$ is a complementary subspace to~$X$.
From the orthogonality we have $Q(\pr^Y_X v) \le Q(v)$ for all $v \in V$,
therefore $B'$ is contained in a cylinder $C=(B'\cap X)+Y$.
Applying Lemma \ref{l: locality of Kakutani condition}
to $B'$ and $B$ finishes the proof.
\end{proof}

\section{Quadratic forms}

The goal of this section is to prove the following local version of the well-known fact
that a normed space is Euclidean if all of its subspaces of fixed dimension $k \ge 2$ are Euclidean. 
Though the statement looks standard, we could not find it in the literature
and the proof is not so immediate as one might expect.

\begin{prop} \label{p: ellipses only assemble into ellipsoid}
    Let $\Phi$ be a Minkowski norm on an $n$-dimensional vector space $V$, 
    $2 \le k < n$ 
    an integer, and 
    $\U \subset \Gr_k V$ 
    a connected nonempty open set.
    Suppose that for every $X\in\U$ the restriction
    $\res{\Phi}{X}$  is an inner product norm on $X$.

    Then there exists a unique quadratic form  $Q$ on $X$ such that
    $\left(\res{\Phi}{X}\right) ^ 2 = \res{Q}{X}$ 
    for all 
    $X \in \U$. 
    Moreover $Q$ is nonnegative definite.
\end{prop}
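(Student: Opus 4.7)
The plan is to construct $Q$ in three stages---uniqueness, local existence near each $X_0\in\U$, and gluing via connectedness of $\U$---and to verify nonneg definiteness at the end.

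For uniqueness, I would first observe that the union $C:=\bigcup_{X\in\U}X$ has nonempty interior in $V$. Given $X_0\in\U$ and $v_0\in X_0\setminus\{0\}$, pick any $(k-1)$-dimensional subspace $L\subset X_0$ with $v_0\notin L$; then $v\mapsto L+\R v$ sends a neighborhood of $v_0$ into $\U$ (since $\U$ is open and this map is continuous with value $X_0$ at $v_0$), so every such $v$ lies in $L+\R v\in\U$, hence in $C$. Two quadratic forms agreeing with $\Phi^2$ on the open subset $C$ must coincide (their difference is a degree-$2$ polynomial vanishing on an open set, hence identically zero), giving uniqueness of $Q$ directly; in particular this forces $Q(v)=\Phi^2(v)$ for all $v\in C$.

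For local existence near $X_0\in\U$, fix a complement $Y$ of $X_0$ in $V$ and parameterize nearby $k$-planes by $A\in\Hom(X_0,Y)$ via $X_A:=\{x+Ax:x\in X_0\}$; for $A$ in a small neighborhood of $0$ we have $X_A\in\U$. Given $u,v\in X_0$ linearly independent and $y_u,y_v\in Y$ sufficiently small, extend the assignment $u\mapsto y_u,\ v\mapsto y_v$ to a small $A\in\Hom(X_0,Y)$ (for instance by zero on a complement of $\linspan(u,v)$ in $X_0$). Since $u+y_u$ and $v+y_v$ lie in $X_A$ where $\Phi^2$ restricts to a (positive definite) quadratic form, the parallelogram identity gives
\begin{equation*}
    \Phi^2((u+v)+(y_u+y_v))+\Phi^2((u-v)+(y_u-y_v))=2\Phi^2(u+y_u)+2\Phi^2(v+y_v).
\end{equation*}
By continuity of $\Phi$ and density of pairs with linearly independent $X_0$-projections, this identity extends to a parallelogram identity for $\Phi^2$ on $C_0\times C_0$, where $C_0\subset V$ is an open cone containing $X_0\setminus\{0\}$. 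Combined with the 2-homogeneity of $\Phi^2$ and the symmetry of $\Phi$ on each $X_A$, a polarization argument produces a symmetric bilinear form whose associated quadratic form $Q_{X_0}$ on $V$ satisfies $Q_{X_0}|_X=\Phi^2|_X$ for all $X$ in a neighborhood $\U_0\subset\U$ of $X_0$.

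By uniqueness applied to overlapping neighborhoods, the local quadratic forms at different basepoints agree wherever both are defined; connectedness of $\U$ then yields a single global quadratic form $Q$ with $Q|_X=\Phi^2|_X$ for every $X\in\U$. Finally, since $\Phi$ is a nonneg convex function (as a Minkowski norm), $\Phi^2$ is also convex on $V$; and since $Q=\Phi^2$ on the open set $C$, the quadratic form $Q$ is convex on an open subset of $V$. The Hessian of a quadratic form being constant, convexity on an open set forces it to be globally nonneg, i.e., $Q$ is nonneg definite. The main obstacle is the polarization step extracting a global quadratic form from the parallelogram identity on the open cone $C_0$: since $C_0$ is neither a subspace nor closed under subtraction, the standard ``parallelogram-implies-quadratic'' argument on a vector space does not apply verbatim. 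One approach is to work first with pairs lying inside a single slice $X_A$ (where polarization yields the genuine Euclidean inner product on $X_A$) and extend across slices by continuity; another is to impose the explicit ansatz $Q(x+y)=\Phi^2(x)+2\beta(x,y)+\gamma(y)$ relative to $V=X_0\oplus Y$ and determine the bilinear $\beta\colon X_0\times Y\to\R$ and quadratic $\gamma\colon Y\to\R$ from the known values of $\Phi^2$ on the cone.
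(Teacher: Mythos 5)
Your uniqueness and nonnegative-definiteness arguments match the paper's (identify the union $\Omega=\bigcup_{X\in\U}X\setminus\{0\}$ as open, note that a quadratic form is determined by its values on an open set, and derive nonnegativity from convexity of $\Phi^2$), and the final gluing-by-connectedness step is sound. The problem is in the local existence step, and it is not merely a technicality that you could defer.

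The parallelogram identity you write down is verified only for pairs $(w_1,w_2)=(u+y_u,v+y_v)$ that lie in a \emph{common} slice $X_A$ with $A$ small enough that $X_A\in\U$. You then invoke ``density of pairs with linearly independent $X_0$-projections'' plus continuity to extend the identity to an open product $C_0\times C_0$. This conflates two conditions: linear independence of $u,v$ is dense, but the set where the identity is actually verified is the smaller set where the linear extension $A\in\Hom(X_0,Y)$ with $Au=y_u,\,Av=y_v$ is small, and that set is \emph{not} dense in any product neighborhood $C_0\times C_0$. Concretely: if $u$ and $v$ are nearly parallel while $y_u/\|u\|$ and $y_v/\|v\|$ differ, the two-plane $\linspan(w_1,w_2)$ has large slope over $X_0$ and is not contained in any $X\in\U$; every nearby pair has the same feature, so there is no sequence of ``good'' pairs converging to $(w_1,w_2)$. (A three-dimensional example: $X_0=\{z=0\}$, $w_1=(1,0,\ep)$, $w_2=(1,\delta,-\ep)$ with $\delta\ll\ep$; the normal to $\linspan(w_1,w_2)$ is far from $(0,0,1)$, and this persists under small perturbations.) So the parallelogram law for $\Phi^2$ on $C_0\times C_0$ is not established, and the subsequent polarization --- which you already flag as the ``main obstacle'' and offer two undeveloped sketches for --- rests on a premise you do not have. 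This is a genuine gap, not a routine verification.

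For comparison, the paper avoids the parallelogram law entirely. It first reduces to $k=2$, then for a $\U$-compatible basis $\mathbf v$ builds the candidate form $Q^{\mathbf v}$ from the $\binom{n}{2}$ coordinate-plane restrictions (Lemma \ref{l: frame of ellipses}), shows via a three-dimensional lemma (Lemma \ref{l: ellipses in 3D}) that a shear $v_1\mapsto v_1+tv_2$ of the basis leaves $Q^{\mathbf v}$ unchanged, and then chains such shears to prove that $Q^{\mathbf v}$ agrees with $\Phi^2$ on a full neighborhood of a basis vector $p$; the map $p\mapsto Q_p$ is then locally constant and connectedness of $\Omega$ finishes the job. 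That route never needs the parallelogram identity to hold off the slices themselves, which is exactly what defeats your density argument. If you want to salvage your approach, your second suggested ansatz $Q(x+y)=\Phi^2(x)+2\beta(x,y)+\gamma(y)$ is closer in spirit to what the paper actually does with coordinate planes, but you would have to prove that $\beta$ and $\gamma$ are well defined (independent of the slice used to read them off), which is essentially the content of the paper's Claim and is where the real work lies.
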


The following notation and terminology will be handy throughout the proof.
For a basis 
${\mathbf v = (v_1,\dots,v_n)}$ 
of a vector space $V$ we denote by 
$\Pi^{\mathbf v}_{ij}$ 
its coordinate planes:
$$
 \Pi^{\mathbf v}_{ij} = \linspan\{v_i,v_j\}, \qquad 1\le i\ne j\le n .
$$
If 
$\U\subset\Gr_2V$ 
is an open set of planes, 
we say that a basis $\mathbf v$ is \textit{$\U$-compatible} if all its coordinate planes 
$\Pi^{\mathbf v}_{ij}$
belong to~$\U$.
Clearly for any plane 
$\Pi\in\U$ 
every basis $(v_1,v_2)$ of $\Pi$ can be extended
to a $\U$-compatible basis of~$V$ 
(just choose the remaining vectors sufficiently close to $\Pi$).

We precede the proof of
Proposition \ref{p: ellipses only assemble into ellipsoid}
with a couple of lemmas.

\begin{lemma}\label{l: frame of ellipses}
Let 
$\mathbf v=(v_1,\dots,v_n)$ 
be a basis of a vector space $V$ and 
$F\colon V\to\R$ 
a function whose restrictions to the coordinate planes 
$\Pi^{\mathbf v}_{ij}$
are quadratic forms on these planes.
Then there exists a unique quadratic form $Q$ on $V$ such that
$\res{Q}{\Pi^{\mathbf v}_{ij}}=\res{F}{\Pi_{ij}}$ for all $i\ne j$.
\end{lemma}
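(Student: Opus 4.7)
The plan is to use the fact that a quadratic form on $V$ is determined by its values on an explicit finite set of vectors built from the basis~$\mathbf{v}$, and conversely these values can be prescribed freely. Concretely, I would work with the symmetric bilinear form $B$ associated to the desired $Q$ via $B(x,y)=\tfrac12(Q(x+y)-Q(x)-Q(y))$, and with the matrix $(a_{ij})=(B(v_i,v_j))$.

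For \emph{uniqueness}, suppose $Q$ has the required property. Since each $v_i$ lies in any $\Pi^{\mathbf v}_{ij}$, the value $Q(v_i) = F(v_i)$ is forced. Since $v_i + v_j \in \Pi^{\mathbf{v}}_{ij}$ for $i \ne j$, the value $Q(v_i+v_j)=F(v_i+v_j)$ is forced as well. Hence the matrix entries $a_{ii}=F(v_i)$ and $a_{ij}=\tfrac12\bigl(F(v_i+v_j)-F(v_i)-F(v_j)\bigr)$ are determined by $F$, and so is $Q$.

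For \emph{existence}, I would take exactly those formulas as a definition: let $Q$ be the quadratic form on $V$ whose matrix in the basis $\mathbf v$ is the symmetric matrix $(a_{ij})$ above. It then remains to verify that $\res{Q}{\Pi^{\mathbf v}_{ij}} = \res{F}{\Pi^{\mathbf v}_{ij}}$ for every $i\ne j$. Both sides are, by hypothesis and construction, quadratic forms on the $2$-dimensional space $\Pi^{\mathbf v}_{ij}$, so they are determined by their values at the three vectors $v_i$, $v_j$, and $v_i+v_j$ (these give the three coefficients $a_{ii}$, $a_{jj}$, and $2a_{ij}$ in the coordinates $(x_i,x_j)$). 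By the choice of $a_{ii}, a_{jj}, a_{ij}$, the two quadratic forms agree at these three vectors and therefore coincide on the whole plane $\Pi^{\mathbf v}_{ij}$.

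There is no real obstacle here: the only thing to check is that the ``diagonal'' coefficients $a_{ii}$ are well defined, that is, the value $F(v_i)$ obtained from $\res{F}{\Pi^{\mathbf v}_{ij}}$ does not depend on the auxiliary index~$j$; but this is automatic because $F$ is a function on $V$ and $v_i\in\Pi^{\mathbf v}_{ij}$ for every $j\ne i$. Since no compatibility condition beyond this is needed, the lemma is purely linear algebra and reduces to the observation that the $n+\binom{n}{2}$ values $\{F(v_i)\}\cup\{F(v_i+v_j)\}_{i<j}$ parametrize quadratic forms on $V$ bijectively via the formulas above.
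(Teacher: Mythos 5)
Your proof is correct and takes essentially the same approach as the paper: both build the matrix of $Q$ in the basis $\mathbf v$ by setting the diagonal entries to $F(v_i)$ and reading the off-diagonal entries from the restriction of $F$ to $\Pi^{\mathbf v}_{ij}$, then observing that a quadratic form on a $2$-plane is determined by three such values. Your use of the explicit polarization formula for $a_{ij}$ is just a concrete way of extracting the same cross-coefficient the paper reads off from the coordinate expression of $Q_{ij}$.
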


\begin{proof}
Let $(x_1, \ldots, x_n)$ be the coordinates on $V$
with respect to the basis $\mathbf v$.
We construct the (symmetric) matrix $(c_{ij})$ of $Q$
in these coordinates from the values of $F$ on the coordinate planes.

First define $c_{ii} = F(v_i)$ for all $1\le i\le n$.
Then for each pair $i,j$ with $i\ne j$,
consider the quadratic form $Q_{ij}:=\res{F}{\Pi^{\mathbf v}_{ij}}$
on the plane $\Pi^{\mathbf v}_{ij}$. Since $Q_{ij}(v_i)=F(v_i)=c_{ii}$
and $Q_{ij}(v_j)=F(v_j)=c_{jj}$,
the coordinate expression of $Q_{ij}$ has the form
$$
Q_{ij}(x_i,x_j)=c_{ii}x_i^2 + c_{jj} x_j^2 + 2 c_{ij} x_i x_j
$$
for some $c_{ij}\in\R$.
We use this expression to define $c_{ij}$.

The resulting quadratic form $ Q(x_1, \ldots, x_n) = \sum_{i,j} c_{ij}x_ix_j$
satisfies $\res{Q}{\Pi^{\mathbf v}_{ij}} = Q_{ij}$ for all $i\ne j$.
The uniqueness is obvious from the construction.
\end{proof}

The next lemma essentially covers the case $n=3$
of Proposition \ref{p: ellipses only assemble into ellipsoid}.
Note that in this case we do not assume that $\U$ is connected.

\begin{lemma}\label{l: ellipses in 3D}
Let $X$ be a $3$-dimensional vector space,
$F\colon X\to\R$ 
a continuous function, and 
$\U\subset\Gr_2V$ 
a nonempty open set.
Suppose that for every 
$\Pi\in\U$ 
the restriction
$\res{F}{\Pi}$ 
is a quadratic form on $\Pi$.
Then there exists a unique quadratic form $Q$ on $X$ such that 
$\res{F}{\Pi}=\res{Q}{\Pi}$ for all $\Pi\in\U$.
\end{lemma}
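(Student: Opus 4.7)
The plan is to produce a candidate quadratic form $Q$ via Lemma~\ref{l: frame of ellipses} from a $\U$-compatible basis, and then verify $F|_\Pi = Q|_\Pi$ for all $\Pi \in \U$ by a local analysis of nearby planes. A crucial feature of dimension $3$ is that any two planes meet along a line through the origin, so the open conical subsets of $X$ swept by different parts of $\U$ always overlap in a nonempty open subset; this eliminates the need to assume $\U$ is connected.

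For any $\Pi_0 \in \U$, I can find a $\U$-compatible basis $\mathbf v = (v_1, v_2, v_3)$ of $X$, i.e.\ one with all three coordinate planes $\Pi^{\mathbf v}_{ij} := \linspan\{v_i, v_j\} \in \U$, by perturbing a basis of $\Pi_0$ slightly so that the new coordinate planes stay in $\U$. Lemma~\ref{l: frame of ellipses} then yields the unique quadratic form $Q = Q^{\Pi_0}$ on $X$ with $Q|_{\Pi^{\mathbf v}_{ij}} = F|_{\Pi^{\mathbf v}_{ij}}$ for $i \ne j$, and the goal is to show $F|_\Pi = Q|_\Pi$ for every $\Pi$ in a neighborhood of $\Pi_0$ in $\U$.

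I parametrize planes near $\Pi^{\mathbf v}_{12}$ as $\Pi_{(a, b)} := \linspan\{v_1 + a v_3,\; v_2 + b v_3\}$ for $(a, b)$ in an open subset of $\R^2$ near the origin where $\Pi_{(a, b)} \in \U$, and write
\begin{equation*}
F\bigl(s(v_1 + a v_3) + t(v_2 + b v_3)\bigr) = \alpha(a)\, s^2 + 2\gamma(a, b)\, s t + \beta(b)\, t^2 .
\end{equation*}
The diagonal coefficients $\alpha(a) = F(v_1 + a v_3) = Q(v_1 + a v_3)$ and $\beta(b) = F(v_2 + b v_3) = Q(v_2 + b v_3)$ are explicit quadratic polynomials coming from $F$ on $\Pi^{\mathbf v}_{13}, \Pi^{\mathbf v}_{23} \in \U$. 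Evaluating at $(s, t) = (1, 1)$ and $(2, 1)$ and eliminating $\gamma$ yields
\begin{equation*}
h(2a + b) \;=\; 2 \alpha(a) + 2 g(a + b) - \beta(b),
\end{equation*}
where $h(d) := F(2 v_1 + v_2 + d v_3)$ and $g(u) := F(v_1 + v_2 + u v_3)$. Holding $u = a + b$ fixed and comparing two choices $a_1, a_2$ gives a finite-difference identity of the form $h(d_1) - h(d_2) = p(d_1) - p(d_2)$ for $d_i = a_i + u$ and an explicit quadratic polynomial $p$ whose coefficients are read off from $Q$. Thus $h - p$ is constant on a neighborhood of $0$, and the constant is pinned down by $h(0) = F(2 v_1 + v_2) = Q(2 v_1 + v_2)$, since $2 v_1 + v_2 \in \Pi^{\mathbf v}_{12}$. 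Hence $h(d) = Q(2 v_1 + v_2 + d v_3)$ and, substituting back, $g(u) = Q(v_1 + v_2 + u v_3)$. The quadratic forms $F|_{\Pi_{(a, b)}}$ and $Q|_{\Pi_{(a, b)}}$ now agree at the three non-collinear directions $v_1 + a v_3$, $v_2 + b v_3$, $v_1 + v_2 + (a + b) v_3$, so they coincide on the entire plane $\Pi_{(a, b)}$.

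This establishes $F|_\Pi = Q^{\Pi_0}|_\Pi$ for $\Pi$ in a neighborhood of $\Pi_0$ in $\U$. The same construction produces a quadratic form $Q^{\Pi_1}$ around any other $\Pi_1 \in \U$; since in $3$-dimensional $X$ any two planes intersect in a line, the conical open subsets of $X$ on which $Q^{\Pi_0}$ and $Q^{\Pi_1}$ each agree with $F$ overlap in a nonempty open subset of $X$, which forces $Q^{\Pi_0} = Q^{\Pi_1}$ as polynomials. Hence a single $Q$ works globally, and uniqueness follows from the same polynomial-vanishing argument. The main technical obstacle is extracting the quadratic character of $h$ from only continuous data; the finite-difference identity above handles this cleanly by bypassing differentiation, with the ``integration constant'' fixed by the already-known value $h(0) = Q(2 v_1 + v_2)$ coming from $\Pi^{\mathbf v}_{12} \in \U$.
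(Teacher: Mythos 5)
Your proof is correct, but it takes a noticeably longer route than the paper's, and it misses the observation that makes the paper's argument short. Both proofs start the same way: fix a $\U$-compatible basis $\mathbf v$, invoke Lemma~\ref{l: frame of ellipses} to build $Q$, and reduce to showing that the two-dimensional quadratic forms $\res{F}{\Pi}$ and $\res{Q}{\Pi}$ agree by exhibiting three distinct lines of $\Pi$ on which they coincide. You find two of those lines immediately (in $\Pi^{\mathbf v}_{13}$ and $\Pi^{\mathbf v}_{23}$, via your $\alpha$ and $\beta$), and then you work quite hard — a finite-difference identity on $h$, pinned by $h(0)$ — to manufacture a third direction $v_1+v_2+(a+b)v_3$. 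But a third line is already sitting there: $\Pi_{(a,b)}$ also meets the third coordinate plane $\Pi^{\mathbf v}_{12}$, namely in the line $\R(-b v_1 + a v_2)$ whenever $(a,b)\ne(0,0)$, and $F=Q$ on that line by construction of $Q$. This is exactly what the paper uses, phrased as: for any $\Pi\in\U$ not containing $v_1,v_2,v_3$, the three lines $\ell_{ij}=\Pi\cap\Pi^{\mathbf v}_{ij}$ are distinct and carry $F=Q$, so $\res{F}{\Pi}=\res{Q}{\Pi}$; non-generic $\Pi$ follow by continuity. That argument applies in one shot to every $\Pi\in\U$ with a single fixed basis, so no patching is needed. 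Your patching step (using that any two $2$-planes in a $3$-space meet in a line, hence the conical regions where $F$ equals $Q^{\Pi_0}$ and $Q^{\Pi_1}$ have nonempty open intersection) is legitimate and is indeed the right way to compensate for the missing connectedness assumption if one only proves the local statement — but it becomes superfluous once you notice the third intersection line. In short: correct, but you reinvented the third data point the hard way, and both approaches ultimately rest on the same fact that three distinct lines determine a quadratic form on a plane.
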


\begin{proof}
Fix a $\U$-compatible basis $\mathbf v=(v_1,v_2,v_3)$ of $X$.
By Lemma~\ref{l: frame of ellipses}, there exists a unique quadratic
form $Q$ on $X$ that coincides with $F$ on the coordinate planes $\Pi^{\mathbf v}_{ij}$.
We show that this $Q$ satisfies
$\res{F}{\Pi}=\res{Q}{\Pi}$ for all $\Pi\in\U$.

First consider a plane $\Pi\in\U$ which is \textit{generic}
in the sense that it does not contain any of the vectors $v_1,v_2,v_3$.
Since $F$ and $Q$ coincide on the coordinate planes,
they coincide on the lines
$$
 \ell_{ij} := \Pi \cap \Pi^{\mathbf v}_{ij}, \qquad 1\le i<j\le 3 .
$$
Both $\res{F}{\Pi}$ and $\res{Q}{\Pi}$ are quadratic forms on $\Pi$,
and a quadratic form on $\Pi$ is uniquely determined by its values
on the three distinct lines $\ell_{ij}$.
Hence $\res{F}{\Pi}=\res{Q}{\Pi}$ if $\Pi$ is generic.
To finish the proof, observe that any non-generic plane $\Pi$
can be approximated by generic ones
and the identity $\res{F}{\Pi}=\res{Q}{\Pi}$ follows by continuity.
\end{proof}

\begin{proof}[Proof of Proposition \ref{p: ellipses only assemble into ellipsoid}]

Let $V$, $\Phi$ and $\U\subset\Gr_k V$ be as in
Proposition \ref{p: ellipses only assemble into ellipsoid}.
Define $F=\Phi^2$ and
$$
 \Omega = \bigcup_{X\in\U} X\setminus\{0\}.
$$
It is easy to see that $\Omega$ is a connected open subset of~$V$.

The assertion of the proposition can be rewritten as follows:
there exists a unique quadratic form $Q$ on $V$
such that $\res{F}{\Omega}=\res{Q}{\Omega}$
and furthermore $Q$ is nonnegative definite.
First we verify the uniqueness and nonnegative definiteness of such $Q$.
The uniqueness follows from the facts
that $\Omega$ is open and a quadratic form is uniquely
determined by its restriction to any open set.

Now suppose that $Q$ is a quadratic form such that
$\res{F}{\Omega}=\res{Q}{\Omega}$ and $Q(v)<0$ for some $v\in V$.
Fix $p\in\Omega$ and define $f(t)=F(p+tv)$ for all $t\in\R$.
The function $f$ is convex since $F=\Phi^2$
is a convex function on~$V$.
The identity  $\res{F}{\Omega}=\res{Q}{\Omega}$ implies that
$f(t)=Q(p+tv)$ for all $t$ sufficiently close to~0.
Therefore $f$ is smooth near~0 and $f''(0) = 2Q(v)<0$,
contrary to the convexity of $f$.
This contradiction shows that $Q$ must be nonnegative definite.

It remains to prove the existence of a quadratic form $Q$ such that
$\res{F}{\Omega}=\res{Q}{\Omega}$.
First we reduce this statement to the special case when $k=2$.
Consider the set
$$
\U_2 = \{\Pi \in \Gr_2 V \colon \Pi \subset X \text{ for some } X \in \U\}
$$
and observe that $\U_2$ is a connected open subset of $\Gr_2 V$,
$\res{\Phi}{\Pi}$ is a quadratic form for every $\Pi \in \U_2$,
and $\bigcup_{\Pi\in\U_2}\Pi\setminus\{0\}=\Omega$.
Thus it suffices to prove the proposition for $k=2$ and $\U_2$ in place of $\U$.
We therefore assume $k=2$ for the rest of the proof.

For a $\U$-compatible basis $\mathbf v = (v_1,\dots,v_n)$ of $V$,
we denote by $Q^{\mathbf v}$ the quadratic form on $V$ satisfying
$\res{F}{\Pi^{\mathbf v}_{ij}} = \res{Q^{\mathbf v}}{\Pi^{\mathbf v}_{ij}}$
for all $1 \le i < j \le n$.
Such a form exists and is unique by Lemma~\ref{l: frame of ellipses}.
Clearly $Q^{\mathbf v}$ does not change if the vectors of $\mathbf v$
are permuted or multiplied by nonzero scalars.

\begin{claim}
Let $v_1,\dots,v_n\in V$ and $t\in\R$ be such that
$\mathbf v = (v_1, v_2, \ldots, v_n)$ and
$$
\mathbf v' = (v_1+tv_2, v_2, \ldots, v_n)
$$
are $\U$-compatible bases.
Then $Q^{\mathbf v} = Q^{\mathbf{v'}}$.
\end{claim}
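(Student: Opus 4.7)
The plan is to compare coordinate planes of $\mathbf v$ and $\mathbf v'$, observe that they mostly coincide, and use the 3-dimensional Lemma~\ref{l: ellipses in 3D} to handle the planes that differ. Explicitly, the shearing affects the span of $v_1$ with other basis vectors only: we have $\Pi^{\mathbf v}_{12}=\Pi^{\mathbf v'}_{12}$, $\Pi^{\mathbf v}_{2j}=\Pi^{\mathbf v'}_{2j}$ for $j\ge 3$, and $\Pi^{\mathbf v}_{ij}=\Pi^{\mathbf v'}_{ij}$ for $i,j\ge 3$, while $\Pi^{\mathbf v}_{1j}=\linspan\{v_1,v_j\}$ differs from $\Pi^{\mathbf v'}_{1j}=\linspan\{v_1+tv_2, v_j\}$ for each $j\ge 3$. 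By the uniqueness part of Lemma~\ref{l: frame of ellipses} applied to the basis $\mathbf v'$, to prove $Q^{\mathbf v}=Q^{\mathbf v'}$ it is enough to check that $\res{Q^{\mathbf v}}{\Pi^{\mathbf v'}_{1j}}=\res{F}{\Pi^{\mathbf v'}_{1j}}$ for every $j\ge 3$ (on the other coordinate planes of $\mathbf v'$ the identity is immediate since those planes also appear among the coordinate planes of $\mathbf v$).

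Fix such $j\ge 3$ and consider the $3$-dimensional subspace $W_j=\linspan\{v_1,v_2,v_j\}$, which contains $\Pi^{\mathbf v'}_{1j}$ as well as the three coordinate planes $\Pi^{\mathbf v}_{12}$, $\Pi^{\mathbf v}_{1j}$, $\Pi^{\mathbf v}_{2j}$ of $\mathbf v$ lying in $W_j$. The set $\U_j:=\{\Pi\in\Gr_2 W_j:\Pi\in\U\}$ is open in $\Gr_2 W_j$ and nonempty, as it contains all four planes above. Apply Lemma~\ref{l: ellipses in 3D} to $W_j$, $\res{F}{W_j}$ and $\U_j$: there exists a unique quadratic form $\wt Q_j$ on $W_j$ with $\res{\wt Q_j}{\Pi}=\res{F}{\Pi}$ for every $\Pi\in\U_j$.

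Now $\res{Q^{\mathbf v}}{W_j}$ is a quadratic form on $W_j$ whose restrictions to $\Pi^{\mathbf v}_{12},\Pi^{\mathbf v}_{1j},\Pi^{\mathbf v}_{2j}$ agree with those of $F$ by the definition of $Q^{\mathbf v}$; applying Lemma~\ref{l: frame of ellipses} inside $W_j$ with basis $(v_1,v_2,v_j)$, the quadratic form on $W_j$ with these three restrictions is unique, so $\res{Q^{\mathbf v}}{W_j}=\wt Q_j$. Since $\Pi^{\mathbf v'}_{1j}\in\U_j$ (because $\mathbf v'$ is $\U$-compatible), this yields
\[
\res{Q^{\mathbf v}}{\Pi^{\mathbf v'}_{1j}}=\res{\wt Q_j}{\Pi^{\mathbf v'}_{1j}}=\res{F}{\Pi^{\mathbf v'}_{1j}},
\]
which is the required identity; hence $Q^{\mathbf v}=Q^{\mathbf v'}$.

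There is no real obstacle here once the right reduction is chosen: the only subtle point is to resist trying to change the basis in one step and instead localize the argument to one $3$-dimensional subspace $W_j$ per index $j\ge 3$, which is exactly where Lemma~\ref{l: ellipses in 3D} becomes applicable.
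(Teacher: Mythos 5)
Your proof is correct and follows essentially the same route as the paper: reduce to the planes $\Pi^{\mathbf v'}_{1j}$ for $j\ge 3$, pass to the $3$-dimensional subspace $W_j=\linspan\{v_1,v_2,v_j\}$, invoke Lemma~\ref{l: ellipses in 3D} there, and then use the uniqueness in Lemma~\ref{l: frame of ellipses} to identify the resulting form with $\res{Q^{\mathbf v}}{W_j}$. The paper's proof is identical in substance, so there is nothing to add.
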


\begin{proof}
By the definition of $Q^{\mathbf{v'}}$ it suffices to show that
\begin{equation} \label{e: old Qv on new coordinate planes}
\res{Q^{\mathbf v}}{\Pi^{\mathbf v'}_{ij}} = \res{F}{\Pi^{\mathbf v'}_{ij}}
\end{equation}
for all $1\le i<j\le n$.
Observe that $\Pi^{\mathbf v'}_{ij}=\Pi^{\mathbf v}_{ij}$
if $i,j\ge 2$ or $\{i,j\}=\{1,2\}$,
so \eqref{e: old Qv on new coordinate planes} trivially holds in these cases.
It remains to verify \eqref{e: old Qv on new coordinate planes}
for $i=1$ and $j>2$.
Fix $j>2$ and apply Lemma \ref{l: ellipses in 3D}
to the 3-dimensional subspace
$$
 X = \linspan\{v_1,v_2,v_j\},
$$
the set $\U\cap\Gr_2 X$ in place of $\U$,
and the function $\res{F}{X}$ in place of~$F$.
This yields a quadratic form $Q$ on $X$ such that
$\res{Q}{\Pi}=\res{F}{\Pi}$ for all planes $\Pi\in \U\cap\Gr_2 X$.
In particular $Q$ and $F$ coincide
on the planes
$\Pi^{\mathbf v}_{12}$, $\Pi^{\mathbf v}_{1j}$ and~$\Pi^{\mathbf v}_{2j}$,
therefore $Q=\res{Q^{\mathbf v}}{X}$
by the uniqueness part of Lemma \ref{l: frame of ellipses}.
On the other hand, $Q$ and $F$ coincide on the plane
$\Pi^{\mathbf v'}_{1j}=\linspan\{v_1+tv_2,v_j\}$
since this plane also belongs to $\U\cap \Gr_2 X$.
Therefore \eqref{e: old Qv on new coordinate planes} holds for all 
$1\le i<j\le n$
and Claim follows.
\end{proof}

Fix $p\in\Omega$ and choose a $\U$-compatible
basis $\mathbf v = (v_1,\dots,v_n)$ such that $v_1=p$.
Fix $\ep>0$ so small that for every point $p'\in V$ of the form
\begin{equation} \label{e: variatiion of v1}
 p' = p + \sum_{i=1}^n t_i v_i
 \quad \text{where $t_i\in(-\ep,\ep)$ for all $1\le i\le n$},
\end{equation}
the collection $\mathbf v'=(p',v_2,\dots,v_n)$
is a $\U$-compatible basis.
We are going to show that $Q^{\mathbf v'}=Q^{\mathbf v}$
for every such $\mathbf v'$.

Let $p'\in V$ be as in \eqref{e: variatiion of v1}.
Connect $p$ to $p'$ by a sequence $p_0=p,p_1,\dots,p_n=p'$
where
$$
 p_m =  p + \sum_{i=1}^m t_i v_i, \qquad m=1,\dots, n ,
$$
and let $\mathbf v^m=(p_m,v_2,\dots,v_n)$ for each~$m$.
By the choice of $\ep$, each $\mathbf v^m$ is a $\U$-compatible basis.
Observe that $Q^{\mathbf v^1} = Q^{\mathbf v}$ since $\mathbf v^1$
is obtained from $\mathbf v$ by rescaling the first basis vector.
For $2\le m \le n$, the basis $\mathbf v^{m}$ is obtained
from $\mathbf v^{m-1}$ by a transformation as in Claim
(up to a permutation of indices),
hence $Q^{\mathbf v^{m}} = Q^{\mathbf v^{m-1}}$.
Thus
$$
Q^{\mathbf v}=Q^{\mathbf v^1}=Q^{\mathbf v^2}
=\dots=Q^{\mathbf v^n}=Q^{\mathbf v'} .
$$
In particular, since $p'$ is one of the basis vectors in $\mathbf v'$,
we have $Q^{\mathbf v}(p')=Q^{\mathbf v'}(p')=F(p')$.

Thus $Q^{\mathbf v}(p')=F(p')$ for any point $p'$ of the form \eqref{e: variatiion of v1}.
The range of such points $p'$ is an open neighborhood of~$p$,
therefore we have proven the following statement
(where $Q^{\mathbf v}$ is renamed to~$Q_p$):
For every $p\in\Omega$ there exists a quadratic form $Q_p$ on $V$
such that $Q_p$ and $F$ coincide in a neighborhood of~$p$.
Since a quadratic form is uniquely determined by its restriction
to any open set, such $Q_p$ is unique for every $p\in\Omega$
and the map $p\mapsto Q_p$ is locally constant on~$\U$.
Since $\Omega$ is connected, it follows that
all $Q_p$, $p\in\Omega$, are one and the same quadratic form.
Denote this quadratic form by $Q$ and observe that $Q(p)=Q_p(p)=F(p)$
for all $p\in\Omega$.
Thus $\res{Q}{\Omega}=\res{F}{\Omega}$
and Proposition \ref{p: ellipses only assemble into ellipsoid} follows.
\end{proof}

\section{Local cylinders}
\label{s: local cylinders}

In this section we collect technical facts about locally cylindrical
convex bodies,
see Definition~\ref{d: coincide near and locally cylindrical}.
Throughout this section $V$ is an $n$-dimensional vector space,
$\Phi$ a Minkowski norm on~$V$,
and $B$ is the unit ball of~$\Phi$.

\begin{lemma} \label{l: two cylinders with the same generatrix coincide}
Let $X_1,X_2\in \Gr_k V$ and $Y\in\Gr_{n-k} V$ be such that
$$
 B \subset (B \cap X_i) + Y \text{ for } i = 1, 2 .
$$
Then $(B \cap X_1) + Y = (B \cap X_2) + Y$.
\end{lemma}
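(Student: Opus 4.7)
My plan is to observe that a $k$-cylinder with generatrix $Y$ is \emph{saturated} under the quotient map $\pi\co V\to V/Y$, in the sense that $S+Y=\pi^{-1}(\pi(S))$ for every subset $S\subset V$. Consequently a cylinder of the form $(B\cap X_i)+Y$ is completely determined by its image $\pi(B\cap X_i)\subset V/Y$, and the task reduces to checking that these two images agree for $i=1,2$.

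The execution is then immediate. Applying $\pi$ to the hypothesis $B\subset (B\cap X_i)+Y$ and using $\pi\bigl((B\cap X_i)+Y\bigr)=\pi(B\cap X_i)$, I get $\pi(B)\subset\pi(B\cap X_i)$. The reverse inclusion $\pi(B\cap X_i)\subset\pi(B)$ is trivial since $B\cap X_i\subset B$, so $\pi(B\cap X_1)=\pi(B)=\pi(B\cap X_2)$. Taking preimages under $\pi$ yields
$$
(B\cap X_1)+Y=\pi^{-1}(\pi(B))=(B\cap X_2)+Y,
$$
which is the conclusion.

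I do not anticipate any genuine obstacle; the lemma is essentially the observation that two cylinders sharing the same generatrix and both circumscribing $B$ have the same projection to $V/Y$. If one prefers to stay inside $V$ and avoid passing to a quotient, the same fact can be proven with the projector $\pr^Y_{X_2}$: any $p\in\bigl((B\cap X_1)+Y\bigr)\cap X_2$ can be written as $p=\pr^Y_{X_2}(q)$ for some $q\in B\cap X_1\subset B$, and Lemma~\ref{l: characterisations of cylindricity}(2) applied to $X_2$ then forces $p\in B\cap X_2$. Together with the trivial reverse inclusion, this gives $\bigl((B\cap X_1)+Y\bigr)\cap X_2=B\cap X_2$; since this set is compact, the remark following Definition~\ref{d: cylinder} identifies it as a base of the cylinder $(B\cap X_1)+Y$, yielding the desired equality.
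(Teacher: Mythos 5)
Your primary (quotient) argument is correct and is essentially the paper's proof in different clothing: the paper writes the inclusion chain $(B\cap X_1)+Y \subset B+Y \subset ((B\cap X_2)+Y)+Y = (B\cap X_2)+Y$ and then swaps $X_1$ and $X_2$, which is precisely what your passage to $V/Y$ and back encodes (with $\pi(S_1)\subset\pi(S_2)$ standing in for $S_1+Y\subset S_2+Y$). Your alternative via $\pr^Y_{X_2}$ and Lemma~\ref{l: characterisations of cylindricity} is also valid, but it is heavier machinery than this one-line set computation requires and quietly uses that $X_2$ and $Y$ are complementary, a fact the direct inclusion chain never needs.
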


\begin{proof}
The assumption of the lemma implies that
$$
 (B \cap X_1) + Y \subset B + Y
 \subset \left((B \cap X_2) + Y\right) + Y = (B \cap X_2) + Y .
$$
Swapping $X_1$ and $X_2$ yields the opposite inclusion,
hence the result.
\end{proof}

\begin{lemma}\label{l: same cylinder}
Let 
$\U\subset \Gr_kV$ 
be an open set and $X_0\in\U$.
Suppose that all subspaces from $\U$ are $\Phi$-contracting with the same contracting direction 
$Y_0\in\Gr_{n-k}V$
(see Definition \ref{d: contracting subspace}).
Then $B$ is locally cylindrical near~$X_0$.
\end{lemma}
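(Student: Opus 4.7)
The plan is to show that $B$ coincides with the specific $k$-cylinder
\[
C := (B \cap X_0) + Y_0
\]
on an open tubular neighborhood of $X_0$. Fix auxiliary norms $\|\cdot\|$ on $X_0$ and on $Y_0$, and for $\ep > 0$ set
\[
U_\ep := \{v \in V : \|\pr^{X_0}_{Y_0}(v)\| < \ep\};
\]
this is open in $V$ and contains $X_0$. Since $B \subset C$ by Lemma~\ref{l: characterisations of cylindricity}(1) applied to $X_0 \in \U$, it suffices to produce $\ep > 0$ with $C \cap U_\ep \subset B$.

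The structural ingredient parameterizes $\U$ near $X_0$. Using the decomposition $V = X_0 \oplus Y_0$, every $k$-plane $X \in \Gr_k V$ complementary to $Y_0$ is the graph of a unique linear map $A \in \Hom(X_0, Y_0)$, and this graph chart is a homeomorphism onto an open neighborhood of $X_0$ in $\Gr_k V$. By openness of $\U$ there is therefore some $r > 0$ such that the graph of every $A$ with operator norm $\|A\| < r$ lies in $\U$. For such $X$, Lemma~\ref{l: two cylinders with the same generatrix coincide} applied to $X$ and $X_0$ with generatrix $Y_0$ gives
\[
(B \cap X) + Y_0 = (B \cap X_0) + Y_0 = C.
\]
Applying $\pr^{Y_0}_{X_0}$ to both sides (this kills $Y_0$ and inverts the map $x \mapsto x + A(x)$ on $X$) one concludes
\[
B \cap X = \{x + A(x) : x \in K\}, \qquad K := B \cap X_0.
\]
In particular $x + A(x) \in B$ for every $x \in K$ and every $A \in \Hom(X_0, Y_0)$ with $\|A\| < r$.

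The final step extracts a uniform tube from this pointwise statement. For any $x \in K \setminus \{0\}$ and $y \in Y_0$, pick $\phi \in X_0^*$ with $\phi(x) = \|x\|$ and $\|\phi\| = 1$, and set $A(x') := \phi(x')\,y/\|x\|$; then $A(x) = y$ and $\|A\| \le \|y\|/\|x\|$, so $x + y \in B$ whenever $\|y\| < r\|x\|$. Because $0$ lies in the interior of $K$ relative to $X_0$, there is $\rho > 0$ with $\|x\| \ge \rho$ for all $x \in \pd K$. On $K_1 := \{x \in K : \|x\| \ge \rho/2\}$ the previous bound then forces $x + y \in B$ as long as $\|y\| < r\rho/2$, while the compact piece $K_2 := \{x \in K : \|x\| \le \rho/2\}$ satisfies $\Phi < 1$ and therefore sits inside $\Int B$, admitting a uniform $V$-neighborhood inside $\Int B$ by compactness. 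Taking $\ep > 0$ below both thresholds yields $K + \{y \in Y_0 : \|y\| < \ep\} \subset B$, i.e., $C \cap U_\ep \subset B$, as required. The main obstacle is precisely this uniform extraction, since the estimate $\|y\| < r\|x\|$ degenerates near $x = 0$; the splitting $K = K_1 \cup K_2$ is the device that reconciles the two regimes.
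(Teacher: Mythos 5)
Your proof is correct, but it takes a genuinely different and more computational route than the paper's. After establishing $B\subset C:=(B\cap X_0)+Y_0$ and that $(B\cap X)+Y_0=C$ for all $X\in\U$ (both via Lemmas~\ref{l: characterisations of cylindricity} and~\ref{l: two cylinders with the same generatrix coincide}, as you do), the paper avoids any metric estimates: it takes as the ambient open set
\[
U \;=\; \Int B \,\cup\, \bigcup_{X\in\U} X,
\]
which is open (a standard fact about unions of subspaces ranging over an open set of the Grassmannian, with the origin absorbed into $\Int B$), contains $X_0$, and satisfies $B\cap U = C\cap U$ immediately: on $\Int B$ the inclusion $B\subset C$ suffices, and on each $X\in\U$ one has $C\cap X = ((B\cap X)+Y_0)\cap X = B\cap X$ because $X\cap Y_0=0$. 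Your argument instead parametrizes planes near $X_0$ as graphs of small $A\in\Hom(X_0,Y_0)$, converts the cylinder equality into $B\cap X=\{x+A(x):x\in K\}$, and then manufactures, for each $(x,y)$ with $\|y\|<r\|x\|$, a rank-one $A$ with $A(x)=y$ and $\|A\|<r$; the degeneracy near $x=0$ is handled by a separate compactness argument on $K_2\subset\Int B$. This is a valid tubular-neighborhood construction and is arguably more self-contained (it never uses openness of a union of subspaces), at the cost of being substantially longer and requiring explicit quantitative control ($r$, $\rho$, the splitting $K=K_1\cup K_2$) where the paper's choice of $U$ sidesteps all estimates.
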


\begin{proof}
By Lemma~\ref{l: characterisations of cylindricity} we have
$ B\subset (B\cap X)+Y_0$ for all $X\in\U$.
Then 
Lemma~\ref{l: two cylinders with the same generatrix coincide} 
implies that all cylinders 
$(B\cap X)+Y_0$, $X\in\U$,
are in fact one and the same cylinder, which we denote by~$C$.
Let $U\subset V$ be the union of $\Int B$ and all subspaces from $\U$.
The set $U$ is open, contains $X_0$,
and satisfies $B\cap U=C\cap U$ since
$
 C\cap X = ((B\cap X)+Y_0)\cap X = B\cap X
$
for every $X\in\U$.
Thus $B$ and~$C$ coincide near $X_0$
hence $B$ is locally cylindrical near~$X_0$.
\end{proof}

\begin{lemma}\label{l: propagation of local cylindricity}
Let $\U\subset\Gr_k V$ be a nonempty connected open set.
Suppose that $B$ is locally cylindrical near~$X$
for every $X \in \U$.
Then there exists a $k$-cylinder $C$ such that
$B\cap X=C\cap X$ for all $X \in \U$.
\end{lemma}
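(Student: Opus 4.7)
The plan is to extract a single generatrix from the family of local cylinders and then glue. For each $X\in\U$, local cylindricity provides a $k$-cylinder with base $B\cap X$ whose (unique) generatrix I denote $Y_X\in\Gr_{n-k}V$. The main claim is that the map $X\mapsto Y_X$ is locally constant on $\U$. Granted this, connectedness of $\U$ yields a common value $Y_0$; Lemma~\ref{l: local kakutani is if and only if} then gives $B\subset(B\cap X)+Y_0$ for every $X\in\U$, Lemma~\ref{l: two cylinders with the same generatrix coincide} forces all these containing cylinders to agree as a single cylinder $C$, and the complementarity of $X$ and $Y_0$ yields $C\cap X=((B\cap X)+Y_0)\cap X=B\cap X$ for every $X\in\U$, finishing the proof.

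Local constancy is proved by a tangent-hyperplane argument. Fix $X_0\in\U$ and set $Y_0=Y_{X_0}$, $K_0=B\cap X_0$, $C_0=K_0+Y_0$, and let $U_0\supset X_0$ be an open set on which $B$ coincides with $C_0$. Continuity of $X\mapsto B\cap X$ in Hausdorff distance, together with compactness of $K_0\subset U_0$, yields $B\cap X\subset U_0$ for every $X\in\U$ sufficiently close to $X_0$. For such $X$, the local identification $B=C_X=(B\cap X)+Y_X$ near $X$ implies that smoothness of $\pd B$ at $p\in\pd(B\cap X)$ (in $V$) reduces to smoothness of $\pd(B\cap X)$ at $p$ (in $X$), and such points form a full-measure subset of $\pd(B\cap X)$.

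At each such smooth $p$, the direction space $H_p\subset V$ of the tangent hyperplane of $B$ at $p$ contains $Y_X$ (because $p+Y_X\subset\pd C_X=\pd B$ near $p$) and also contains $Y_0$ (because $p\in U_0$ forces $\pd B=\pd C_0$ near $p$, and $p+Y_0\subset\pd C_0$). From the $C_X$-description one has $H_p=L_p+Y_X$, where $L_p\subset X$ is the direction of the tangent hyperplane of $B\cap X$ at $p$. As $p$ ranges over smooth boundary points, $\bigcap_p L_p=\{0\}$, since the outer normals to a bounded convex body with $0$ in its interior span the dual; hence by the complementarity of $X$ and $Y_X$ we obtain $\bigcap_p H_p=(\bigcap_p L_p)+Y_X=Y_X$. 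Since every $H_p$ also contains $Y_0$, it follows that $Y_0\subset Y_X$, and $Y_0=Y_X$ by dimension count. The main conceptual obstacle is this tangent-hyperplane extraction of $Y_X$ intrinsically from $\pd B$; once it is in hand, the reduction to it via the open set $U_0$ and Hausdorff continuity of cross-sections, as well as the final gluing via Lemmas~\ref{l: local kakutani is if and only if} and~\ref{l: two cylinders with the same generatrix coincide}, are routine.
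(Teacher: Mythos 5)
Your argument is correct, but it takes a genuinely different and more laborious path to local constancy than the paper's. You work at the level of the generatrix: you extract $Y_X$ intrinsically as $\bigcap_p H_p$ over smooth boundary points $p$ of $B\cap X$, using that the outer normals of $B\cap X$ at smooth points span $X^*$ to get $\bigcap_p L_p=\{0\}$, and hence $\bigcap_p(L_p+Y_X)=Y_X$ by projecting along $Y_X$ onto $X$. This works, but it requires the smoothness-reduction for cylinders, density of smooth points, and the normal-spanning fact. The paper instead works at the level of the whole cylinder: it first shows that the $k$-cylinder $C_X$ coinciding with $B$ near $X$ is \emph{unique} (two applications of Lemma~\ref{l: locality of Kakutani condition} give mutual containment $C_1\subset C_2\subset C_1$), and then gets local constancy of $X\mapsto C_X$ immediately: if $B\cap X'\subset U$ where $B\cap U=C_X\cap U$, then $C_X$ coincides with $B$ near $X'$ (take $U'=U\cup(V\setminus C_X)$, which contains $X'$ and on which $B$ and $C_X$ agree since $B\subset C_X$), so $C_{X'}=C_X$ by uniqueness. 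This avoids all boundary-regularity considerations. Both proofs finish the same way, gluing over the connected $\U$ and using Lemma~\ref{l: two cylinders with the same generatrix coincide}; one small point in your write-up is that the containment $B\subset(B\cap X)+Y_0$ is really what the \emph{proof} of Lemma~\ref{l: local kakutani is if and only if} (or rather Lemma~\ref{l: locality of Kakutani condition} applied directly to $C_X$ and $B$) gives you, not its statement, which does not track the generatrix.
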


\begin{proof}
First we show that for every $X\in\U$,
a $k$-cylinder that coincides with $B$ near~$X$
is unique.
Indeed, suppose that for some $X\in\U$
there are two such cylinders, $C_1$ and $C_2$,
and observe that $C_1$ and $C_2$ coincide near~$X$.
By Lemma~\ref{l: locality of Kakutani condition} applied
to $C_2$ in place of $B_2$ and $C_1$ in place of both $B_1$ and $C$,
it follows that $C_2\subset C_1$.
Similarly $C_1\subset C_2$, hence $C_1=C_2$,
showing the desired uniqueness.

Let $C_X$ denote the above unique cylinder.
Pick $X\in\U$ and let $U\subset V$ be a neighborhood of $X$
such that $B\cap U=C_X\cap U$
(see Definition~\ref{d: coincide near and locally cylindrical}).
Since $B$ is compact, there exists a neighborhood $\U_X\subset\U$ of~$X$
such that $B\cap X'\subset U$ for all $X'\in\U_X$.
Then $C_X$ and~$B$ coincide near $X'$ for every $X'\in\U_X$.
Hence $C_{X'}=C_X$, by the above uniqueness applied to $X'$ in place of~$X$.

Thus the map $X\mapsto C_X$ is locally constant and hence constant on $\U$.
Denote the constant cylinder $C_X$ by~$C$
and observe that $B\cap X=C_X\cap X=C\cap X$
for every $X\in\U$.
\end{proof}

The next lemma provides a convenient reformulation
of the conclusion of Theorem \ref{t: local Blaschke--Kakutani}.

\begin{lemma}\label{l: relaxed theorem conclusion}
Let $\U\subset\Gr_k V$ be a nonempty connected open set.
Suppose that for every $X\in\U$ at least one of the following holds:
\begin{enumerate}
  \item $B$ is locally cylindrical near $X$;
  \item $B \cap X$ is an ellipsoid.
\end{enumerate}
Then the conclusion of Theorem \ref{t: local Blaschke--Kakutani}
holds for $B$ and~$\U$.
\end{lemma}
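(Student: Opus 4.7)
The plan is to partition $\U = \U_1 \cup \U_2$, where $\U_i = \{X \in \U : \text{option }(i)\text{ holds at }X\}$, and to treat separately the cases $\U_2 = \U$ and $\U_2 \ne \U$. The key preliminary, and the main technical hurdle, is to show that $\U_1$ is open in $\Gr_k V$.

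For openness, I would fix $X_0 \in \U_1$ with associated cylinder $C = (B \cap X_0) + Y_0$ and open $U_0 \supset X_0$ on which $B = C$; Lemma~\ref{l: locality of Kakutani condition} applied with $B_1 = C$, $B_2 = B$ yields $B \subset C$ globally. For $X' \in \Gr_k V$ sufficiently close to $X_0$ one has $X' \cap Y_0 = 0$ together with $B \cap X', C \cap X' \subset U_0$, whence $C \cap X' = B \cap U_0 \cap X' = B \cap X'$. Then $U' := U_0 \cup (V \setminus C)$ is open, contains $X'$ (since $X' \cap B \subset U_0$ while $X' \setminus B \subset V \setminus C$ because $X' \cap C = B \cap X' \subset B$), and satisfies $B = C$ on $U'$ (trivially on $V \setminus C$ using $B \subset C$, and on $U_0$ by assumption). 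Hence $X' \in \U_1$.

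If $\U_2 = \U$, then $\res{\Phi}{X}$ is an inner product norm for each $X \in \U$, so Proposition~\ref{p: ellipses only assemble into ellipsoid} yields a nonnegative definite quadratic form $Q$ on $V$ with $\Phi^2 = Q$ on every $X \in \U$. The set $B' = \{v \in V : Q(v) \le 1\}$ then satisfies $B \cap X = B' \cap X$ for all $X \in \U$, giving conclusion~(2) of Theorem~\ref{t: local Blaschke--Kakutani}.

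Otherwise I fix $X_0 \in \U \setminus \U_2$, so $B \cap X_0$ is not an ellipsoid and hence $X_0 \in \U_1$. Let $\U'$ be the connected component of $\U_1$ containing $X_0$; it is open as a component of the open set $\U_1$. By Lemma~\ref{l: propagation of local cylindricity} there is a single cylinder $C$ with some generatrix $Y_0$ such that $B \cap X = C \cap X$ for every $X \in \U'$, and each such section is linearly equivalent, via $\res{\pr^{Y_0}_{X_0}}{X}$, to the non-ellipsoidal base $B \cap X_0$. To prove that $\U'$ is closed in $\U$, I take $X^* \in \overline{\U'} \cap \U$ and $X_n \in \U'$ with $X_n \to X^*$: the uniform boundedness of $C \cap X_n = B \cap X_n \subset B$, combined with the identity $C \cap X_n = (\res{\pr^{Y_0}_{X_0}}{X_n})^{-1}(B \cap X_0)$, forces $X^* \cap Y_0 = 0$ (otherwise the inverse projection would blow up and the preimage of a small ball inside $B \cap X_0$ would have diameter tending to infinity). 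Continuity then gives $B \cap X^* = C \cap X^*$, which is linearly equivalent to the non-ellipsoidal base and thus not an ellipsoid, so $X^* \notin \U_2$ and hence $X^* \in \U_1$; a small connected open neighborhood of $X^*$ in $\U_1$ that meets $\U'$ places $X^*$ in $\U'$. Since $\U$ is connected and $\U'$ is clopen and nonempty in it, $\U' = \U$, and $B' := C$ satisfies $B \cap X = B' \cap X$ for every $X \in \U$, giving conclusion~(1) of Theorem~\ref{t: local Blaschke--Kakutani}.
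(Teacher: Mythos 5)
Your proof is correct, and it reaches the same two alternatives through the same main tools (Proposition~\ref{p: ellipses only assemble into ellipsoid} for the all-ellipsoid case, Lemma~\ref{l: propagation of local cylindricity} plus a connectedness argument for the cylindrical case), but the decomposition you choose costs you an extra step that the paper's choice avoids. The paper partitions $\U$ by whether $B\cap X$ is an ellipsoid and works with the set $\U'$ of \emph{non}-ellipsoidal sections, which is open for free because being an ellipsoid is a closed condition; a connected component of $\U'$ is then automatically open and the propagation lemma applies directly. You instead work with $\U_1$, the set where $B$ is locally cylindrical, and therefore have to prove that $\U_1$ is open. Your openness argument (passing from $B\cap U_0 = C\cap U_0$ and $B\subset C$ to the open set $U' = U_0\cup(V\setminus C)$) is correct, but it essentially reproves the local-constancy step already contained inside the proof of Lemma~\ref{l: propagation of local cylindricity}, which you then invoke anyway, so this portion is redundant. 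On the other hand, your closedness argument is more explicit than the paper's: where the paper states tersely that ``linear equivalence is a closed condition'' to rule out an ellipsoidal section on $\overline{\U_0}\cap\U$, you directly show via the boundedness of $B\cap X_n \subset B$ that the limiting plane $X^*$ remains complementary to the generatrix $Y_0$, hence $B\cap X^* = C\cap X^*$ is a genuine cross-section of the cylinder and thus linearly equivalent to the non-ellipsoidal base. That is a cleaner justification of exactly the point the paper glosses over (the linear equivalences $\pr^{Y_0}$ could a priori degenerate in the limit, and ruling that out is precisely what makes the ``closed condition'' claim valid). So: correct, same skeleton, one redundant step, one step made more rigorous.
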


\begin{proof}
If all intersections $B \cap X$ are ellipsoids then
$\res{\Phi}{X}$ is an inner product norm for every $X\in\U$
(recall that all ellipsoids in this paper are 0-centered).
Then Proposition \ref{p: ellipses only assemble into ellipsoid}
implies that there exists a quadratic form $Q$ on $V$
such that $(\res{\Phi}{X})^2 = \res{Q}{X}$
or, equivalently, $B\cap X= B'\cap X$
for all $X\in\U$
where $B'=\{v\in V: Q(v)\le 1\}$.
This is the second option in the conclusion of
Theorem~\ref{t: local Blaschke--Kakutani}.

Now assume that some of the intersections $B\cap X$, $X\in\U$,
are not ellipsoids.
Let $\U_0$ be a connected component of the nonempty open set
$$
\U':=\{X\in U: \text{$B\cap X$ is not an ellipsoid} \} .
$$
By our assumptions $B$ is locally cylindrical near $X$
for every $X\in\U_0$.
By Lemma \ref{l: propagation of local cylindricity}
there exists a $k$-cylinder $C_0$ such that
$B\cap X = C_0\cap X$
for all $X\in\U_0$.
If $\U_0=\U$ then this is the first option in the conclusion
of Theorem~\ref{t: local Blaschke--Kakutani}.
It remains to rule out the case when $\U_0\ne\U$.

Suppose that $\U_0\ne\U$.
Then, since $\U$ is connected, there exists $X_1\in\U\setminus\U_0$
that belongs to the closure of $\U_0$.
Clearly $X_1\notin\U'$, hence $B\cap X_1$ is an ellipsoid.
On the other hand, all intersections of the form $B\cap X$ where $X\in\U_0$,
are linearly equivalent since they are compact cross-sections
of the same cylinder~$C_0$.
Since the linear equivalence is a closed condition,
it follows that $B\cap X_1$ is linearly equivalent to $B\cap X_0$ for any $X_0 \in \U_0$.
However $B\cap X_0$ is not an ellipsoid, a contradiction.
\end{proof}

The next lemma will allow us to reduce the theorems to
the codimension~1 case.

\begin{lemma}\label{l: locally cylindrical in subspaces}
Let $X\in\Gr_kV$ be such that for every $W\in\Gr_{k+1}(V,X)$
the intersection $B\cap W$ is locally cylindrical near~$X$.
Then $B$ is locally cylindrical near~$X$.
\end{lemma}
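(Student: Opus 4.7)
The plan is to build a candidate $(n-k)$-dimensional generatrix $Y \subset V$ from the $1$-dimensional generatrices supplied by the hypothesis, verify that $V = X \oplus Y$, and finally show that $B$ coincides with the cylinder $(B \cap X) + Y$ on a concrete neighborhood of~$X$.

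First, for each $W \in \Gr_{k+1}(V, X)$ the uniqueness argument from the proof of Lemma~\ref{l: propagation of local cylindricity} produces a unique $1$-dimensional subspace $\ell_W \subset W$, complementary to $X$ in $W$, such that $B \cap W$ coincides with $(B \cap X) + \ell_W$ near $X$ inside $W$. I would set $Y := \sum_{W} \ell_W$ and check that $V = X \oplus Y$. The projection $Y \to V/X$ hits every line $W/X$, so it is surjective and $\dim Y \ge n - k$. Compactness of $\pd(B \cap X)$ inside the cylindricity neighborhood of $X$ in $W$ yields a thickness $r_W > 0$ with $\pd(B \cap X) + \{y \in \ell_W : |y| \le r_W\} \subset \pd B$, which forces $\ell_W$ to lie in the tangent direction of the face of $B$ at every $p \in \pd(B \cap X)$. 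A nonzero $v \in Y \cap X$ would then place a short flat segment in direction $v$ at every point of $\pd(B \cap X)$, making $B \cap X$ unbounded. Thus $Y \cap X = 0$ and $\dim Y = n - k$.

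The crucial step will be to establish a uniform thickening $\pd(B \cap X) + \{y \in Y : |y| \le \epsilon_0\} \subset \pd B$ for some $\epsilon_0 > 0$. The main obstacle is that I do not expect the per-plane thicknesses $r_W$ to admit a uniform positive lower bound as $W$ varies over $\Gr_{k+1}(V, X)$. My plan is to bypass this by forming the closed convex set $D := \bigcap_{p \in \pd(B \cap X)} (B - p) \cap Y$ inside $Y$. It contains every segment $\{y \in \ell_W : |y| \le r_W\}$, and since these segments jointly span $Y$, their convex hull is a symmetric convex subset of $Y$ of full dimension, so $0$ lies in its interior and $D$ contains a ball $\{y \in Y : |y| \le \epsilon_0\}$. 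This already gives $p + y \in B$ for every $p \in \pd(B \cap X)$ and every such $y$; to promote it to $p + y \in \pd B$, I would combine it with the observation that the function $t \mapsto \Phi(p + ty)$ is convex, takes the value~$1$ at $t = 0$, and equals~$1$ on an interval around~$0$ by cylindricity of $B \cap (X + \R y)$, which by convexity forces $\Phi(p + ty) \ge 1$ for all $t$.

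Once this uniform face control is in place, the rest is routine convex bookkeeping. The set of $p \in X$ satisfying $p + y \in B$ for every $y \in Y$ with $|y| \le \epsilon$ is convex and, for sufficiently small $\epsilon > 0$, contains both $\pd(B \cap X)$ and~$0$, hence contains their convex hull $B \cap X$, giving $(B \cap X) + \{y \in Y : |y| \le \epsilon\} \subset B$. Setting $U := X + \{y \in Y : |y| < \epsilon\}$ and $C := (B \cap X) + Y$, the inclusion $C \cap U \subset B \cap U$ is then immediate. For the reverse direction, the homogeneity identity $\Phi(x + y) = \Phi(x)$ (valid whenever $\Phi(x) \ge 1$ and $|y| < \Phi(x) \epsilon_0$, obtained by applying the uniform face control at $x/\Phi(x) \in \pd(B \cap X)$) would force $\Phi(x) \le 1$ for any $v = x + y \in B \cap U$ with $x \in X$ and $y \in Y$, so $x \in B \cap X$ and $v \in C$. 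The resulting equality $B \cap U = C \cap U$ is exactly local cylindricity of $B$ near~$X$.
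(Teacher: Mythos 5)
Your proposal follows the same blueprint as the paper's proof: extract a $1$-dimensional generatrix $\ell_W$ from the cylindricity of each slice $B\cap W$, assemble them into a candidate complement $Y$, verify $V = X\oplus Y$, and show that $B$ coincides with $(B\cap X)+Y$ near $X$. However, it takes a noticeably longer route than necessary, and one step contains a genuine gap.

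The gap is in the argument for $Y\cap X = 0$. You correctly establish that each $\ell_W$-segment $p + [-r_W y_W, r_W y_W]$ lies in $\pd B$ for every $p\in\pd(B\cap X)$, and you infer that $\ell_W$ is tangent to $\pd B$ at $p$. But you then assert that a nonzero $v\in Y\cap X$ (a \emph{linear combination} of the $y_W$'s) ``would place a short flat segment in direction $v$ at every point of $\pd(B\cap X)$''. This does not follow: at a non-smooth boundary point the tangent cone is not a linear space, so a linear combination of tangent directions need not itself be tangent, let alone produce a flat segment in $\pd B$. (Your own $D$-intersection device, which would produce such segments, cannot rescue this either, since the promotion to $\pd B$ uses the cylindricity of $B\cap(X+\R v)$ and collapses precisely when $v\in X$.) The correct route, and the one the paper uses, is to take a supporting hyperplane $H$ of $B$ at $p$: the symmetric segment $p + [-r_W, r_W]y_W\subset B$ forces $y_W$ to be parallel to $H$, hence $p+Y\subset H$ and $(p+Y)\cap\Int B = \varnothing$. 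This immediately rules out $v\in Y\cap X$ by evaluating at $p$ on the ray $\R_+v$ (since then $0 \in (p+\R v)\cap\Int B$), \emph{and} via Lemma~\ref{l: characterisations of cylindricity}(4) it delivers the global inclusion $B\subset (B\cap X)+Y$ for free.

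Once that inclusion is available, the remainder is much shorter than your uniform-thickening argument. The paper simply picks $n-k$ planes $W_1,\dots,W_{n-k}$ whose union spans $V$, defines $Y=\linspan\{v_i\}$, gets $V=X\oplus Y$ by a one-line dimension count, and observes that $(B\cap X)+U_0\subset B$ (for $U_0$ the cross-polytope on the $v_i$'s) by convexity. Combined with $B\subset(B\cap X)+Y$ this yields the local coincidence on $U = X+U_0$. Your detour through $D=\bigcap_p (B-p)\cap Y$ and the homogeneity identity $\Phi(x+y)=\Phi(x)$ is in principle viable once the supporting-hyperplane correction is made, but the uniformity concern you carefully work around does not arise in the paper's approach at all, precisely because the global inclusion $B\subset(B\cap X)+Y$ is proved first.
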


\begin{proof}
Let $K=B\cap X$.
Choose $W_1,\dots,W_{n-k}\in\Gr_{k+1}(V,X)$ such that
$\linspan\left(\bigcup_{i=1}^{n-k} W_i\right) = V$.
For each $i=1,\dots,n-k$,
since $B\cap W_i$ is locally cylindrical near $X$,
there exists $v_i\in W_i\setminus X$
such that 
$K + [-v_i,v_i] = B \cap (X + [-v_i, v_i])$
where $[-v_i,v_i]$ denotes the straight line segment
between $-v_i$ and~$v_i$.
Define 
$Y=\linspan\{v_i\}_{i = 1}^{n - k}$ 
and $C=K+Y$,
then $Y$ is a subspace of~$V$ complementary to~$X$
and $C$ is a $k$-cylinder with base~$K$. 

We show that $B \subset C$. 
Lemma~\ref{l: characterisations of cylindricity} 
implies that it is enough to check that 
$(p + Y) \cap \Int B = \varnothing$ 
for all $p \in \pd K$. 
Fix $p \in \pd K$ and let $H$ be a supporting hyperplane to $B$ at $p$. 
Then for any 
$i = 1, \ldots, n -k$ 
the line 
$l_i = H \cap \linspan\{p, v_i\}$ 
is a supporting line to 
$B_i = B \cap \linspan\{p, v_i\}$ 
at $p$. 
As $p + [-v_i, v_i] \subset \pd B_i$, 
we have $p + v_i \in l_i$. 
Therefore $p + v_i \in H$
for all 
$i = 1, \ldots, n -k$ 
hence 
$p + Y \subset H$ 
thus 
$(p + Y) \cap \Int B = \varnothing$.

Define a neighborhood $U_0$ of 0 in $Y$ by
$$
 U_0 = \left\{ \sum_{i=1}^{n-k} t_i v_i  :
 t_1,\dots,t_{n-k}\in\R, \ \sum|t_i|< 1 \right\}
$$
and let $U=X+U_0$.
The choice of $v_i$ and the convexity of $B$ imply that 
$K+U_0\subset B$.
Combining this with $B \subset C$ we obtain
$B\cap U \subset C \cap U = K + U_0 \subset B \cap U$
hence $B$ is locally cylindrical near~$X$.
\end{proof}

\section{Proof of Theorem \ref{t: local Blaschke--Kakutani} in dimension~3}
\label{s: proof in 3D}

In this section we prove Theorem \ref{t: local Blaschke--Kakutani}
for $n=3$ and $k=2$.
Let $V$, $B$, and $\U$ satisfy the assumptions
of Theorem~\ref{t: local Blaschke--Kakutani} with $n=3$ and $k=2$.
That is, $V$ is a 3-dimensional vector space,
$B\subset V$ is a convex body with zero in the interior,
$\U\subset\Gr_2V$ is a nonempty connected open set,
and for every $X\in\U$ there exists a line $L_X\subset\Gr_1V$
such that $B$ is contained in the 2-cylinder with base $B\cap X$
and generatrix $L_X$:
\begin{equation}\label{e: 3D cylinder assumption}
 B \subset (B\cap X) + L_X .
\end{equation}
Note that \eqref{e: 3D cylinder assumption} implies that
$L_X$ is complementary to $X$, otherwise the set on
the right-hand side would be two-dimensional and could not contain~$B$.

We fix the above notation and assumptions for the rest of this section
and denote by~$\Phi$ the Minkowski norm associated to~$B$.

\medskip

To facilitate understanding, we first sketch the proof
in the case of the classical Kakutani criterion (that is, $\U=\Gr_2V$)
for a smooth, strictly convex, 0-symmetric body~$B$.
In this case one can define a continuous bijection $\psi\colon\Gr_1V\to\Gr_2V$
as follows: for $L\in\Gr_1V$,
$\psi(L)$ is the plane from $\Gr_2V$  parallel to the tangent
planes of $\pd B$ at the points of $L\cap\pd B$.
The Grassmannians $\Gr_1V$ and $\Gr_2V$ can be regarded as
real projective planes, which are dual to each other in the sense
that points in one projective plane correspond to lines in the other.
Indeed, a plane $X\in\Gr_2V$ corresponds to the set
$ \{ L\in\Gr_1 V : L \subset X \} $,
which is a line of the projective structure of~$\Gr_1V$,
and a line $L\in\Gr_1V$ corresponds to the set
$ \{ X\in\Gr_2 V : L \subset X \} $,
which is a line of the projective structure of~$\Gr_2V$.

The assumption of the Kakutani criterion implies that the above map $\psi$
sends projective lines of $\Gr_1V$ to projective lines of $\Gr_2V$.
Indeed, for every $X\in\Gr_2V$ and $L\in\Gr_1X$ the plane $\psi(L)$
must contain the line $L_X$ from~\eqref{e: 3D cylinder assumption},
thus $\psi$ sends the projective line of $\Gr_1X$ corresponding to $X$
to the projective line of $\Gr_2X$ corresponding to $L_X$.

By the fundamental theorem of projective geometry it follows that
$\psi$ is a projective map, that is, $\psi$ is induced by a linear
bijection $F \colon V \to V^*$ between the 3-dimensional vector spaces
with projectivizations $\P(V) = \Gr_1V$ and $\P(V^*) = \Gr_2V$.
In the latter case the projectivization
is given by the map 
$V^*\setminus\{0\}\to\Gr_2V, \; f\mapsto\ker f$.
To summarize, we obtain a linear bijection $F\colon V\to V^*$ such that
$\psi(\R v)=\ker(F(v))$ 
for all 
$v \in V \setminus \{0\}$.
It is not hard to show that such a linear parametrization of tangent
directions of $\pd B$ is possible only if $B$ is an ellipsoid.
(This last step is detailed in Lemma~\ref{l: 3D ellipsoid case} below).

\medskip

We now turn to the general case of Theorem~\ref{t: local Blaschke--Kakutani} for $n=3$ and $k=2$. 
The difference of the proof from the sketch above is that we have to tackle the cylindrical case
(see Lemma~\ref{l: 3D degenerate case} below)
and in the non-cylindrical case it is more natural to construct the projective dual of $\psi$.
The proof is composed of several lemmas.

\begin{lemma}\label{l: unique line}
For every $X\in\U$ there is a unique line $L_X\in\Gr_1(V)$
satisfying \eqref{e: 3D cylinder assumption}.
\end{lemma}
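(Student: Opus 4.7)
The plan is to argue by contradiction. Suppose that for some $X_0 \in \U$ there exist distinct lines $L_1, L_2 \in \Gr_1 V$ both satisfying $B \subset (B \cap X_0) + L_i$. Set $K = B \cap X_0$, $Y = L_1 + L_2 \in \Gr_2 V$, and $M = X_0 \cap Y$; the latter is one-dimensional by the dimension formula in $V = \R^3$.

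The first step is a local rigidity observation. At any $p \in \pd K$ where $B$ admits a unique supporting hyperplane $T_p$ in $V$, Lemma~\ref{l: characterisations of cylindricity}(4) yields $(p + L_i) \cap \Int B = \varnothing$, so each $p + L_i$ lies in some supporting hyperplane of $B$ at $p$; smoothness forces both supporting hyperplanes to coincide with $T_p$, giving $L_1 + L_2 \subseteq T_p - p$ and hence $T_p - p = Y$ by dimension. Intersecting with $X_0$, the unique supporting line of $K$ at $p$ inside $X_0$ equals $M$, a fixed direction independent of $p$.

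To derive the contradiction, I exploit openness of $\U$. Consider the relatively open subset $\pd B \cap \Omega \subset \pd B$, where $\Omega := \bigcup_{X \in \U} X \setminus \{0\}$; since $\Phi$ is differentiable almost everywhere, smooth-for-$B$ points have full $2$-dimensional measure in $\pd B \cap \Omega$. A Fubini-type argument over the $2$-parameter family of curves $\{\pd K_X\}_{X \in \U}$ produces a plane $X_1 \in \U$ arbitrarily close to $X_0$, chosen from the pencil of planes through the line $M$, such that $\pd K_{X_1}$ contains a smooth-for-$B$ point $p_1$ close to a smooth-for-$B$ point of $\pd K$. Let $L_1' \in \Gr_1 V$ be a valid line for $X_1$, guaranteed by $X_1 \in \U$. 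Applying the rigidity observation at $p_1$ and passing to the limit $X_1 \to X_0$, one finds that $L_1'$ must converge to a line inside $Y$ while remaining complementary to $X_1$. An explicit first-order computation of the projection $\pr^{L_1'}_{X_1}(B)$ then shows that it overshoots $K_{X_1}$ in a direction transverse to $Y$, contradicting the defining property $\pr^{L_1'}_{X_1}(B) \subseteq K_{X_1}$.

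The main obstacle I anticipate is the first-order overshoot computation: one must rule out that a clever choice of $L_1'$ within the constrained family (near lines in $Y$, complementary to $X_1$) avoids the overshoot. The argument should use that the convex curve $\pd K \subset X_0$ has tangent direction different from $M$ on an arc of positive length---for otherwise $\pd K$ would degenerate to a segment parallel to $M$, contradicting $K$ being a genuine two-dimensional convex body---and combine this with the rigid direction of the tangent planes $T_p = p + Y$ along $\pd K$ to force the projection to escape $K_{X_1}$ in first order of the perturbation parameter.
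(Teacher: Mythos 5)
Your strategy has two genuine gaps, and the first one is fatal to the plan as stated.

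\textbf{The rigidity observation may be vacuous.} Your first step establishes that at every smooth-for-$B$ point $p\in\pd K$ the tangent plane of $B$ is $p+Y$ and the tangent line of $K$ is $p+M$. This is correct, but there is no guarantee that $\pd K$ contains \emph{any} smooth-for-$B$ point, and the hypothesis $L_1\ne L_2$ in fact pushes the other way. At any smooth-for-$K$ point $p\in\pd K$ whose tangent line $l$ is \emph{not} in the direction $M$, the two planes $l+L_1$ and $l+L_2$ are distinct (one checks $l+L_1=l+L_2$ iff $l-p=M$), and both are supporting planes of $B$ at~$p$ by Lemma~\ref{l: characterisations of cylindricity}(4). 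So $p$ is a ridge point of $\pd B$. Since a planar convex body can have tangent direction $M$ on at most two closed arcs, the set of such ridge points has positive measure in $\pd K$, while your rigidity confines smooth-for-$B$ points of $\pd K$ to two arcs at most. The Fubini argument then gives you smooth-for-$B$ points $p_1$ on nearby cross-sections $\pd K_{X_1}$, but not ``close to a smooth-for-$B$ point of $\pd K$''; without an anchor on $\pd K$ itself, the tangent planes at $p_1$ need not converge to anything related to $Y$, and the conclusion that $L_{X_1}$ tends to a line in $Y$ does not follow.

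\textbf{The overshoot is not a contradiction.} Even granting that $L_{X_1}\to L_\infty\subset Y$ with $L_\infty\ne M$ (so $L_\infty$ complementary to $X_0$), this only says $X_0$ would acquire yet another valid generatrix inside $Y$, which is consistent with the hypothesized situation; it does not contradict $\pr^{L_{X_1}}_{X_1}(B)\subset B\cap X_1$. You flag the ``first-order overshoot computation'' as the anticipated obstacle, but as far as I can see the convergence itself produces no clash, so the missing computation is not a technicality but the entire argument.

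For contrast, the paper's proof turns the non-smoothness you are fighting against into the engine: pick a smooth-for-$K$ point $p$ with tangent $l$ not in direction $M$, trap $B$ in the wedge $H_1\cap H_2$ bounded by $l+L_1$ and $l+L_2$, and then for any $X'\in\U$ through $p$ with $X'\ne X_0$, observe that if $L_{X'}\not\subset X_0$ then $\pr^{L_{X'}}_{X'}$ is an isomorphism on $X_0$ fixing $p$, so $p$ is a smooth point of $\pr^{L_{X'}}_{X'}(K)\subset B\cap X'$, contradicting the corner of the wedge in $X'$. Hence $L_{X'}\subset X_0$; letting $X'\to X_0$ yields a generatrix inside $X_0$, an immediate contradiction. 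This works at abundant smooth-for-$K$ points and makes no use of smoothness of $\pd B$ at all.
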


\begin{proof}
Suppose to the contrary that for some $X \in \U$ there exist two distinct lines $L_1 \neq L_2$ such that $B \subset K + L_i$, $i=1,2$,
where $K=B\cap X$.
Note that both $L_1$ and $L_2$ are complementary to $X$.

Pick a smooth point $p$ of $\pd K$ such that the
supporting line $l$ of $K$ at $p$ is not contained
in the plane $p+L_1+L_2$.
Then $l + L_1$ and $l+ L_2$ are two distinct supporting
planes of $B$ at~$p$.
Hence $B\subset H_1\cap H_2$ where $H_1$ and $H_2$
are closed half-spaces of $V$
bounded by $l+L_1$ and $l+L_2$ respectively.

Pick a plane $X'\in\U$ such that $p\in X'$ and $X'\ne X$.
Let $L=L_{X'}\in\Gr_1V$ be a line satisfying \eqref{e: 3D cylinder assumption}
for $X'$.
By Lemma~\ref{l: characterisations of cylindricity} we have
\begin{equation}\label{e: unique line 1}
B \cap X' = \pr_{X'}^L(B) \supset \pr_{X'}^L(K) .
\end{equation}
Assume for a moment that $L\not\subset X$.
Then the restriction of $\pr_{X'}^L$ to $X$
is a linear isomorphism between $X$ and~$X'$.
Since $\pr_{X'}^L(p)=p$, it follows that $p$
is a smooth point of $\pr_{X'}^L(K)$.
This and \eqref{e: unique line 1} imply that
$p$ is a smooth point of $B \cap X'$.
On the other hand, $B\cap X'$ is contained in the set
$X'\cap H_1\cap H_2$, which is a non-straight solid angle
with vertex at~$p$.
Hence $p$ is not a smooth point of $B\cap X'$, a contradiction.

This contradiction shows that the assumption $L\not\subset X$ is false.
Thus for every plane $X'\in\U$ such that $p\in X'$ and $X'\ne X$,
one has $L_{X'}\subset X$ (for any choice of $L_{X'}$).
Pick a sequence $\{X_i\}$ of such planes converging to~$X$.
For every~$i$ we have a line $L_{X_i}\subset X$
satisfying \eqref{e: 3D cylinder assumption} for $X_i$.
Passing to a subsequence if necessary we may assume
that the lines $L_{X_i}$ converge to some line $L_0\subset X$,
then $B\subset (B\cap X)+L_0$ by continuity.
However, the set $(B\cap X)+L_0$ is two-dimensional,
a contradiction.
This finishes the proof of Lemma~\ref{l: unique line}.
\end{proof}

With the help of Lemma~\ref{l: unique line},
we can now define a map $\varphi \colon \U \to \Gr_1(V)$
by $\varphi(X)=L_X$ where $L_X$ satisfies \eqref{e: 3D cylinder assumption}.
Since \eqref{e: 3D cylinder assumption} is a closed condition
on a pair $(X,L)\in\Gr_2V\times\Gr_1V$,
the uniqueness of~$L_X$ implies that $\varphi$ is continuous.
We fix the notation $\varphi$ for the rest of this section.

In the next lemma we handle the degenerate case when $\varphi$ is not injective.

\begin{lemma}\label{l: 3D degenerate case}
Suppose that the above map $\varphi$ is not injective.
Then there exists a 2-cylinder $C\subset V$
such that $B\cap X = C\cap X$ for all $X\in \U$.
\end{lemma}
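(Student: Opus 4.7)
The plan is to produce a single cylinder $C$ that meets $B$ in the same cross-section for every plane of $\U$. By the non-injectivity of $\varphi$, fix distinct $X_1, X_2 \in \U$ with $L := L_{X_1} = L_{X_2}$. Setting $K_i = B \cap X_i$, Lemma~\ref{l: two cylinders with the same generatrix coincide} gives $K_1 + L = K_2 + L$; call this common cylinder $C$. Then $B \subset C$ and it suffices to prove $B \cap X = C \cap X$ for every $X \in \U$.

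The heart of the argument is a slab of coincidence between $X_1$ and $X_2$. Let $l\colon X_1 \to L$ be the linear map sending $p$ to the unique element of $L$ with $p + l(p) \in X_2$. For $p \in \pd K_1$, Lemma~\ref{l: characterisations of cylindricity}(4) says the line $p + L$ avoids $\Int B$, so $s_p := (p + L) \cap B$ is a segment of $\pd B$. Since $X_2$ is also $L$-contracting, the partner point $p_2 := p + l(p)$ lies in $\pd K_2 \subset \pd B$, so $[p, p_2] \subset s_p \subset \pd B$. For $p \in \Int K_1$ both endpoints of $[p, p+l(p)]$ lie in $B$, hence by convexity the whole segment does. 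Thus $B$ contains the closed slab
$$
S := \{ p + t\, l(p) : p \in K_1,\ t \in [0, 1]\},
$$
and since $S \subset C$ trivially, $B$ and $C$ agree on $S$.

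Now set $\U' := \{X \in \U : B \cap X = C \cap X\}$. It contains $X_1, X_2$ and is closed in $\U$ by the Hausdorff-continuous dependence of $B \cap X$ and $C \cap X$ on $X$. To show $\U'$ is open, consider $X_0 \in \U'$ and a nearby plane $X$. The boundary $\pd(C \cap X_0) = \pd(B \cap X_0)$ is parameterized by $p \in \pd K_1$ via $q_0(p) := (p + L) \cap X_0$, and $q_0(p) \in s_p$. For $X$ close to $X_0$, the analogous point $q_X(p) := (p + L) \cap X$ moves slightly along the line $p + L$; provided $q_0(p)$ lies in the relative interior of $s_p$, the perturbed point stays inside $s_p \subset B$. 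Then $\pd(C \cap X) \subset B$, and convexity of $B$ and of $C \cap X$ forces $C \cap X \subset B$, so $X \in \U'$.

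The remaining subtlety, which I expect to be the main obstacle, is that $q_0(p)$ may sit at an endpoint of $s_p$ precisely when $X_0 \in \{X_1, X_2\}$. To bypass this, I first run the openness argument at a plane $X_0$ lying strictly between $X_1$ and $X_2$ in the pencil through $\ell := X_1 \cap X_2$: there $q_0(p)$ lies strictly between $p$ and $p_2$, hence in the relative interior of $s_p \supset [p, p_2]$, so the argument applies cleanly and shows that $\U'$ contains a two-dimensional open neighborhood of $X_0$. This supplies many additional $L$-contracting planes, and applying the slab construction with a fresh pair of them chosen to straddle $X_1$ (respectively $X_2$) in a pencil transverse to the original one extends the ruled segments $s_p$ past $X_1$ and $X_2$, yielding openness of $\U'$ at those points too. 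Since $\U$ is connected and $\U'$ is then both open and closed and nonempty, $\U' = \U$, and $C$ is the desired cylinder.
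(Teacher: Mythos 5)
Your slab construction and the observation that $S \subset B$ are correct, but the openness argument has a genuine gap at the two points of $\ell \cap \pd K_1$, where $\ell := X_1 \cap X_2$. For $p \in \ell$ one has $l(p)=0$, so the segment $[p, p_2]$ collapses to the single point $\{p\}$, regardless of where $X_0$ sits in the pencil. Thus the claim that ``$q_0(p)$ lies strictly between $p$ and $p_2$, hence in the relative interior of $s_p$'' fails precisely there: you know only that $p \in s_p$, and $s_p = (p+L)\cap B$ may well be the single point $\{p\}$. Consequently, for a plane $X$ near $X_0$ that does not contain $\ell$, the boundary point $q_X(p)$ moves off $p$ along $p+L$ and you have no control on whether it remains in $B$. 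Your diagnosis that the problem occurs only when $X_0 \in \{X_1,X_2\}$ is therefore incorrect; it occurs at every $X_0$ in the pencil, for these two exceptional values of $p$.

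The proposed repair is circular. Before openness is established, the only planes you have placed in $\U'$ lie in the one pencil through $\ell$, so any ``fresh pair'' drawn from them still intersects in $\ell$; there is no transverse pencil available. Since $\U$ is a two-dimensional open subset of $\Gr_2 V$ while your slab argument only controls a one-parameter family of planes, the argument never escapes the pencil, and $\U'$ is never shown to be open. The paper's proof sidesteps exactly this degeneration by choosing $p$ to be an \emph{extreme point} of $B\cap X_0$ lying \emph{outside} $\ell$; then for any plane $X$ through the interior of the segment $(p, \pr^{L_0}_{X_1} p)$, the assumption $\varphi(X) \ne L_0$ produces a nondegenerate segment of $B\cap X_0$ containing $p$ in its relative interior, contradicting extremality. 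This forces $\varphi \equiv L_0$ on all planes through $p$, and using two non-collinear such points gives the needed two-dimensional openness of $\varphi^{-1}(L_0)$.
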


\begin{proof}
Fix a line $L_0\in\Gr_1V$ having more than one $\varphi$-preimage.
For a point $p \in V$ define 
$$
\U_p = \{X \in \U \colon p \in \U\}
$$

\begin{claim}
Let $X_0,X_1\in\U$ be such that $X_0 \ne X_1$
and $\varphi(X_0) = \varphi(X_1) = L_0$.
Then there exists a point $p\in (\pd B\cap X_0)\setminus X_1$
such that $\varphi(X) = L_0$ for all $X \in \U_p$.
\end{claim}

\begin{proof}
Recall that every compact convex set in a finite-dimensional
vector space is a convex hull of its extreme points.
This implies that there exists an extreme point of $B\cap X_0$
outside the line $X_0 \cap X_1$. Let $p$ be such a point.

Let $q = \pr^{L_0}_{X_1} p$.
Then $q\ne p$ as $p\notin X_1$,
and $q\in B$ by Lemma~\ref{l: characterisations of cylindricity}.
Consider the set
$$
 \U_{pq} = \{ X\in\U :
 \text{$X\cap (p,q)$ is a single point} \}
$$
where $(p,q)$ denotes the open line segment between $p$ and~$q$.
We claim that $\varphi(X)=L_0$ for all $X\in\U_{pq}$.
Suppose to the contrary that 
$\varphi(X) = L \ne L_0$ 
for some $X\in\U_{pq}$.
Consider the points $p'=\pr_X^L(p)$
and $q'=\pr_X^L(q)$.
Since $p,q\in B$,
Lemma~\ref{l: characterisations of cylindricity}
implies that $p',q'\in B\cap X$.
The assumption that $L\ne L_0$ implies that $p'\ne q'$.
Thus $(p',q')$ is a nontrivial open line segment in $B\cap X$.
Moreover $(p',q')$ contains the intersection point of $X$ and $(p,q)$
since this point is preserved by~$\pr_X^L$.
Now consider points $p''=\pr_{X_0}^{L_0}(p')$ and $q''=\pr_{X_0}^{L_0}(q')$.
They belong to $B\cap X_0$
by Lemma~\ref{l: characterisations of cylindricity},
they are distinct since $p'\ne q'$ and $L_0$ is complementary to~$X$,
and we have $p\in(p'',q'')$ since $(p',q')$ contains a point
from $(p,q)\subset L_0$.
This contradicts the choice of $p$ as an extreme point of $B\cap X_0$.
This contradiction shows that 
$\varphi(X)=L_0$ for all $X\in\U_{pq}$.
Then Claim follows by continuity as every plane from $\U_p$ can be approximated by planes from $\U_{pq}$.
\end{proof}

Fix $X_0, X_1 \in \U$ such that $X_0 \ne X_1$ and
$\varphi(X_0) = \varphi(X_1) = L_0$.
Let $p\in (\pd B\cap X_0)\setminus X_1$
be a point provided by Claim.
Applying Claim to $X_0$ and any plane from $\U_p\setminus\{X_0\}$
we obtain another point $p'\in(\pd B\cap X_0)\setminus\R p$
such that $\varphi(X) = L_0$ for all $X \in \U_{p'}$.

Let $C=(B\cap X_0)+L_0$.
For every $X\in \U_p\cup\U_{p'}$ we have $\varphi(X)=L_0$
and hence $B\subset (B\cap X)+L_0$ by the definition of~$\varphi$.
This and
Lemma~\ref{l: two cylinders with the same generatrix coincide}
imply that $(B\cap X)+L_0=C$ and hence $B\cap X=C\cap X$
for all $X\in \U_p\cup\U_{p'}$.
Thus $B\cap U = C\cap U$
where $U\subset V$ is the union of all planes from $\U_p\cup\U_{p'}$.

Since $\R p\ne \R p'$,
$U$ contains an open neighborhood of $X_0\setminus 0$
and every plane $X\in\U$ sufficiently close to $X_0$
is contained in~$U$.
For every such plane $X\subset U$ we have
$$
 B\cap X = B\cap U \cap X = C\cap U \cap X =C\cap X,
$$
therefore $(B\cap X)+L_0 = C$
and hence $\varphi(X)=L_0$.
Thus $X_0$ has a neighborhood in $\U$ where $\varphi$ is constant.
Since $X_0$ is an arbitrary element of $\varphi^{-1}(L_0)$,
it follows that $\varphi^{-1}(L_0)$ is an open set.
Since $\U$ is connected, this implies that $\varphi$ is constant,
thus $\varphi(X)=L_0$ for all $X\in\U$.

Now  Lemma~\ref{l: two cylinders with the same generatrix coincide}
applied to $X_0$ and any $X\in\U$
implies that $(B\cap X)+L_0 = C$ and hence $B\cap X= C\cap X$,
finishing the proof of Lemma~\ref{l: 3D degenerate case}.
\end{proof}

Lemma \ref{l: 3D degenerate case} implies Theorem~\ref{t: local Blaschke--Kakutani}
in the case when $\varphi$ is not injective.
Now we consider the case when $\varphi$ is injective.

\begin{lemma}\label{l: collinearity}
Let $X_1,X_2,X_3\in\U$ be distinct planes
containing a common line $\ell\in\Gr_1V$.
Then the lines $\varphi(X_1),\varphi(X_2),\varphi(X_3)$
are contained in one plane from $\Gr_2V$.
\end{lemma}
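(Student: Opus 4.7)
The plan is to exploit a smooth boundary point of $B$ lying on the common line~$\ell$. Suppose first that $p \in \ell \cap \pd B$ is a smooth point of $\pd B$. Since $\ell \subset X_i$, we have $p \in \pd B \cap X_i$ for each $i = 1, 2, 3$, and Lemma~\ref{l: characterisations of cylindricity}(4) gives that each line $p + \varphi(X_i)$ is disjoint from $\Int B$. At a smooth point, every line through $p$ disjoint from $\Int B$ must lie in the unique tangent hyperplane $H$ of $B$ at $p$. Hence each $\varphi(X_i)$ is contained in the $2$-dimensional linear subspace $H - p$ of $V$, and the three lines $\varphi(X_1), \varphi(X_2), \varphi(X_3)$ are coplanar.

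The main obstacle is that both points of $\ell \cap \pd B$ may fail to be smooth points of $\pd B$, so no such $p$ need exist on $\ell$. To handle this, recall that the non-smooth points of $\pd B$ form a set of $2$-dimensional Hausdorff measure zero on the surface $\pd B$ (being contained in the non-differentiability set of the convex function $\Phi$). Pulling back via the radial homeomorphism $S^2 \to \pd B$, $v \mapsto v / \Phi(v)$, we obtain a measure-zero set $\hat N \subset S^2$, so $\hat N \cap (-\hat N)$ has measure zero as well. Consequently the set of ``bad'' lines $\ell' \in \Gr_1 V$ for which \emph{both} intersection points of $\ell'$ with $\pd B$ are non-smooth has measure zero in $\Gr_1 V$, so ``good'' lines (those whose intersection with $\pd B$ contains at least one smooth point) are dense.

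Given our triple, fix vectors $v_i \in X_i \setminus \ell$ and pick a sequence of good lines $\ell_m \to \ell$. Set $X_i^m = \linspan\{\ell_m, v_i\}$; then $X_i^m \to X_i$ as $m \to \infty$. For large $m$, each $X_i^m$ belongs to $\U$ (by openness) and the three $X_i^m$ are pairwise distinct, since distinctness is an open condition on triples of $2$-planes and the limit triple $(X_1, X_2, X_3)$ is distinct. The argument of the first paragraph applies to $(X_1^m, X_2^m, X_3^m)$ with common line $\ell_m$ and yields coplanarity of $\varphi(X_1^m), \varphi(X_2^m), \varphi(X_3^m)$. Since coplanarity of three lines is a closed condition in $(\Gr_1 V)^3$ and $\varphi$ is continuous, passing to the limit $m \to \infty$ delivers the desired coplanarity for the original triple $(X_1, X_2, X_3)$.
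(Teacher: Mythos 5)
Your proof is correct and follows the same two-step strategy as the paper: first use the unique supporting plane at a smooth point of $\ell\cap\pd B$, then extend to the general case by approximating $\ell$ with lines meeting $\pd B$ in a smooth point and passing to the limit via continuity of $\varphi$ and closedness of coplanarity. You simply spell out the density and approximation details (measure-zero bad lines, the explicit construction $X_i^m=\linspan\{\ell_m,v_i\}$) that the paper compresses into ``follows by continuity.''
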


\begin{proof}
First consider the case when $\ell\cap\pd B$ contains
a smooth point $p$ of~$\pd B$.
Let $T$ be the unique supporting plane of $B$ at~$p$.
By Lemma \ref{l: characterisations of cylindricity},
for every $j\in\{1,2,3\}$
the straight line $p+\varphi(X_j)$ does not intersect $\Int B$
and hence, by the smoothness of $B$ at~$p$,
this line is contained in~$T$.
Therefore the lines $\varphi(X_1),\varphi(X_2),\varphi(X_3)$
are contained in the plane from $\Gr_2V$ parallel to~$T$.
This proves the lemma in the case when $\ell\cap\pd B$ contains
a smooth point of~$\pd B$.

The general case follows by continuity, since smooth points
are dense in $\pd B$ and any triple of planes  $X_1,X_2,X_3\in\Gr_2V$
with $\ell=X_1\cap X_2\cap X_3\in\Gr_1V$
can be approximates by similar configurations
where intersection lines contain smooth points of~$\pd B$.
\end{proof}

Each of the Grassmannians $\Gr_1V$ and $\Gr_2V$ carries
a natural structure of a real projective plane
as explained in the beginning of this section.

Lemma \ref{l: collinearity} says that $\varphi$ preserves
collinearity with respect to these projective structures:
it sends any three collinear points of $\U\subset\Gr_2V$
to three collinear points of $\Gr_1V$.
We use the following generalization of
the fundamental theorem of projective geometry.

\begin{prop}[\cite{LTWZ}*{Theorem 3.2}]
\label{p: collineation is projective}
Let $U\subset\mathbb{RP}^2$ be a connected open set
and ${\varphi\co U\to\mathbb{RP}^2}$ an injective map
such that for any three collinear points of $x,y,z\in U$
their images $\varphi(x),\varphi(y),\varphi(z)$ are also collinear.
Assume that the image $\varphi(U)$ contains three non-collinear points.
Then $\varphi$ is the restriction of a projective map.
\end{prop}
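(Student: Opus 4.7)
The plan is to reduce Proposition~\ref{p: collineation is projective} to the classical fundamental theorem of projective geometry (FTPG) by normalizing $\varphi$ via a projective transformation and then propagating fixity throughout $U$ by incidence-based constructions of von Staudt/Steiner type.

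To begin, I would pick four points $p_1,p_2,p_3,p_4\in U$ in general position (no three collinear) such that the images $q_i:=\varphi(p_i)$ are also in general position. Such a quadruple exists: three points in $U$ with non-collinear images are produced from the hypothesis that $\varphi(U)$ contains three non-collinear points, together with injectivity and collinearity preservation; a fourth point in general position can be chosen using openness of~$U$. By the classical FTPG there is a unique projectivity $\Psi\co\RP^2\to\RP^2$ with $\Psi(q_i)=p_i$ for each $i$. Replacing $\varphi$ by $\Psi\circ\varphi$, it suffices to prove that an injective, collinearity-preserving map $U\to\RP^2$ that fixes four points in general position is the identity on~$U$.

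Next, observe that each line $\ell_{ij}=\linspan\{p_i,p_j\}$ is mapped into itself by $\varphi$ on its intersection with~$U$, because $\varphi$ preserves collinearity and fixes the~$p_i$. The standard Steiner/von Staudt construction locates any point $P\in\RP^2$ via a finite chain of ``join two known points by a line'' and ``intersect two known lines at a point'' operations starting from the reference quadruple. For points $P\in U$ sufficiently close to the reference frame, the entire chain can be performed while keeping all auxiliary points inside~$U$ by openness, and each step is preserved by~$\varphi$ since it is defined purely in terms of incidence. This yields $\varphi(P)=P$ on an open neighborhood $W\subset U$ of, say,~$p_4$.

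To promote this to $\varphi=\id$ on all of~$U$, I would consider the fixed-point set $F=\{P\in U\co\varphi(P)=P\}\supset W$. Near any $P_0\in F$ in general position with $p_1,p_2,p_3$ one repeats the construction using $p_1,p_2,p_3,P_0$ as the new reference quadruple, showing that $F$ is open in~$U$. Closedness of~$F$ in~$U$ follows once one extracts continuity of~$\varphi$ from the combinatorial hypotheses: an injective collinearity-preserving map on a planar open set that sends a non-collinear triple to a non-collinear triple sends small open ``constructible'' cells to open cells, which suffices for continuity. Since $U$ is connected and $F$ is a nonempty clopen subset, we get $F=U$, which translates back to $\varphi=\Psi^{-1}|_U$ for the original~$\varphi$.

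The main obstacle is ensuring that every Steiner construction (and its variants used to spread fixity from nearby $P_0$) can be carried out with all intermediate points inside the open set~$U$, and simultaneously extracting enough regularity of $\varphi$ from the purely combinatorial hypotheses to close the set~$F$. The global FTPG avoids both difficulties by working on all of~$\RP^2$; the local version forces one to keep careful track of which incidences are available inside~$U$ and how to chain them without leaving~$U$.
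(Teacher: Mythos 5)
Note first that the paper does not prove this proposition at all: it is quoted verbatim as \cite{LTWZ}*{Theorem 3.2} and used as a black box, so there is no in-paper proof to compare your attempt against. Your sketch must therefore be judged on its own.

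The overall strategy---normalize by a projectivity fixing a frame, then propagate fixity via incidence constructions and an open-and-closed argument on the fixed-point set---is indeed the right kind of approach. However, the sketch elides precisely the two points that make the local statement a genuine theorem rather than a routine corollary of the global one. First, the claim that ``for points $P\in U$ sufficiently close to the reference frame, the entire chain can be performed while keeping all auxiliary points inside $U$ by openness'' is unjustified and is in fact the crux. The von Staudt constructions for addition and multiplication on a coordinate line require auxiliary lines whose pairwise intersections spread over a region that does not obviously shrink as $P$ approaches the frame, and in any case a finite incidence chain starting from a fixed quadruple only produces points with \emph{rational} coordinates in the associated projective frame; reaching an arbitrary $P$ requires either a limiting argument (which presupposes the continuity you have not yet established) or an argument that the induced partial operations on a coordinate line form a partial field homomorphism of $\R$ that is forced to be the identity. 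Neither is carried out. Second, the continuity step---``sends small open `constructible' cells to open cells, which suffices for continuity''---is circular as stated. In the classical global FTPG, continuity is not assumed but \emph{derived}: the collineation induces a field automorphism of $\R$, which must preserve squares, hence positivity, hence order, hence must be the identity. Localizing this chain (a partial field homomorphism of $\R$ defined only on a small interval but still preserving squares is order-preserving there, hence monotone, hence continuous, hence linear) is exactly the substance of the local theorem, and it does not follow from vague appeals to constructible cells. So: right strategy in outline, but the two technical pillars separating the local from the global statement are asserted rather than proved, and as written the argument would not constitute a proof.
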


If the map $\varphi\colon\U\to\Gr_1V$ defined above
is injective, then it satisfies the assumptions of
Proposition~\ref{p: collineation is projective}.
Indeed, $\varphi$ preserves collinearity by
Lemma~\ref{l: collinearity},
and its continuity and injectivity imply that
the image $\varphi(U)$ is not contained
in one projective line of $\Gr_1V$, hence $\varphi(U)$
contains three non-collinear points.
Thus there exists a projective map 
$\widetilde\varphi\colon\Gr_2V \to\Gr_1V $
such that 
$\res{\widetilde\varphi}{\U}=\varphi$. 

We now construct a dual projective map $\psi\colon \Gr_1 V \to \Gr_2 V$.
Pick $L\in\Gr_1V$ and consider the set $P_L=\{X\in\Gr_2V: L\subset X\}$.
It is a projective line in $\Gr_2V$,
hence its image $\widetilde\varphi(P_L)$ is a projective line in~$\Gr_1V$.
This means that
$\widetilde\varphi(P_L)=\Gr_1(P'_L)$ for some plane $P'_L\in\Gr_2V$.
We define $\psi(L)=P'_L$.
This yields an injective map $\psi\colon \Gr_1 V \to \Gr_2 V$
uniquely characterized by the property
\begin{equation}\label{e: dual projective map}
 L\subset X  \iff
 \widetilde\varphi(X) \subset \psi(L)
 \qquad \text{for all $L\in\Gr_1V$ and $X\in\Gr_2V$} .
\end{equation}
In particular \eqref{e: dual projective map}
implies that $\psi$ preserves collinearity,
therefore it is a projective map.
Hence $\psi$ is the projectivization of
some linear bijection $F\colon V\to V^*$
(recall that $\Gr_1V = \P(V)$ and $\Gr_2V = \P(V^*)$).
This means that
\begin{equation}\label{e: ker F(p)}
 \ker(F(p)) = \psi(\R p) \quad \text{ for all } p\in V\setminus\{0\}
\end{equation}

\begin{lemma}\label{l: F gives support planes}
Assume that $\varphi$ is injective and let $F\colon V\to V^*$ be as above.
Let $p\in\pd B$ be a point contained in at least one plane from~$\U$.
Then $p+\ker(F(p))$ is a supporting plane of $B$ at~$p$.
\end{lemma}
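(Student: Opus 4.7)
The plan is to first handle the case when $p$ is a smooth point of $\pd B$ and then obtain the general case by approximation. Throughout write $\wt T_p$ for the linear $2$-subspace parallel to a supporting hyperplane at a smooth point $p$, so that the tangent hyperplane is $p+\wt T_p$.

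First I would record the following standard fact: at a smooth point $p\in\pd B$, a line $p+\R v$ avoids $\Int B$ if and only if $v\in\wt T_p$. The ``only if'' direction is what is needed. If $\Phi$ denotes the Minkowski norm of $B$, the function $t\mapsto\Phi(p+tv)$ is convex, equals $1$ at $t=0$, and is $\ge 1$ everywhere by hypothesis; smoothness of $\pd B$ at $p$ means $\Phi$ is differentiable at $p$, so the derivative of this function vanishes at $0$, giving $v\in\ker d\Phi_p=\wt T_p$.

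Next, I would prove the lemma at a smooth $p\in\pd B$ lying in some plane $X_0\in\U$. Let $P_{\R p}\subset\Gr_2V$ be the projective line of planes containing $\R p$, and set $\U_p=\U\cap P_{\R p}$; this is a nonempty open subset of $P_{\R p}\cong\RP^1$. For every $X\in\U_p$, the dual characterization~\eqref{e: dual projective map} gives $\varphi(X)\subset\psi(\R p)$, while Lemma~\ref{l: characterisations of cylindricity}(4) applied to $p\in\pd B\cap X$ and $Y=\varphi(X)$ shows that $p+\varphi(X)$ avoids $\Int B$, so the preceding fact forces $\varphi(X)\subset\wt T_p$. Since $\wt\varphi$ is a projective isomorphism $\Gr_2V\to\Gr_1V$, its restriction to $P_{\R p}$ is a projective isomorphism onto $\Gr_1(\psi(\R p))$; hence $\varphi(\U_p)$ is a nonempty open arc of $\Gr_1(\psi(\R p))\cong\RP^1$ and in particular contains two distinct lines through $0$. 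Since these two lines are both contained in the $2$-plane $\wt T_p\cap\psi(\R p)$, we conclude $\wt T_p=\psi(\R p)$, i.e.\ $T_p=p+\psi(\R p)$, as required.

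For a general $p\in\pd B$ lying in some $X\in\U$, I would argue by approximation. Smooth points are dense in $\pd B$, so choose smooth points $p_n\in\pd B$ with $p_n\to p$, fix $w\in X\setminus\R p$, and set $X_n=\linspan\{p_n,w\}$; then $X_n\to X$ in $\Gr_2V$ and $X_n\in\U$ for all large $n$ by openness of $\U$. The preceding step shows that $p_n+\psi(\R p_n)$ is a supporting hyperplane of $B$ at $p_n$ for each such $n$. By continuity of $\psi$ these affine $2$-planes converge to $p+\psi(\R p)$, and since disjointness from the open set $\Int B$ is preserved under such limits, $p+\psi(\R p)$ is a supporting hyperplane of $B$ at $p$.

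The main obstacle is the smooth case: one must combine the projective fact that $\varphi$ sweeps an open arc of lines inside $\psi(\R p)$ with the convex-geometric consequence of smoothness (every such line lies in $\wt T_p$) to force the two a priori different $2$-planes $\wt T_p$ and $\psi(\R p)$ to coincide. Once this is in place, the density/limit step is essentially routine.
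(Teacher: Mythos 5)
Your proof is correct and follows essentially the same route as the paper's: at a smooth point $p$ you show that the lines $\varphi(X)$ for $X\in\U$ through $p$ lie both in the tangent direction $\wt T_p$ and in $\psi(\R p)$, use injectivity of $\varphi$ (via the projective isomorphism $\wt\varphi$) to produce two distinct such lines, conclude $\wt T_p=\psi(\R p)=\ker(F(p))$, and finish the general case by density of smooth points and a limit argument. The paper does the same thing slightly more economically by just picking two distinct planes $X_1,X_2\in\U$ through $p$ instead of invoking the whole open arc $\varphi(\U_p)$.
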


\begin{proof}
First assume that $p$ is a smooth point of $\pd B$. Let $T$
be the unique supporting plane of $B$ at~$p$
and $T_0\in\Gr_2V$ the plane parallel to~$T$ through the origin.
Pick distinct $X_1,X_2\in\U$ such that $p\in X_1\cap X_2$.
Similarly to the proof of Lemma~\ref{l: collinearity},
we have $p+\varphi(X_i)\subset T$ and hence $\varphi(X_i)\subset T_0$
for $i=1,2$.
On the other hand, \eqref{e: dual projective map} implies that
$\varphi(X_i)\subset\psi(X_1\cap X_2)$ for $i=1,2$.
Since there is only one plane containing $\varphi(X_1)$ and $\varphi(X_2)$,
it follows that $\psi(X_1\cap X_2)=T_0$.
This and \eqref{e: ker F(p)} imply that $\ker(F(p))=T_0$.
Thus $p+\ker(F(p))=p+T_0=T$.

We have shown that the assertion of the lemma holds in the case
when $p$ is a smooth point of~$\pd B$.
The general case follows by continuity.
\end{proof}

The next lemma, together with
Proposition~\ref{p: ellipses only assemble into ellipsoid},
proves Theorem~\ref{t: local Blaschke--Kakutani} for $n=3, k=2$
in the case when $\varphi$ is injective.

\begin{lemma}\label{l: 3D ellipsoid case}
Suppose that $\varphi$ is injective.
Then $B\cap X$ is an ellipse for every $X\in\U$ (recall that all ellipses in this paper are $0$-centered).
\end{lemma}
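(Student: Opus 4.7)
To prove that $B\cap X$ is an ellipse for every $X\in\U$, the plan is to first introduce the linear map $T_X\colon X\to X^*$, $p\mapsto F(p)|_X$, and check that it is a linear isomorphism. Injectivity is the only nontrivial point: $T_X(p)=0$ for some $p\ne 0$ would force $X=\ker F(p)$ (both being $2$-planes in $V$), and after rescaling $p$ to a point of $\pd B\cap X$, Lemma~\ref{l: F gives support planes} would produce $p+X$ as a supporting plane of $B$ at $p$, contradicting $0\in\Int B$ because $-p\in X$ gives $0=p+(-p)\in p+X$.

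Next, decompose $T_X=S+A$ into symmetric and antisymmetric parts and set $Q_X(p):=T_X(p)(p)=S(p)(p)$, a quadratic form on $X$. By Lemma~\ref{l: F gives support planes}, for each smooth point $p\in\pd B\cap X$ (these are dense in $\pd(B\cap X)$) the tangent line to $\pd(B\cap X)$ at $p$ is $p+\ker T_X(p)$, so any smooth parametrization $\gamma(t)$ of an arc of $\pd(B\cap X)$ satisfies $T_X(\gamma)(\gamma')=0$. Differentiating $Q_X\circ\gamma$ via the bilinearity of $(p,q)\mapsto T_X(p)(q)$ and using the symmetry/antisymmetry of $S$ and $A$ yields
\[
 \frac{dQ_X(\gamma(t))}{dt}=-2A(\gamma(t))(\gamma'(t)).
\]

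In dimension two, $A(u)(v)=\lambda\,\omega(u,v)$ for a fixed area form $\omega$ on $X$ and some $\lambda\in\R$. Integrating the displayed identity once around the closed convex curve $\pd(B\cap X)$ and invoking the area formula $\oint\omega(\gamma,d\gamma)=2\,\mathrm{Area}(B\cap X)$ gives $0=\oint dQ_X=-4\lambda\cdot\mathrm{Area}(B\cap X)$, forcing $\lambda=0$. Thus $T_X$ is symmetric, $dQ_X/dt\equiv 0$ on smooth arcs, and continuity of $Q_X$ promotes this to $Q_X\equiv c$ on all of $\pd(B\cap X)$ for some $c>0$ (positivity follows from $Q_X(0)=0$ and $0\in\Int(B\cap X)$). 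Hence $B\cap X=\{p\in X:Q_X(p)\le c\}$, and boundedness of this sublevel set forces $Q_X$ to be positive definite, making $B\cap X$ a $0$-centered ellipse.

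The only real subtlety is that $\pd(B\cap X)$ need not be everywhere smooth, but this is benign: smooth points are dense on any convex curve, the function $Q_X$ is globally continuous and so returns to its initial value over any closed loop regardless of corners, and Green's theorem remains valid for rectifiable Jordan curves.
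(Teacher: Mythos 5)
Your proof is correct, but it finishes by a genuinely different argument than the paper's. Both begin the same way: set $f(p)=\res{F(p)}{X}\in X^*$ and deduce from Lemma~\ref{l: F gives support planes} that $f\colon X\to X^*$ is an injective linear map with $p+\ker f(p)$ a supporting line of $K=B\cap X$ at each $p\in\pd K$. The paper then dualizes $f$ via an area form $\omega$ to a nonvanishing \emph{linear vector field} $W$ on $X$ tangent to $\pd K$ (defined by $\omega(W(p),q)=f(p)(q)$) and invokes \cite{IMN23}*{Lemma 3.4} as a black box. You instead decompose the bilinear form $(p,q)\mapsto f(p)(q)$ into symmetric and antisymmetric parts $S+A$, kill the antisymmetric coefficient $\lambda$ by integrating the tangency relation around $\pd K$ via Green's theorem, and conclude that $\pd K$ lies on the conic $\{S(p)(p)=c\}$. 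This is, in effect, a self-contained re-derivation of the linear case of the cited lemma: your condition $\lambda=0$ is exactly the condition $\tr W=0$, and your area integral is the divergence theorem applied to $W$. The trade-off is that you avoid the black-box reference at the cost of some extra computation. Two minor points deserve cleanup. First, the parenthetical that $c>0$ ``follows from $Q_X(0)=0$'' is not a valid deduction ($Q_X(0)=0$ holds for any quadratic form and carries no sign information about $c$); but the conclusion survives because a closed Jordan curve such as $\pd K$ contained in the conic $\{Q_X=c\}$ forces $Q_X$ to be definite and $c\neq 0$, since lines, pairs of lines, hyperbolas, parabolas, or a point cannot contain such a curve, and a Jordan curve inside an ellipse must be the whole ellipse. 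Second, rather than appealing to density of smooth points to justify the a.e.\ derivative computation, it is cleaner to observe --- as the paper does --- that the continuous family of supporting lines $p\mapsto p+\ker f(p)$ already forces $\pd K$ to be a $C^1$ curve, after which no measure-theoretic care is needed.
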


\begin{proof}
Fix $X\in\U$ and let $K=B\cap X$.
For each $p\in X$, define $f(p)\in X^*$ by $f(p)=\res{F(p)}{X}$,
where $F$ is the map from Lemma~\ref{l: F gives support planes}.
Then $f\co X\to X^*$ is a linear map.
Applying Lemma~\ref{l: F gives support planes} to $p\in\pd K$
we obtain that $f(p)\ne 0$ and
$p+\ker(f(p))$ is a supporting line of $K$ at~$p$.

Among other things this implies that there is a way to
continuously assign a supporting line to each point of~$\pd K$.
This is possible only if $\pd K$ is a $C^1$ curve
and the Minkowski norm $\Phi$ is $C^1$ away from~0.
Now for every $p\in\pd K$, the line $p+\ker(f(p))$
is the tangent line of $\pd K$ at~$p$.

We turn this family of supporting lines into a linear vector
field $W$ as follows.
Fix a nonzero skew-symmetric bilinear form $\omega$ on $X$,
and for each $p\in X$ let $W(p)\in X$ be the unique vector satisfying
$$
\omega(W(p), q) = f(p)(q) \text{ for all } q \in X.
$$
Then $W\colon X\to X$ is a non-degenerate linear map,
and for every $p\in\pd K$ we have $W(p) \in \ker(f(p))$,
hence the direction of $W(p)$ is the tangent direction of $\pd K$ at~$p$.
We have therefore obtained a linear vector field $W$ on $X$ that is tangent to $\pd K$.
It is well-known that the existence of such a vector field
implies that $K$ is an ellipse, see e.g. \cite{IMN23}*{Lemma 3.4}.
\end{proof}

We now compose the proof of Theorem~\ref{t: local Blaschke--Kakutani}
for $n=3$ and $k=2$ from the results of this section.
For $V$, $B$, $\U$ as in the theorem,
define a continuous map $\varphi\co\U\to\Gr_1V$ as explained after
Lemma~\ref{l: unique line}.
Then there are two cases: either $\varphi$ is injective or not.
If $\varphi$ is not injective then Lemma~\ref{l: 3D degenerate case}
implies that the alternative (1) of the conclusion
of Theorem~\ref{t: local Blaschke--Kakutani} takes place.
If $\varphi$ is injective then Lemma~\ref{l: 3D ellipsoid case}
implies that $B\cap X$ is an ellipse
and hence $\res{\Phi}{X}$ is a Euclidean norm for every $X\in\U$.
This and Proposition \ref{p: ellipses only assemble into ellipsoid}
imply that the alternative (2) of the conclusion
of Theorem~\ref{t: local Blaschke--Kakutani} takes place.
Thus Theorem~\ref{t: local Blaschke--Kakutani} holds for $n=3$ and $k=2$.

\section{Proof of Theorem \ref{t: local Blaschke--Kakutani} in higher dimensions}
\label{s: proof induction}

In this section we finish the proof of Theorem~\ref{t: local Blaschke--Kakutani}.
First we observe that for every fixed $n$ and~$k$
the statement of Theorem \ref{t: local Blaschke--Kakutani}
is equivalent to the following proposition.

\begin{prop} \label{p: local Kakutani}
Let $k\ge 2$ and $n\ge k+1$ be integers.
Let $V$ be an $n$-dimensional vector space,
$\Phi$ a Minkowski norm on $V$,
and $B$ the unit ball of $\Phi$.
Let $X_0 \in \Gr_k V$ be such that all
$k$-dimensional subspaces $X$ from a neighbourhood of $X_0$ in $\Gr_kV$
are $\Phi$-contracting (see Definition~\ref{d: contracting subspace}).
Then at least one of the following holds.
\begin{enumerate}
  \item $B$ is locally cylindrical near $X_0$
  (see Definition \ref{d: coincide near and locally cylindrical}).
  \item $B \cap X_0$ is an ellipsoid
  (recall that all ellipsoids in this paper are 0-centered).
\end{enumerate}
\end{prop}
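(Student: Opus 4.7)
The plan is induction on $n=\dim V$; the base case $n=3$, $k=2$ was proved in Section~\ref{s: proof in 3D}. Assume the proposition holds in all dimensions below $n\ge 4$. If $k\le n-2$, then every $W\in\Gr_{k+1}(V,X_0)$ has $\dim W=k+1<n$, and any $k$-plane of $W$ near $X_0$ in $\Gr_kW$ belongs to $\U$; the restriction of a $\Phi$-contracting projection onto such a $k$-plane to $W$ remains contracting in $(W,\Phi|_W)$. The induction hypothesis applied to $(W,\Phi|_W,X_0)$ therefore gives, for each such $W$, either $B\cap W$ locally cylindrical near $X_0$ in $W$, or $B\cap X_0$ an ellipsoid. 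The second alternative is independent of $W$, so if it occurs for any $W$ we have case~(2); if only the first alternative ever holds, Lemma~\ref{l: locally cylindrical in subspaces} gives case~(1).

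The codimension-one case $k=n-1$ requires a direct argument, since $\Gr_{k+1}(V,X_0)=\{V\}$. We follow the strategy of Section~\ref{s: proof in 3D}. A higher-dimensional extension of Lemma~\ref{l: unique line} shows that for generic $X\in\U$ the contracting direction $L_X$ is unique, yielding a continuous map $\varphi\co\U'\to\Gr_1V$ on an open subset $\U'\subset\U$ containing $X_0$. If $\varphi$ is not injective, the argument of Lemma~\ref{l: 3D degenerate case} carries over to show that $B$ is locally cylindrical near $X_0$, giving case~(1).

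Suppose instead that $\varphi$ is injective. For any 2-plane $\Pi\subset X_0$, form the 3-plane $W=\Pi+L_{X_0}$; then $W\cap X_0=\Pi$, and for each hyperplane $X\in\U'$ with $L_X\subset W$, the restriction $\pr_X^{L_X}|_W$ is a $\Phi|_W$-contracting projection onto the 2-plane $W\cap X\subset W$. By continuity and injectivity of $\varphi$, the preimage $\varphi^{-1}(\P(W))$ is a 2-parameter subfamily of $\U'$, producing an open family of $\Phi|_W$-contracting 2-planes in $W$ near $\Pi$, to which the already-proved 3-dimensional case of the proposition applies: either $B\cap W$ is locally cylindrical near $\Pi$ in $W$, or $B\cap\Pi$ is an ellipse. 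The first alternative is ruled out because a cylinder admits only one contracting direction for each complementary 2-plane, whereas our family realises distinct directions $L_X$ by injectivity of $\varphi$. Hence $B\cap\Pi$ is an ellipse for every 2-plane $\Pi\subset X_0$, and Proposition~\ref{p: ellipses only assemble into ellipsoid} applied to $\Phi|_{X_0}$ exhibits $B\cap X_0$ as an ellipsoid, giving case~(2).

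The main obstacle is the injective-$\varphi$ codimension-one step: one must verify that the map $X\mapsto W\cap X$ from $\varphi^{-1}(\P(W))$ to $\Gr_2W$ actually has open image around $\Pi$, so that the $\Phi|_W$-contracting 2-planes form an open family in $\Gr_2W$. This is an open-mapping or local injectivity property and ultimately relies on a higher-dimensional analogue of Lemma~\ref{l: collinearity}: for hyperplanes of $V$ through a common $(n-2)$-plane $\ell$, the contracting directions $L_X$ span a 2-plane, which is strictly stronger than their common containment in the $(n-1)$-dimensional tangent hyperplane direction of $\pd B$ at smooth points of $\ell\cap\pd B$ (the direct output of the tangent-hyperplane argument).
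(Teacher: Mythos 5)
Your reduction from $k\le n-2$ to the codimension-one case $k=n-1$ via Lemma~\ref{l: locally cylindrical in subspaces} matches the paper, as does the base case $n=3$. The divergence (and the gap) is in how $k=n-1$ is handled.

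You try to push the $n=3$ projective-map argument to higher dimensions: define $\varphi\colon\U'\to\Gr_1 V$ via unique contracting directions, split on injectivity, and in the injective case cut down to $3$-dimensional subspaces $W=\Pi+L_{X_0}$ and apply the $3$-dimensional result. This runs into the problem you yourself flag and do not resolve, and it is a genuine obstruction rather than a technicality. To get an \emph{open} family of $\Phi|_W$-contracting $2$-planes in $W$ you would need that the contracting directions $L_X$, for $X$ ranging over hyperplanes through a fixed $(n-2)$-plane, sweep out a $2$-plane; the tangent-hyperplane argument behind Lemma~\ref{l: collinearity} only places them in an $(n-1)$-dimensional subspace. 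Moreover, the way you rule out the cylindrical alternative for $B\cap W$ (``a cylinder admits only one contracting direction per complementary $2$-plane, whereas our family realises distinct directions by injectivity of $\varphi$'') confuses injectivity of $X\mapsto L_X$ with the uniqueness of $L_X$ over a \emph{fixed} $2$-plane $W\cap X$: several hyperplanes $X$ with $L_X\subset W$ can give the same $2$-plane $W\cap X$, and injectivity of $\varphi$ alone does not prevent all of them from sharing a contracting direction equal to the generatrix. So the injective branch of your codimension-one argument is not a proof.

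The paper avoids all of this by not extending the projective-geometry machinery beyond $n=3$. Instead it proves a self-contained algebraic fact (Lemma~\ref{l: dimension reduction}): if two distinct hyperplanes $X_1,X_2$ are $\Phi$-contracting with distinct contracting directions $L_1\ne L_2$, then the $(n-2)$-plane $X_1\cap X_2$ is itself $\Phi$-contracting, with contracting direction $L_1+L_2$ (the proof analyses the composition $\pr^{L_1}_{X_1}\circ\pr^{L_2}_{X_2}$). This yields a clean dichotomy at $X_0$: either every $(n-2)$-plane inside $X_0$ is $\Phi$-contracting — in which case the already-proved lower-dimensional Theorem~\ref{t: local Blaschke--Kakutani} applied inside $X_0$ forces $B\cap X_0$ to be an ellipsoid, with the cylindrical and degenerate options excluded exactly as in Remark~\ref{rem: nondegeneracy} — or some $W_0\subset X_0$ is not $\Phi$-contracting, in which case the contrapositive of Lemma~\ref{l: dimension reduction} forces nearby hyperplanes through $W_0$ (and then, via the open set $\Sigma$ of non-contracting $(n-2)$-planes, all hyperplanes in a neighbourhood of $X_0$) to share the contracting direction $L_0$, and Lemma~\ref{l: same cylinder} gives local cylindricity. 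You should replace your injective/non-injective split in codimension one with this dimension-reduction dichotomy; the projective route is specific to $n=3$.
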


To show that Theorem~\ref{t: local Blaschke--Kakutani}
is equivalent to Proposition~\ref{p: local Kakutani},
first observe that the assumptions on~$X$ in Theorem~\ref{t: local Blaschke--Kakutani}
and Proposition~\ref{p: local Kakutani} are equivalent
by Lemma~\ref{l: characterisations of cylindricity}.
The conclusion of Theorem~\ref{t: local Blaschke--Kakutani}
trivially implies that of Proposition~\ref{p: local Kakutani}.
Conversely, by Lemma~\ref{l: relaxed theorem conclusion}
the conclusion of Proposition~\ref{p: local Kakutani}
implies that of Theorem~\ref{t: local Blaschke--Kakutani}.

The proof of Proposition \ref{p: local Kakutani} occupies
the rest of this section.
We argue by induction with base $n=3$ and $k=2$ established
in section~\ref{s: proof in 3D}.
The induction step is based on the following lemma.

\begin{lemma} \label{l: dimension reduction}
Let $X_1,X_2\in\Gr_{n-1} V$ be two hyperplanes
and $L_1,L_2 \in \Gr_1 V$ two lines,
$X_1\ne X_2$ and $L_1\ne L_2$.
Suppose that $X_1$ and $X_2$ are $\Phi$-contracting
with contracting directions $L_1$ and $L_2$, resp.
(see Definition~\ref{d: contracting subspace}).

Then the subspace $X_1 \cap X_2 \in \Gr_{n-2} V$ is $\Phi$-contracting.
\end{lemma}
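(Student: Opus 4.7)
The plan is to construct a $\Phi$-non-increasing linear projection of $V$ onto $X_1\cap X_2$ by composing the two projections supplied by the hypothesis and then averaging their iterates. Set $P_i=\pr_{X_i}^{L_i}$ for $i=1,2$ and $P=P_2\circ P_1$. By Lemma~\ref{l: characterisations of cylindricity}, each $P_i$ satisfies $\Phi(P_i v)\le \Phi(v)$ for all $v\in V$, and hence the same holds for~$P$.

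The core step is to show
\[
\mathrm{Fix}(P) := \{v\in V : Pv=v\} = X_1\cap X_2.
\]
The inclusion $\supseteq$ is immediate, since any $v\in X_1\cap X_2$ is fixed by both $P_1$ and $P_2$. For the reverse inclusion, fix nonzero $l_i\in L_i$ and observe that $Pv=v$ forces $v\in \im P_2 = X_2$. Writing $P_1 v = v - \alpha l_1$ and $P_2(v-\alpha l_1)=v-\alpha l_1-\beta l_2$ for appropriate $\alpha,\beta\in\R$, the equation $Pv=v$ reduces to $\alpha l_1 + \beta l_2 = 0$. Since $L_1\ne L_2$, the vectors $l_1,l_2$ are linearly independent, so $\alpha=\beta=0$. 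Hence $P_1 v = v$, meaning $v\in X_1$, and combined with $v\in X_2$ this gives $v\in X_1\cap X_2$.

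Next I would invoke a standard Cesaro averaging argument. Since $P$ is $1$-Lipschitz for $\Phi$ and $V$ is finite-dimensional, the iterates $P^k$ are uniformly bounded, so every eigenvalue of $P$ has modulus at most~$1$ and the eigenvalue $1$ is semisimple (a nontrivial Jordan block would make $\|P^k\|$ unbounded). Therefore the averages $Q_N := \tfrac{1}{N}\sum_{k=0}^{N-1}P^k$ converge to the linear projection $Q$ of $V$ onto the $1$-eigenspace of $P$, which is $X_1\cap X_2$ by the previous step. Subadditivity and positive homogeneity of $\Phi$, together with $\Phi(P^k v)\le\Phi(v)$, yield $\Phi(Q_N v)\le\Phi(v)$, and passing to the limit gives $\Phi(Qv)\le\Phi(v)$. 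Thus $Q$ is a $\Phi$-non-increasing linear projector with image $X_1\cap X_2$; by Definition~\ref{d: contracting subspace}, this means exactly that $X_1\cap X_2$ is $\Phi$-contracting, with contracting direction $\ker Q$ (automatically $2$-dimensional and complementary to $X_1\cap X_2$).

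The genuine content lies in the fixed-point computation, which is where the hypothesis $L_1\ne L_2$ enters; the Cesaro convergence and the descent of the Lipschitz bound to the limit are routine facts of finite-dimensional linear algebra. A more naive approach would be to take $Y=L_1+L_2$ as a candidate contracting direction and try to show that $\pr_{X_1\cap X_2}^Y$ is $1$-Lipschitz, but this runs into trouble in degenerate configurations where $L_1+L_2$ fails to be complementary to $X_1\cap X_2$, which is precisely why the averaging argument, producing the correct direction $\ker Q$ intrinsically, is preferable.
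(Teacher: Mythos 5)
Your proof is correct and takes a genuinely different route. The paper works with $T=\pr^{L_1}_{X_1}\circ\pr^{L_2}_{X_2}$ (the composition in the opposite order to yours, which is immaterial) and is entirely concrete: it sets $Z=L_1+L_2$ and $W=X_1\cap X_2$, shows $Z\cap W=0$ by analyzing the action of $T$ on affine lines $p+L$ with $L=Z\cap X_1$, and then verifies directly that $\pr^Z_W$ is $\Phi$-non-increasing via a short case analysis on the scalar $\lambda\in[-1,1]$ by which $T$ acts on $L$ (a midpoint argument when $\lambda=-1$, convergence of iterates $T^m(p)$ when $|\lambda|<1$). Your Ces\`aro-mean argument packages the same three observations --- $\Phi$-non-expansiveness of $P$, the equality $\mathrm{Fix}(P)=W$, and boundedness of the iterates $P^k$ --- into the finite-dimensional mean ergodic theorem, which hands you the projector $Q$ onto $W$ and hence a valid contracting direction $\ker Q$ automatically. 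This is cleaner and sidesteps the complementarity verification entirely; what you give up is the explicit identification of the contracting direction as $L_1+L_2$, which the paper obtains but never subsequently uses. Your fixed-point computation plays exactly the role of the paper's observation that $T$ has no fixed points outside $W$, with the hypothesis $L_1\ne L_2$ entering at the same spot. Two small remarks: for the Ces\`aro means to converge you need every unit-modulus eigenvalue of $P$, not only $\lambda=1$, to be semisimple, but this follows from the same boundedness-of-iterates argument, so the gap is purely expository; and the ``degenerate configurations'' you worry about in your closing paragraph in fact never occur (the paper proves $Z\cap W=0$ always holds), though your method has the virtue of not needing to know that.
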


\begin{proof}
Define $W=X_1 \cap X_2$ and $Z=L_1+L_2$.
We are going to show that $W\cap Z=0$ and the projector $\pr^Z_W\colon V\to W$
does not increase~$\Phi$.

Consider the map 
$T=\pr^{L_1}_{X_1}\circ\pr^{L_2}_{X_2}$. 
Note that 
$\Phi(T(v)) \le \Phi(v)$ 
for all 
$v \in V$ 
as 
$\pr^{L_1}_{X_1}$ 
and 
$\pr^{L_2}_{X_2}$ 
do not increase $\Phi$.
We also have 
$T(V)\subset X_1$ 
and 
$\res{T}{W}=\id_W$.
Since $L_1\ne L_2$, 
$T$ has no fixed points outside~$W$.
Define $L=Z \cap X_1$ and note that $\dim L=1$.
By construction we have $T(v)-v\in Z$ for all $v\in V$,
therefore 
$T(L)\subset L$ 
and moreover 
$T(p+L)\subset p+L$
for every $p\in X_1$.
Pick 
$p \in X_1 \setminus W$ 
and consider the affine map
$\res{T}{p+L}$ 
from the line $p+L$ to itself.
This map cannot be a nontrivial translation of $p+L$ since $T$ does not increase $\Phi$ and 
sublevel sets of $\res{\Phi}{p + L}$ are bounded.
Therefore 
$\res{T}{p+L}$ 
has a fixed point,
hence 
$(p+L)\cap W\ne\varnothing$.
Thus $L\not\subset W$, $X_1=W\oplus L$, and $W\cap Z=0$.

Now we have a projector 
$\pr^Z_W$ 
and it remains to show that it does not increase $\Phi$.
Let $p$ be as above and 
$q=\pr^Z_W(p)$.
Note that $q$ is the unique intersection point of $p+L$ and~$W$ 
hence the unique fixed point of 
$\res{T}{p + L}$.
Since 
$T(L)\subset L$ 
and $T$ does not increase $\Phi$,
the restriction 
$\res{T}{L}$ 
is a multiplication by some 
$\lambda\in[-1,1]$, 
therefore 
$T(p) = T(p - q) + T(q) = \lambda(p - q) + q$.
If $\lambda = 1$ then 
$T(p) = p$ 
which contradicts our choice of $p$ as 
$p \notin W$ 
and $T$ has no fixed points outside of $W$.
If $\lambda=-1$ then 
$q=\frac{p+T(p)}{2}$
and then 
$\Phi(q)\le \Phi(p)$ 
since $\Phi$ is convex and 
$\Phi(T(p))\le\Phi(p)$.
If $|\lambda|<1$ then 
$q=\lim_{m \to \infty} T^m(p)$,
hence 
$\Phi(q)\le \Phi(p)$ 
since $T$ does not increase $\Phi$.

We have shown that 
$\Phi(\pr^Z_W(p))\le\Phi(p)$ 
for an arbitrary 
$p\in X_1\setminus W$.
Thus $\pr^Z_W$ does not increase $\Phi$ on~$X_1$.
Since 
$\pr^Z_W = \pr^Z_W \circ \pr^{L_1}_{X_1}$, 
it follows that $\pr^Z_W$ does not increase~$\Phi$ everywhere.
\end{proof}

\begin{proof}[Proof of Proposition~\ref{p: local Kakutani}]
Recall that Proposition~\ref{p: local Kakutani}
and Theorem~\ref{t: local Blaschke--Kakutani} are equivalent
for every fixed $n$ and~$k$.
The case $n=3$ is covered in section~\ref{s: proof in 3D},
so we assume that $n\ge 4$.
Arguing by induction, we assume that Propositions~\ref{p: local Kakutani}
and Theorem \ref{t: local Blaschke--Kakutani}
are proven for all $3\le n'<n$ in place of~$n$.

Let $V$, $\Phi$, $B$, $X_0$ be as in Proposition~\ref{p: local Kakutani}.
If $B\cap X_0$ is an ellipsoid then the second alternative
of Proposition~\ref{p: local Kakutani} takes place,
so we assume that $B\cap X_0$ is not an ellipsoid.

First assume that $n>k+1$.
For every $W\in\Gr_{k+1}(V,X_0)$
the assumptions of Proposition~\ref{p: local Kakutani}
are satisfied for $W$ in place of~$V$,
$\res{\Phi}{W}$ in place of $\Phi$, and $B\cap W$ in place of~$B$.
Since $B\cap X_0$ is not an ellipsoid,
the $(k+1)$-dimensional case of Proposition~\ref{p: local Kakutani}
implies that $B\cap W$ is locally cylindrical near~$X_0$
for every $W\in\Gr_{k+1}(V,X_0)$.
By Lemma \ref{l: locally cylindrical in subspaces}
it follows that $B$ is locally cylindrical near~$X_0$.
This finishes the proof
of Proposition~\ref{p: local Kakutani} for $n>k+1$.

Now assume that $n=k+1$.
Let $\U$ be a neighborhood of $X_0$ in $\Gr_{n-1}V$
such that all subspaces from $\U$ are $\Phi$-contracting.
We consider two cases.

{\bf Case 1:} All subspaces from $\Gr_{n-2} X_0$ are $\Phi$-contracting.
Then the $(n-1)$-dimensional case of
Theorem~\ref{t: local Blaschke--Kakutani} applies
to $X_0$, $B \cap X_0$, and $\Gr_{n-2} X_0$
in place of $V$, $B$, and $\U$, resp.,
and we conclude that $B \cap X_0$ is an ellipsoid.
(Other possibilities for $B'$ in Theorem~\ref{t: local Blaschke--Kakutani}
are excluded as explained in Remark~\ref{rem: nondegeneracy}).

{\bf Case 2:} There exists $W_0\in\Gr_{n-2} X_0$
that is not $\Phi$-contracting.
Define
$$
 \Sigma = \{ W\in \Gr_{n-2} V : \text{$W$ is not $\Phi$-contracting} \}
$$
Since being $\Phi$-contracting is a closed condition,
$\Sigma$ is an open subset of $\Gr_{n-2} V$.

Let $L_0\in\Gr_1V$ be a contracting direction for $X_0$
(see Definition \ref{d: contracting subspace}).
Pick $X_1\in\U$
such that $X_1\ne X_0$ and 
$X_1\cap X_0=W_0$.
Applying  Lemma \ref{l: dimension reduction} to
the hyperplanes $X_0$ and $X_1$ we conclude that
$L_0$ is the unique contracting direction for $X_1$,
otherwise $W_0$ would be $\Phi$-contracting.

Now consider the set
$$
 \U_0 = \{ X\in\U :\ X\ne X_1 \text{ and } X\cap X_1\in\Sigma \} .
$$
It is an open subset of $\U$ containing $X_0$.
For every $X\in\U_0$ we apply Lemma \ref{l: dimension reduction}
to $X$ and $X_1$ and conclude that contracting directions
for~$X$ and $X_1$ coincide
(since $X\cap X_1$ is not $\Phi$-contracting).
Thus all hyperplanes from $\U_0$ have the same contracting direction $L_0$,
and an application of Lemma~\ref{l: same cylinder}
shows that $B$ is locally cylindrical near~$X_0$.

Thus we have shown that in all cases one of the alternatives
from the conclusion of Proposition~\ref{p: local Kakutani}
holds for an arbitrary $X_0\in\U$.
This finishes the proof of Proposition \ref{p: local Kakutani}
and Theorem \ref{t: local Blaschke--Kakutani}.
\end{proof}

\section{Proof of Theorem \ref{t: local Banach}}
\label{sec: proof_local_Banach}

The proof of Theorem \ref{t: local Banach} is essentially
the same as that of the main result of~\cite{IMN23}
except that the use of the global Kakutani criterion
is replaced by an application of Theorem~\ref{t: local Blaschke--Kakutani}.
Below we go through the steps of the proof from~\cite{IMN23} for $k=3$
and fill out missing bits in the case $k=2$
(which was not considered in \cite{IMN23}).

We restate the key intermediate results from \cite{IMN23}
in the following two propositions.
The first one works in all dimensions and provides a special algebraic family
of tangent directions to~$\pd B$.

\begin{prop}[\cite{IMN23}*{Proposition 2.4} and \cite{IMN23}*{Remark 4.6}]
\label{prop:Rexists}
Let $V$ be a vector space,
$\dim V=k+1\ge 3$, $B\subset V$
a convex body containing 0 in its interior,
and $\U \subset \Gr_k V$ an open set.
Suppose that for every $X_1,X_2\subset\U$
the cross-sections $B\cap X_1$ and $B\cap X_2$
are linearly equivalent.

Then for almost every $X\in\U$ there exist
a vector $\nu\in V\setminus X$
and a linear map
$$
 R\co X^*\to\Hom(X,X)
$$
such that for every $\la\in X^*$
the linear operator $R_\la = R(\la)\co X\to X$ satisfies:
\begin{enumerate}
\item $\tr R_\la=0$.
\item For every $p\in \pd B\cap X$,
the vector $R_\la(p) + \la(p)\nu$ is tangent to $\pd B$ at~$p$.
\end{enumerate}
\end{prop}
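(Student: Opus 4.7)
The strategy is to produce $R$ and $\nu$ by differentiating a one-parameter family of linear equivalences between nearby cross-sections of $B$.

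\textbf{Parametrization.} Fix $X\in\U$ and an arbitrary vector $\nu_0\in V\setminus X$. A neighborhood of $X$ in $\Gr_k V$ is parametrized by $X^*$ via $\la\mapsto X_\la:=\{x+\la(x)\nu_0:x\in X\}$, with the canonical linear isomorphism $\iota_\la\co X\to X_\la$, $x\mapsto x+\la(x)\nu_0$. Pull $B\cap X_\la$ back to $X$ by $K_\la:=\iota_\la^{-1}(B\cap X_\la)$; then $\la\mapsto K_\la$ is a Hausdorff-continuous family of convex bodies in $X$ with $K_0=B\cap X$. The assumption of the proposition is precisely that each $K_\la$ is linearly equivalent to $K_0$, so the coset
\[
\mathcal M_\la:=\{M\in\GL(X):M(K_0)=K_\la\}
\]
of the compact Lie group $\operatorname{Aut}(K_0)=\mathcal M_0$ is nonempty for every $\la$ near $0$.

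\textbf{Differentiable selection, and the ``almost every''.} The main step is to choose a lift $M_\la\in\mathcal M_\la$ with $M_0=\id_X$ that is differentiable at $\la=0$. The assignment $\la\mapsto\mathcal M_\la$ is Lipschitz as a map from $X^*$ into the smooth quotient manifold $\GL(X)/\operatorname{Aut}(K_0)$, hence differentiable at almost every $\la$ by Rademacher's theorem. A Fubini-type argument, carried out in a single chart on $\U$, shows that the set of $X\in\U$ for which $\la=0$ is itself a differentiability point has full measure; at such $X$ one locally lifts and sets $R(\la):=\tfrac{d}{dt}\big|_{t=0}M_{t\la}\in\End(X)$, which is linear in $\la$. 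This measurable selection step, rather than anything geometric, is what forces the ``almost every $X$'' in the statement and is the main technical obstacle; the $\operatorname{Aut}(K_0)$-ambiguity precludes a canonical smooth lift.

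\textbf{Tangency and trace-free normalization.} Let $L_\la:=\iota_\la\circ M_\la$, a linear equivalence from $K_0$ to $B\cap X_\la$, so that the curve $t\mapsto L_{t\la}(p)$ lies in $\pd B$ for every $p\in\pd K_0$. Expanding,
\[
L_\la(p)=M_\la(p)+\la(M_\la(p))\nu_0=p+R(\la)(p)+\la(p)\nu_0+O(\la^2),
\]
since $\la(R(\la)(p))$ is quadratic in $\la$; differentiating in $t$ at $t=0$ shows that $R(\la)(p)+\la(p)\nu_0$ is tangent to $\pd B$ at $p$, giving condition~(2). To achieve~(1), let $w\in X$ be the unique vector with $\la(w)=\tr R(\la)$ for all $\la\in X^*$ (via $X^{**}=X$), and set $\nu:=\nu_0+w\in V\setminus X$. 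Changing the complement from $\R\nu_0$ to $\R\nu$ reparametrizes the same nearby subspaces by $\la\mapsto\la/(1+\la(w))=\la+O(\la^2)$, so the first-order operator becomes $R'(\la):=R(\la)-w\otimes\la$, where $(w\otimes\la)(p):=\la(p)w$. A direct check gives $\tr R'(\la)=\tr R(\la)-\la(w)=0$ and $R'(\la)(p)+\la(p)\nu=R(\la)(p)+\la(p)\nu_0$, so the tangency persists. Replacing $R$ by $R'$ and $\nu_0$ by $\nu$ completes the argument.
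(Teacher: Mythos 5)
The paper itself does not prove this proposition; it is stated as a direct citation of \cite{IMN23}*{Proposition~2.4 and Remark~4.6}, so there is no internal proof to compare against. Judged on its own, your argument captures the right overall strategy (which is indeed the one used in \cite{IMN23}): parametrize nearby $k$-planes by $X^*$, pull cross-sections back to a fixed $X$, use the linear equivalence hypothesis to obtain a family $M_\la$, differentiate at $\la=0$ to produce $R$, and deduce tangency from the curve $t\mapsto L_{t\la}(p)\in\pd B$. The trace-free normalization by replacing $\nu_0$ with $\nu=\nu_0+w$ is correct and the identity $R'(\la)(p)+\la(p)\nu=R(\la)(p)+\la(p)\nu_0$ checks out, so condition~(1) is handled cleanly.

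However, there is a substantial unproved claim at the center of the argument: that $\la\mapsto\mathcal M_\la$ is \emph{Lipschitz} as a map into $\GL(X)/\operatorname{Aut}(K_0)$. What you have for free is that $\la\mapsto K_\la$ is Hausdorff--Lipschitz (from convexity of $B$). To pass to Lipschitzness of the selection you need the orbit map $[M]\mapsto M(K_0)$ to have Lipschitz inverse on its image, i.e.\ a quantitative rigidity bound of the form $d_H(M(K_0),K_0)\ge c\cdot\dist(M,\operatorname{Aut}(K_0))$ for $M$ near $\id$. This is true but genuinely nontrivial: it ultimately rests on the fact that the only linear vector fields tangent to $\pd K_0$ are those generating $\operatorname{Aut}(K_0)$, and it requires a compactness argument to make uniform. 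This is exactly the kind of content that makes the cited \cite{IMN23}*{Proposition~2.4} a proposition rather than an observation, so asserting it in one line is a real gap. A closely related gap is the ``Fubini-type argument'' you invoke: changing the base plane $X$ changes $K_0$ and hence the target $\GL(X)/\operatorname{Aut}(K_0)$, so the family is not literally a single Lipschitz map with a moving differentiability point, and transferring ``a.e.\ $\la$ is a differentiability point'' into ``a.e.\ $X$ admits $R$'' needs a careful chart comparison that you gesture at but do not carry out. Finally, a cosmetic point: the error term in the expansion of $L_\la(p)$ is $o(\la)$, not $O(\la^2)$, since you only have differentiability at $\la=0$; this does not affect the conclusion.
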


The notion of tangency to $\pd B$ in Proposition~\ref{prop:Rexists}(2) is defined as follows:
A vector $v\in V$ is said to be tangent to~$\pd B$ at a point $p\in\pd B$
if for the Minkowski norm $\Phi$ associated to $B$
the function $t\mapsto \Phi(x+tv)$ has zero derivative at $t=0$.
One can see that this is equivalent to the property that
the tangent cone of $B$ at $p$ contains $\linspan\{v\}$.

The most important case in Proposition~\ref{prop:Rexists}(2) is
when $p\in\ker\la$. In this case the term $\la(p)\nu$ vanishes
and hence $R_\la(p)$ is tangent to~$\pd K$ at~$p$
where $K=B\cap X$ is the respective cross-section.
This property is a strong restriction on the pair $(K,R)$
and at least in dimensions $k=2,3$ we have the following.

\begin{prop}[cf.~\cite{IMN23}*{Proposition 2.5}]\label{prop:Rtangent}
Let $X$ be a vector space, $\dim X=k\in\{2,3\}$,
and let $K\subset X$ be a convex body with $0$ in its interior.
Let $R\co X^*\to\Hom(X,X)$ be a linear map such that
for every $\la\in X^*$ the map $R_\la=R(\la)$ satisfies
$\tr R_\la=0$ and
\begin{equation} \label{e:tangency assumption}
    \text{for every } p\in \pd K \cap \ker\la, \text{ the vector } R_\la(p) \text{ is tangent to }\pd K \text{ at~$p$}.
\end{equation}
Then $R_\la(p)$ is tangent to $\pd K$ at $p$
for \textbf{all} $p\in \pd K$ and $\la\in X^*$.
\end{prop}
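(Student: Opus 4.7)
The plan is to reformulate the tangency condition at smooth boundary points of $K$ as a linear identity in $\la$ and then to force the resulting scalar defect to vanish. First I would recall that $\pd K$ is smooth at almost every point, and at such a smooth point $p$ the tangent cone is the hyperplane $\ker \ell_p$, where $\ell_p := dF_p$ and $F := \Phi^2$ is the squared Minkowski norm of $K$; by continuity of $R_\la(p)$ and of the tangent cone in $p$ it suffices to verify $\ell_p(R_\la(p)) = 0$ at smooth $p$ for every $\la \in X^*$. For such a smooth $p$, the linear functional $\la \mapsto \ell_p(R_\la(p))$ on $X^*$ vanishes on the hyperplane $\{\la \co \la(p) = 0\}$ by \eqref{e:tangency assumption}, so it is a scalar multiple of evaluation at $p$:
\begin{equation*}
\ell_p(R_\la(p)) = c(p)\,\la(p), \qquad \la \in X^*,
\end{equation*}
for a unique continuous $1$-homogeneous scalar $c(p)$. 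The proposition therefore reduces to showing $c \equiv 0$ on the smooth locus of $\pd K$.

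For $k = 3$, this is essentially the content of \cite{IMN23}*{Proposition~2.5}: the three-dimensional family of defect identities, combined with the trace-free condition $\tr R_\la = 0$, forces $c \equiv 0$, and the argument of loc.\ cit.\ applies here without change.

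For $k = 2$ I plan the following additional step. Fix a basis $(e_1, e_2)$ of $X$ with dual basis $(e_1^*, e_2^*)$, set $A_i := R_{e_i^*} \in \sl(X)$, and write $R_\la = \la_1 A_1 + \la_2 A_2$ and $p = p_1 e_1 + p_2 e_2$. Taking $\la = e_i^*$ in the defect identity yields $\ell_p(A_i p) = c(p)\, p_i$ for $i = 1, 2$, so the $X$-valued quadratic function $W(p) := p_2 A_1 p - p_1 A_2 p$ satisfies $\ell_p(W(p)) = 0$ on the smooth locus of $\pd K$, meaning $W$ is tangent to $\pd K$ there. The main obstacle is to go from this tangency, together with $\tr A_i = 0$, to the conclusion $c \equiv 0$. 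In the model case when $K$ is an ellipse, a direct coefficient-matching computation shows that the hypothesis forces $A_1$ and $A_2$ to be skew-symmetric with respect to the defining form of $K$, and $c$ then vanishes automatically. For general $K$ I would adapt this by parametrizing $\pd K$, expanding the identities in $p$, and invoking an argument in the spirit of \cite{IMN23}*{Lemma~3.4} (which rules out nontrivial linear tangent fields on non-elliptic convex bodies) to handle the quadratic field $W$ and thereby conclude $c\equiv 0$.
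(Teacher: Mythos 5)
Your proposal follows the paper's route quite closely for both values of $k$: for $k=3$ you defer to \cite{IMN23}*{Proposition 2.5}, exactly as the paper does; for $k=2$ you define the same quadratic field $W(p) = R_{\la_p}(p)$ with $\la_p(p)=0$, observe it is tangent to $\pd K$, and appeal to \cite{IMN23}*{Lemma 3.4}. (Minor point: that lemma does cover quadratic, not only linear, tangent fields; your parenthetical gloss understates it.)

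There is, however, a genuine gap in the $k=2$ case: you do not say what to do when $W \equiv 0$ and $K$ is not an ellipse, and your ``$c \equiv 0$'' reformulation does not close it. With $\ell_p(A_i p) = c(p)\,p_i$, the identity $\ell_p(W(p)) = 0$ reads $p_2\,c(p)\,p_1 - p_1\,c(p)\,p_2 = 0$, which is a tautology; thus knowing $W = 0$ yields no constraint on $c(p)$ through this route, and the tangency of $W$ carries no information for non-elliptic $K$. The paper fills this in by a purely algebraic argument: expanding $W(p) = xy(R_{11}-R_{22}) - x^2 R_{21} + y^2 R_{12} \equiv 0$ as a polynomial identity forces $R_{12} = R_{21} = 0$ and $R_{11} = R_{22}$; combining this with $\tr R_{e_1^*} = \tr R_{e_2^*} = 0$ then gives $R \equiv 0$, after which the conclusion is trivial. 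You need this (or an equivalent) step; without it, your plan establishes the claim only when $K$ is an ellipse.
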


\begin{proof}
The case $k=3$ is covered by \cite{IMN23}*{Proposition 2.5}.
The proof for $k=2$ can be assembled
from arguments in \cite{IMN23} as follows.

Fix a basis $(e_1,e_2)$ of $X$,
and let $(e_1^*,e_2^*)$ be the dual basis of~$X^*$.
For a point $p=xe_1+ye_2\in X$ define $\la_p=ye_1^*-xe_2^*$
and observe that $p\in\ker\la_p$.
Hence by \eqref{e:tangency assumption}
the vector $W(p):=R_{\la_p}(p)$ is tangent to $\pd K$ at~$p$.
Denote $R_{ij} = R_{e_i^*}(e_j)$
and rewrite $W(p)$ using the linearity of $R$:
\begin{equation}\label{e: W in coordinates}
 W(p) = R_{\la_p}(p) = R_{ye_1^*-xe_2^*}(xe_1+ye_2)
 = xy(R_{11}-R_{22}) -x^2 R_{21} + y^2 R_{12} .
\end{equation}
Thus $W$ is a quadratic vector field on $X$ and it is tangent to $\pd K$
everywhere.
This implies (see \cite{IMN23}*{Lemma 3.4})
that $K$ is a 0-centered ellipse or $W=0$.
In the case of an ellipse the result follows from \cite{IMN23}*{Lemma 2.6},
which is independent of the dimension.

It remains to consider the case $W=0$
(cf.\ \cite{IMN23}*{Lemma 5.2}).
In this case \eqref{e: W in coordinates}
vanishes as a function of $x$ and $y$,
therefore $R_{12}=R_{21}=0$ and $R_{11}=R_{22}$.
Since $\tr R_{e_1^*}=\tr R_{e_2^*} = 0$,
we have $R_{11}^1=-R_{12}^2=0$ and $R_{22}^2=-R_{21}^1=0$
where $R_{ij}^m$, $m=1,2$, denotes the $m$th coordinate of $R_{ij}$
with respect to the basis $(e_1,e_2)$.
Now the identity $R_{11}=R_{12}$ implies that
$R_{11}^2=R_{22}^2=0$ and $R_{22}^1=R_{11}^1=0$.
Thus the tensor $R$ is zero,
hence $R_\la(p)=0$ for all $\la\in X^*$ and $p\in X$,
and the assertion of the proposition follows.
\end{proof}

Now we deduce Theorem \ref{t: local Banach} from
Propositions \ref{prop:Rexists} and \ref{prop:Rtangent};
the argument essentially repeats the one from \cite{IMN23}*{\S 2.3}.

Let $n$, $k$, $V$, $B$ and $\U$ be as in Theorem \ref{t: local Banach}.
First we assume that $n=k+1$
and apply Proposition~\ref{prop:Rexists}.
Let $X$, $R$ and $\nu$ be as in the assertion
of Proposition~\ref{prop:Rexists} and $K=B\cap X$.
Then $K$ and $R$ satisfy the assumptions of
Proposition \ref{prop:Rtangent} and we conclude that
for all $p\in\pd K$ and $\la\in X^*$,
the vector $R_\la(p)$ is tangent to $\pd K$
and hence to $\pd B$ at~$p$.
Pick $p\in\pd K$ and choose $\la\in X^*$
such that $\la(p)\ne 0$.
Now we have two vectors,
$R_\la(p) + \la(p)\nu$ from Proposition~\ref{prop:Rexists}
and $R_\la(p)$ from Proposition~\ref{prop:Rtangent},
such that they are both tangent to $\pd B$ at~$p$
and $\nu$ is their linear combination.
These properties imply that $\nu$ is also tangent to $\pd B$ at~$p$
(a detailed proof of this implication can be found in
\cite{IMN23}*{Lemma 2.3}).

Let $Y=\linspan\{\nu\}$.
The above tangency and convexity of $B$ imply that
$(p + Y) \cap \Int B = \varnothing$
for all $p \in \pd K$.
By Lemma~\ref{l: characterisations of cylindricity}
it follows that $B$ is contained in the cylinder $(B \cap X) + Y$.
Thus we have shown that almost every $X\in\U$ satisfies the
assumption of Theorem~\ref{t: local Blaschke--Kakutani}
(the ``almost every'' is inherited from
Proposition \ref{prop:Rexists}).
This assumption is a closed condition, therefore
it is satisfied for all $X\in\U$.
Now we apply Theorem~\ref{t: local Blaschke--Kakutani}
and conclude that Theorem~\ref{t: local Banach}
holds for $n=k+1$.

It remains to handle the case $n\ge k+2$.
By Lemma \ref{l: relaxed theorem conclusion} it
suffices to verify that for every $X\in\U$,
$B$ is locally cylindrical near $X$ or $B\cap X$ is
an ellipsoid.
Pick $X\in\U$.
For every $W\in\Gr_{k+1}(V,X)$,
the assumption of Theorem \ref{t: local Banach} is
satisfied for $W$ in place of $V$,
$B\cap W$ in place of $B$,
and the connected component of $\U\cap\Gr_k(W)$ containing $X$
in place of~$\U$.
If $B\cap X$ is not an ellipsoid then by
the codimension~1 case of Theorem~\ref{t: local Banach} proven above,
$B\cap W$ is locally cylindrical near $X$ for every $W\in\Gr_{k+1}(V,X)$.
By Lemma \ref{l: locally cylindrical in subspaces} this implies
that $B$ is locally cylindrical near~$X$.
This finishes the proof of Theorem~\ref{t: local Banach}.

\section*{Acknowledgements}
The authors are grateful to the anonymous referee for prompting us to write Remark~\ref{r: local reconstruction from sections}.

\section*{Funding}
This work was supported by the Russian Science Foundation under Grant 21-11-00040. The second author was supported by the Engineering and Physical Sciences Research Council [EP/S021590/1], The EPSRC Centre for Doctoral Training in Geometry and Number Theory (The London School of Geometry and Number Theory), University College London. 

\begin{bibdiv}
\begin{biblist}

\bib{AMU35}{article}{
   author={Auerbach, H.},
   author={Mazur, S.},
   author={Ulam, S.},
   title={Sur une propri\'{e}t\'{e} caract\'{e}ristique de l'ellipso\"{\i}de},
   language={French},
   journal={Monatsh. Math. Phys.},
   volume={42},
   date={1935},
   number={1},
   pages={45--48},
   issn={1812-8076},
   review={\MR{1550413}},
}

\bib{Banach32}{book}{
   author = {Banach, Stefan},
   title = {Th{\'e}orie des op{\'e}rations lin{\'e}aires},
   series = {Monografie Matematyczne},
   volume = {1},
   year = {1932},
   publisher = {PWN, Warszawa},
   language = {French},
   reprint ={
      publisher={\'{E}ditions Jacques Gabay, Sceaux},
      date={1993},
      pages={iv+128},
      isbn={2-87647-148-5},
      review={\MR{1357166}},
   },
}

\bib{BHJM21}{article}{
   author={Bor, Gil},
   author={Hern\'{a}ndez Lamoneda, Luis},
   author={Jim\'{e}nez-Desantiago, Valent\'{\i}n},
   author={Montejano, Luis},
   title={On the isometric conjecture of Banach},
   journal={Geom. Topol.},
   volume={25},
   date={2021},
   number={5},
   pages={2621--2642},
   issn={1465-3060},
   review={\MR{4310896}},
}

\bib{Calvert87}{article}{
   author={Calvert, Bruce D.},
   title={Characterizing a space which is Euclidean near a two-dimensional
   subspace},
   journal={Houston J. Math.},
   volume={13},
   date={1987},
   number={3},
   pages={319--335},
   issn={0362-1588},
   review={\MR{0916140}},
}

\bib{LTWZ}{article}{
   author={\v{C}ap, Andreas},
   author={Cowling, Michael G.},
   author={de Mari, Filippo},
   author={Eastwood, Michael},
   author={McCallum, Rupert},
   title={The Heisenberg group, ${\rm SL}(3,\mathbb R)$, and rigidity},
   conference={
      title={Harmonic analysis, group representations, automorphic forms and
      invariant theory},
   },
   book={
      series={Lect. Notes Ser. Inst. Math. Sci. Natl. Univ. Singap.},
      volume={12},
      publisher={World Sci. Publ., Hackensack, NJ},
   },
   isbn={978-981-277-078-3},
   isbn={981-277-078-X},
   date={2007},
   pages={41--52},
   review={\MR{2401809}},
}

\bib{Dvo59}{article}{
   author={Dvoretzky, Aryeh},
   title={A theorem on convex bodies and applications to Banach spaces},
   journal={Proc. Nat. Acad. Sci. U.S.A.},
   volume={45},
   date={1959},
   pages={223--226; erratum, 1554},
   issn={0027-8424},
   review={\MR{105652}},
}

\bib{Gro67}{article}{
   author={Gromov, M. L.},
   title={On a geometric hypothesis of Banach},
   language={Russian},
   journal={Izv. Akad. Nauk SSSR Ser. Mat.},
   volume={31},
   date={1967},
   pages={1105--1114},
   issn={0373-2436},
   review={\MR{0217566}},
   translation={
      journal={Mathematics of the USSR-Izvestiya},
      year={1967},
      volume={1},
      number={5},
      pages={1055--1064}
   },
}

\bib{Gruber}{book}{
   author={Gruber, Peter M.},
   title={Convex and discrete geometry},
   series={Grundlehren der mathematischen Wissenschaften [Fundamental
   Principles of Mathematical Sciences]},
   volume={336},
   publisher={Springer, Berlin},
   date={2007},
   pages={xiv+578},
   isbn={978-3-540-71132-2},
   review={\MR{2335496}},
}

\bib{I18}{article}{
   author={Ivanov, Sergei},
   title={Monochromatic Finsler surfaces and a local ellipsoid
   characterization},
   journal={Proc. Amer. Math. Soc.},
   volume={146},
   date={2018},
   number={4},
   pages={1741--1755},
   issn={0002-9939},
   review={\MR{3754357}},
}

\bib{IMN23}{article}{
   author={Ivanov, Sergei},
   author={Mamaev, Daniil},
   author={Nordskova, Anya},
   title={Banach's isometric subspace problem in dimension four},
   journal={Invent. Math.},
   volume={233},
   date={2023},
   number={3},
   pages={1393--1425},
   issn={0020-9910},
   review={\MR{4623545}},
}

\bib{Kakutani39}{article}{
   author={Kakutani, S.},
   title={Some characterizations of Euclidean space},
   journal={Jpn. J. Math.},
   volume={16},
   date={1939},
   pages={93--97},
   issn={0075-3432},
   review={\MR{895}},
}

\bib{N32}{article}{
  title={Eine Kennzeichnung homothetischer Eifl\"{a}chen},
  author={Nakajima, Soji},
  journal={Tohoku Mathematical Journal, First Series},
  volume={35},
  number={ },
  pages={285--286},
  date={1932},
}

\bib{PS21}{article}{
   author={Purnaras, Ioannis},
   author={Saroglou, Christos},
   title={Functions with isotropic sections},
   journal={Trans. Amer. Math. Soc.},
   volume={374},
   date={2021},
   number={4},
   pages={3007--3024},
   issn={0002-9947},
   review={\MR{4223041}},
}

\bib{S32}{article}{
  title={Zusammensetzung von Eik\"{o}rpen und homothetische Eifl\"{a}chen},
  author={S\"{u}ss, Wilhelm},
  journal={Tohoku Mathematical Journal, First Series},
  volume={35},
  number={ },
  pages={47--50},
  date={1932},
}

\bib{Z18}{article}{
   author={Zhang, Ning},
   title={On bodies with congruent sections or projections},
   journal={J. Differential Equations},
   volume={265},
   date={2018},
   number={5},
   pages={2064--2075},
   issn={0022-0396},
   review={\MR{3800112}},
}

\end{biblist}
\end{bibdiv}

\end{document}